\newcommand{\aninductiveinversecorrespondent}{a pre-inverse correspondent}
\newcommand{\cryptoinductiveinversecorrespondent}{inverse correspondent}
\newcommand{\inductiveinversecorrespondent}{pre-inverse correspondent}
\newcommand{\Inductiveinversecorrespondent}{Pre-inverse correspondent}
\newcommand{\Diamondblack}{\mathbin{\blacklozenge}}
\newcommand{\metaand}{\ \with \ }
\newcommand{\bigmetaor}{\bigparr}
\newcommand{\bigmetaand}{\bigwith}
\newcommand{\metaor}{\ \parr\ }
\newcommand{\metanot}{{\sim}}
\newcommand{\val}[1]{{[\![}{#1}{]\!]}}
\newcommand{\descr}[1]{{(\![}{#1}{]\!)}}
\newcommand{\atprop}{\mathsf{AtProp}}
\newcommand{\nomset}{\mathrm{NOM}}
\newcommand{\cnomset}{\mathrm{CNOM}}
\newcommand{\nomh}{\mathbf{h}}
\newcommand{\nomi}{\mathbf{i}}
\newcommand{\nomj}{\mathbf{j}}
\newcommand{\nomk}{\mathbf{k}}
\newcommand{\cnoml}{\mathbf{l}}
\newcommand{\cnomm}{\mathbf{m}}
\newcommand{\cnomn}{\mathbf{n}}
\newcommand{\cnomo}{\mathbf{o}}
\newcommand{\pureu}{\mathbf{u}}
\newcommand{\purev}{\mathbf{v}}
\newcommand{\purew}{\mathbf{w}}
\newcommand{\jty}{J^{\infty}}
\newcommand{\mty}{M^{\infty}}
\newcommand{\pdlab}{{>}\!\!\!-}
\newcommand{\starfor}{\slash_\star}
\newcommand{\starback}{\backslash_\star}
\newcommand{\circfor}{\slash_\circ}
\newcommand{\circback}{\backslash_\circ}
\newcommand{\upsE}{@}
\newcommand{\crit}[1]{\mathsf{crit}(#1)}
\newcommand{\flattify}[1]{\mathsf{Flat}(#1)}
\newcommand{\leclass}{\mathbb{LE}}
\newcommand{\langbase}{\mathcal{L}_{\mathrm{LE}}}
\newcommand{\langalba}{\langbase^+} 
\newcommand{\langineq}{\langbase^{\leq}}
\newcommand{\langmeta}{\langbase^{\mathrm{FO}}}
\newcommand{\langpolarities}{\mathcal{L}_{\mathrm{P}}^{\mathrm{FO}}}
\newcommand{\gengeomlevel}[1]{\mathrm{GA}_{#1}}
\newcommand{\addmeet}{\wedge}
\newcommand{\addjoin}{\vee}
\newcommand{\mulmeet}{\otimes}
\newcommand{\muljoin}{\oplus}
\newcommand{\adjunction}[4]{#1\negmedspace: #2\rightleftarrows #3:\negmedspace #4}
\DeclareRobustCommand 
\renewcommand{\ldots}{%
  \Compactldots
}
\theoremstyle{plain}
\newtheorem{thm}{Theorem}[section]
\newtheorem{theorem}[thm]{Theorem}
\newtheorem{corollary}[thm]{Corollary}
\newtheorem{prop}[thm]{Proposition}
\newtheorem{lemma}[thm]{Lemma}
\theoremstyle{definition}
\newtheorem{definition}[thm]{Definition}
\newtheorem{notation}[thm]{Notation}
\newtheorem{example}[thm]{Example}
\newtheorem{remark}[thm]{Remark}
\title{\bf Unified inverse correspondence for LE-logics}
\author{Alessandra Palmigiano\textsuperscript{1,2} \and Mattia Panettiere\textsuperscript{1}}
\date{\small\textsuperscript{1}School of Business and Economics, Vrije Universiteit, Amsterdam, The Netherlands \\
\textsuperscript{2}Department of Mathematics and Applied Mathematics, University of Johannesburg, South Africa}
\begin{document}

\maketitle

\begin{abstract}
We generalize Kracht's theory of internal describability from classical modal logic to the family of all logics canonically associated with varieties of normal lattice expansions (LE algebras). 
We work in the purely algebraic setting of perfect LEs; the formulas playing the role of Kracht's formulas in this generalized setting pertain to a first order language whose atoms are special inequalities between terms of perfect algebras.
Via duality, formulas in this language can be equivalently translated into first order conditions in the frame correspondence languages of several types of relational semantics for LE-logics.
\end{abstract}

\setlength{\abovedisplayskip}{1.7mm}
\setlength{\belowdisplayskip}{1.7mm}
\setlength{\abovedisplayshortskip}{1.7mm}
\setlength{\belowdisplayshortskip}{1.7mm}

\section{Introduction}
\label{sec:introduction}
The present paper continues a line of investigation initiated in \cite{CoPan22} and aimed at uniformly generalizing Kracht's theory of internal describability \cite{krachtphdthesis} from classical modal logic to large families of nonclassical logics, such as the logics canonically associated with varieties of normal lattice expansions \cite{gehrke2001bounded}, which are sometimes referred to as {\em non-distributive logics} or {\em LE-logics}. 

In \cite{krachtphdthesis}, Kracht characterized  the syntactic shape of a class of formulas in the first order language of Kripke frames, each of which is the first-order correspondent of some  Sahlqvist formula  of classical modal logic. This remarkable and very well known result, which can be understood as the `converse of Sahlqvist correspondence theory' (cf.~\cite[Section 3.7]{blackburn2002modal}),  does not have many follow-up results in the literature (with the notable exception of Kikot's \cite{kikot}, which extends Kracht's result from Sahlqvist formulas to  {\em inductive formulas} \cite{GoVa2006} of classical modal logic). In particular, to our knowledge, no generalizations exist of Kracht's theory to any specific non-classical logic language, let alone to families thereof, unlike   Sahlqvist correspondence theory, which has given rise to a rich literature, especially investing non-classical frameworks \cite{holliday-possibility,kurtonina,britzconradiemorton,conradierobinsonhybrid,vanBenthem2012-VANSCF,celanijansana_sahlqvistpositivemodallogic}. Moreover,  a shift from a model-theoretic to an algebraic perspective on the mechanism of Sahlqvist correspondence made  it possible to  define the class of Sahlqvist and inductive formulas/inequalities uniformly across  a broad range of logical languages, based on the order-theoretic properties of the algebraic interpretations of the logical connectives in each language, and to extend the algorithm SQEMA, which computes the first order correspondents of  inductive formulas of  classical modal logic \cite{CoGoVa2006}, to the algorithm ALBA \cite{CoPa:distr,CoPa:non-dist}, performing the same task as SQEMA for a broad spectrum of nonclassical languages which includes the LE-logics, 
and their expansions with fixed points \cite{CoGhPa14,conradie2017constructive}.
These results make an overarching  theory referred to as {\em unified correspondence} \cite{CoGhPa14}.
The lack of extant results generalizing Kracht's theory to nonclassical logics is likely due to the specific challenges  they present, one  of them being that non-classical logics do not typically have one single established relational semantics, but rather, many different relational semantic frameworks exist for the same logic, each of which is associated with significantly different first order languages. 

The present paper aims at uniformly characterizing the syntactic  shape of the first order correspondents of inductive inequalities in arbitrary LE-languages, 
and at introducing an algorithm for computing the LE-axiom to which each given first order condition corresponds. The uniformity we seek is relative to {\em every} type of relational semantics of  {\em every} LE-logic (see Definition \ref{def:lelogic}). Achieving such uniformity would justify the present theory to be regarded as the `converse' of unified correspondence for LE-logics \cite{CoPa:non-dist}, and hence to be referred to as {\em unified inverse correspondence} for LE-logics.

Rather than following Kikot's proof strategy, which hinges on finding the appropriate relaxation of the conditions on Kracht's formulas so as to cover the wider class of inductive formulas, 
the proof strategy of the present contribution hinges on the  syntactic characterization of the first order correspondents of a certain {\em subclass} of very simple Sahlqvist inequalities in the given LE-language expanded with the residuals of all connectives (cf.\ Definition \ref{def:Crypto:inductive}).

\paragraph{Structure of the paper.} In Section \ref{sec:starting_example}, we illustrate the procedure for computing the LE-axiom associated with a given first order condition with an example.
In Section \ref{sec:preliminaries} we recall notions about lattice expansion logics, canonical extensions of lattice expansions, ALBA's language, (refined) inductive inequalities, and geometric implications. 
In Section \ref{ssec:ALBAcorr}, we discuss an alternative presentation of the algorithm ALBA in Section \ref{ssec:albadirect}, and we use it to show that ALBA outputs are (equivalent to) generalized geometric implications in Section \ref{ssec:flattening}. 
In Section \ref{sec:inverse} we present an algorithm for inverse correspondence that targets inductive inequalities. Specifically, in Section \ref{ssec:inductiveinversecorrespondents}, we introduce an inverse correspondence procedure which transforms certain first order formulas into very simple Sahlqvist formulas in the language expanded with the residuals of all non-propositional connectives in each coordinate (i.e.~this language is the counterpart, in the context of LE-languages, of  the tense language for classical modal logic). 
Then, in Section \ref{sec:cryptoinductive}, we characterize a proper subclass of very simple Sahlqvist formulas in the expanded language each of which is equivalent to an inductive inequalities in the original LE-language. 
In Section \ref{sec:instances}, we show how the general algebraic definitions can be instantiated to a few semantic contexts, and we provide some additional examples in Section \ref{sec:moreexamples}, including an example that deals with heterogeneous (many-sorted) lattice expansions. We conclude in Section \ref{sec:conclusions}.

\section{Informal illustration of the `inverse correspondence' procedure}
\label{sec:starting_example}

The strategy enabling a uniform approach to correspondence for arbitrary languages hinges on a {\em two-step process} in which (a) the elimination of the monadic second order quantification is effected in an algebraic environment, thanks to the order-theoretic properties of the algebraic interpretation of the logical connectives; after this elimination, (b) conditions in this algebraic environment are translated into the first order correspondents in any frame correspondence language via discrete duality (cf.\ \cite[Section 2.2]{CoPa:non-dist}).

The strategy adopted in this paper to define a unified inverse correspondence algorithm consists in taking the converse route to the steps ($\mathrm{a}$) and ($\mathrm{b}$) described above: firstly, $(\mathrm{b}')$ a condition in the first order language of a certain relational structure is equivalently rewritten in the extended language of ALBA $\langalba$, which is an algebraic language extending a given LE-language $\langbase$ and is naturally interpreted in the complex of algebras of the given relational structures (see Definition \ref{def:albametalang}). Then, ($\mathrm{a}'$) the translated condition in $\langalba$ is processed and is equivalently rewritten as an $\langbase$-axiom. In particular, step ($\mathrm{a}'_1$) can be subdivided in two sub-steps. Firstly, ($\mathrm{a}'_1$) the translated condition in $\langalba$ is equivalently transformed into a very simple Sahlqvist $\langbase^\ast$-inequality of a suitable syntactic shape (cf.\ Definition \ref{def:Crypto:inductive}), where $\langbase^\ast$ is the extension of $\langbase$ with the residuals of all connectives in each coordinate. Secondly, ($\mathrm{a}'_2$) this $\langbase^\ast$-inequality is equivalently transformed into an inductive $\langbase$-inequality.

Since, as discussed above, the frame correspondence languages associated with various semantics can be significantly different, the suitable syntactic shape is not characterized in terms of any of these languages, but is rather characterized in $\langalba$. Hence, the generalization of Kracht formulas is defined in $\langalba$, and it can be extended to the language of any specific semantics by characterizing the formulas in that language which equivalently translate to this shape $\langalba$.

In what follows, we  illustrate these two steps with examples. For the sake of simplicity, we set the LE-language $\mathcal{L}_{\mathrm{LE}}$ to be the $\{\backslash\}$-fragment of the language of the basic full Lambek calculus \cite{GaJiKoOn07}, and consider the well known  semantics of $\mathcal{L}_{\mathrm{LE}}$ given by the class of magmas $\mathbb{F} = (W, \cdot)$, i.e., sets endowed with a binary operation. As is well known, any such $\mathbb{F}$ gives rise to the complex $\mathcal{L}_{\mathrm{LE}}^\ast$-algebra $\mathbb{F}^+ = (\mathcal{P}(W), \circ, \backslash, \slash)$, where $\mathcal{L}_{\mathrm{LE}}^\ast$ denotes the expansion of $\mathcal{L}_{\mathrm{LE}}$ with the residuals of $\backslash$ in each coordinate, $\mathcal{P}(W)$ denotes the powerset of $W$, and for all $A, B \subseteq W$, 
%
\[A \circ B = \{ a \cdot b : a \in A, b \in B\},\quad A\backslash B=\{u\mid \forall a(a\in A\Rightarrow a\cdot u\in B)\},\quad A/B=\{u\mid \forall b( b\in B\Rightarrow u\cdot b\in A)\}.\]
Consider the  following equation  holding on a magma $\mathbb{F} = (W, \cdot)$: 
\begin{equation}
\label{eqn:informal_discussion:startingeq}
w \cdot (x \cdot z) = (w \cdot x) \cdot (w \cdot z).
\end{equation}  
Since, in this context, $\mathbb{F}$ plays the role  of a Kripke frame, it makes sense to equivalently rewrite the equation above  as the following condition in the frame correspondence language of the class of magmas:
\begin{equation}
\label{eqn:informal_discussion:firstordermagma}
    \forall x,y,z,w,t \;\left(
y = x \cdot z\metaand t = w \cdot y \Rightarrow \exists w_1\exists w_2 ( t = w_1 \cdot w_2 \metaand w_1 = w \cdot x \metaand w_2 = w \cdot z )
\right)
\end{equation}
The condition above can be translated as a condition in the (extended) language of the {\em perfect} residuated algebra $\mathbb{F}^+$ as follows: For any variable $u$ ranging over $W$, let  $\nomj_u\coloneqq \{u\}$ and $\cnomm_u\coloneqq W \setminus \{u\}$. Hence, for any $x, y, z\in W$,
\begin{equation}    \label{eqn:informal_discussion:useful_equivalences}
x = y\cdot z 
\quad \text{ iff } \quad
\nomj_x \leq \nomj_y \circ \nomj_z 
\quad \text{ iff } \quad
\nomj_y \circ \nomj_z \nleq \cnomm_x   
\quad \text{ iff } \quad
\nomj_z \nleq \nomj_y \backslash \cnomm_x   
\quad \text{ iff } \quad
\nomj_y \backslash \cnomm_x \leq \cnomm_z,
\end{equation}
and analogous chains of equivalences can be obtained by residuating on the first coordinate of $\circ$. Therefore, \eqref{eqn:informal_discussion:firstordermagma} can  equivalently be rewritten as follows (thus completing step (b') described above):
\[
\forall x,y,z,w,t \;\left(
\nomj_x \backslash \cnomm_y \leq \cnomm_z
\metaand
\nomj_w \backslash \cnomm_t \leq \cnomm_y
\Rightarrow
\exists w_1\exists w_2 (
\nomj_t \leq \nomj_{w_1} \circ \nomj_{w_2}
\metaand
\nomj_{w_1} \leq \nomj_{w} \circ \nomj_x
\metaand
\nomj_{w_2} \leq \nomj_w \circ \nomj_z
)
\right),
\]
which is readily shown to be equivalent to
\begin{equation}
\label{eqn:informal_discussion:firstordermagma_dle_compact}
\forall x,z,w,t \;\left(
\nomj_x \backslash (\nomj_w \backslash \cnomm_t) \leq \cnomm_z
\Rightarrow
\nomj_t \leq (\nomj_{w} \circ \nomj_x)\circ(\nomj_w \circ \nomj_z)
\right),
\end{equation}
using the fact that atoms (resp.~coatoms) in $\mathbb{F}^+$ are completely join-prime (resp.\ meet-prime) and join-dense (resp.\ meet-dense) in $\mathbb{F}^+$, and moreover $\backslash$ is completely meet-preserving in its second coordinate, and $\circ$ is completely join-preserving in both  coordinates.\footnote{For instance, let us show how to rewrite $\nomj_x \backslash (\nomj_w \backslash \cnomm_t) \leq \cnomm_z$. Since coatoms are meet-dense in $\mathbb{F}^+$, $\nomj_x \backslash (\nomj_w \backslash \cnomm_t) \leq \cnomm_z$ holds iff $\nomj_x \backslash \bigwedge \{ \cnomm_y : (\nomj_w \backslash \cnomm_t) \leq \cnomm_y \} \leq \cnomm_z$, which, by the fact that $\backslash$ is completely meet preserving in its second coordinate, is equivalent to $\bigwedge \{\nomj_x \backslash \cnomm_y : (\nomj_w \backslash \cnomm_t) \leq \cnomm_y \} \leq \cnomm_z$. Since $\cnomm_z$ is completely meet prime, the inequality is equivalent to $\exists y((\nomj_w \backslash \cnomm_t) \leq \cnomm_y \metaand \nomj_x \backslash \cnomm_y \leq \cnomm_y)$.}
Consider now the following condition, which is  obtained by taking the 
contrapositive of \eqref{eqn:informal_discussion:firstordermagma_dle_compact} and then applying the equivalences in \eqref{eqn:informal_discussion:useful_equivalences}: 
\[
\forall x,z,w,t \;\left(
(\nomj_{w} \circ \nomj_x)\circ(\nomj_w \circ \nomj_z) \leq \cnomm_t
\Rightarrow
\nomj_z \leq \nomj_x \backslash (\nomj_w \backslash \cnomm_t)
\right).
\]
Using the properties mentioned above, one readily shows that the condition above is equivalent to the following inequality, in which the variable $\cnomm_t$ has been eliminated:
\begin{equation}
\label{eqn:informal_discussion:alba_output}
\forall x,z,w \;\;\;
\nomj_z \leq \nomj_x \backslash (\nomj_w \backslash ((\nomj_{w} \circ \nomj_x)\circ(\nomj_w \circ \nomj_z))).
\end{equation}
This inequality can be recognized as the ALBA-output of an inequality pertaining to a specific subclass of Sahlqvist $\mathcal{L}_{\mathrm{LE}}^\ast$-inequalities (the {\em scattered very simple Sahlqvist $\mathcal{L}_{\mathrm{LE}}^\ast$-inequalities}, cf.\ Definition \ref{def:sahlqvist_and_vss}). By the general theory (cf.~Lemma \ref{lemma:vss_substitute_nomcnom_var}), the ALBA output of any scattered very simple 
Sahlqvist inequality 
can be obtained by replacing every propositional variable 
by a nominal or a conominal variable. Hence, condition \eqref{eqn:informal_discussion:alba_output} is equivalent to the following scattered very simple Sahlqvist $\mathcal{L}_{\mathrm{LE}}^\ast$-inequality holding in $\mathbb{F}^+$:
\begin{equation}
\label{eqn:informal_discussion:lstarvss}
\forall p,q,r \;\;\;
p \leq q \backslash (r \backslash ((r \circ q)\circ(r \circ p))),
\end{equation}
Using ALBA-rules, \eqref{eqn:informal_discussion:lstarvss} can be further processed  so as to equivalently rewrite it as a proper inductive $\langbase$-inequality. 
\smallskip

{{\centering
\begin{tabular}{rrl}
&$
\forall p,q,r$&$
\left(
p \leq q \backslash (r \backslash ((r \circ q)\circ(r \circ p)))
\right)
$\\
iff&$
\forall p,q,r,a$&$
\left(
r \leq a
\Rightarrow
p \leq q \backslash (r \backslash ((a \circ q)\circ(a \circ p)))
\right)
$\\
iff &$
\forall p,q,r,a,b$&$
\left(
r \leq a
\metaand
a \circ q \leq b
\Rightarrow
p \leq q \backslash (r \backslash (b\circ(a \circ p)))
\right)
$\\
iff &$
\forall p,q,r,a,b,c$&$
\left(
r \leq a
\metaand
a \circ q \leq b
\metaand
b\circ(a \circ p) \leq c
\Rightarrow
p \leq q \backslash (r \backslash c)
\right)
$ \\ 
iff &$
\forall p,q,r,a,b,c$&$
\left(
r \leq a
\metaand
q \leq a \backslash b
\metaand
p \leq a \backslash (b\backslash c)
\Rightarrow
p \leq q \backslash (r \backslash c)
\right)
$ \\ 
iff &$
\forall p,q,a,b,c$&$
\left(
q \leq a \backslash b
\metaand
p \leq a \backslash (b\backslash c)
\Rightarrow
p \leq q \backslash (a \backslash c)
\right)
$ \\
iff &$
\forall p,a,b,c$&$
\left(
p \leq a \backslash (b\backslash c)
\Rightarrow
p \leq (a \backslash b) \backslash (a \backslash c)
\right)
$ \\
iff &$
\forall a,b,c$&$
a \backslash (b\backslash c) \leq (a \backslash b) \backslash (a \backslash c),
$ 
\end{tabular}
\par}}
\smallskip
\noindent which completes step (a') described above. Notice that the $\langbase$-inequality $a \backslash (b\backslash c) \leq (a \backslash b) \backslash (a \backslash c)$ is a proper inductive inequality which is not Sahlqvist for any order-type.

The syntactic shape of \eqref{eqn:informal_discussion:lstarvss} is further characterized in Definition \ref{def:Crypto:inductive}, as a class of very simple Sahlqvist $\langbase^\ast$-inequalities which can be equivalently rewritten, via ALBA-rules \cite[Section 4]{CoPa:non-dist}, as $\langbase$-inductive inequalities.



\section{Preliminaries}
\label{sec:preliminaries}

The present section collects definitions and basic facts  introduced in  \cite{CoPa:non-dist,inductivepolynomial,simpson1994,negri2016sysrul,johnstone:elephant}. 

\subsection{Lattice expansion logics}
\label{ssec:LElogics}
The framework of (normal) {\em LE-logics} (those logics canonically associated with varieties of normal  general---i.e.~not necessarily distributive---lattice expansions \cite{gehrke2001bounded, CoPa:non-dist}, and sometimes referred to as {\em non-distributive logics}) has been introduced as an overarching environment for developing duality, correspondence, and canonicity results for a wide class of logical systems covering the best known and most widely applied  nonclassical logics in connection with one another (cf.~Example \ref{example:LElangs} below); however, the resulting framework turned out to be very important also for studying the proof theory of these logics \cite{greco2018unified,chen2021syntactic}, precisely thanks to the systematic connections between proof-theoretic properties and correspondence-theoretic properties (more about this in the conclusions). The exposition in the present subsection is based on \cite[Section 1]{CoPa:non-dist},  \cite[Section 3]{conradie2020non},
\cite[Sections 2 and 3]{greco2018algebraic}, to which we refer the interested reader for more detail. 

\begin{definition}[Order type]
\label{def:ordertype}
An {\em order-type} over $n\in \mathbb{N}$ is an $n$-tuple $\varepsilon\in \{1, \partial\}^n$. For every order-type $\varepsilon$, we denote its {\em opposite} order-type by $\varepsilon^\partial$, that is, $\varepsilon^\partial_i = 1$ iff $\varepsilon_i=\partial$ for every $1 \leq i \leq n$. For any lattice $\mathbb{A}$, we let $\mathbb{A}^1: = \mathbb{A}$ and $\mathbb{A}^\partial$ be the dual lattice, that is, the lattice associated with the converse partial order of $\mathbb{A}$. For any order-type $\varepsilon$ over $n$, we let $\mathbb{A}^\varepsilon: = \Pi_{i = 1}^n \mathbb{A}^{\varepsilon_i}$. For a given $s \in \{1, \partial\}$, we identify $-s$ with $1$ if $s=\partial$, and with $\partial$ otherwise.
\end{definition}

\begin{definition}[LE-language]
\label{def:lelang}
The language $\mathcal{L}_\mathrm{LE}(\mathcal{F}, \mathcal{G})$ (sometimes abbreviated as $\mathcal{L}_\mathrm{LE}$) takes as parameters a denumerable set $\atprop$ of proposition letters, and two sets of connectives $\mathcal{F}$ and $\mathcal{G}$.  Each $f\in \mathcal{F}$ and $g\in \mathcal{G}$ has arity $n_f\in \mathbb{N}$ (resp.\ $n_g\in \mathbb{N}$) and is associated with some order type $\varepsilon_f$ over $n_f$ (resp.\ $\varepsilon_g$ over $n_g$).\footnote{Unary $f$ (resp.\ $g$) connectives are typically denoted  $\Diamond$ (resp.\ $\Box$) if their order type is 1, and $\lhd$ (resp.\ $\rhd$) if their order type is $\partial$.}
The terms (formulas) of $\mathcal{L}_\mathrm{LE}$ are defined recursively as follows:
\[
\phi ::= p \mid \bot \mid \top \mid (\phi \wedge \phi) \mid (\phi \vee \phi) \mid f(\overline{\phi}) \mid g(\overline{\phi})
\]
where $p \in \atprop$, $f \in \mathcal{F}$, $g \in \mathcal{G}$. 
 
We will follow the standard rules for the elimination of
parentheses. Terms in $\mathcal{L}_\mathrm{LE}$ will be denoted either by $s,t$, or by lowercase Greek letters such as $\varphi, \psi, \gamma$ etc. 
The purpose of grouping LE-connectives in the families $\mathcal{F}$ and $\mathcal{G}$ is to identify -- and refer to -- the two types of order-theoretic behaviour relevant for the development of this theory. Roughly speaking, connectives in $\mathcal{F}$  and in $\mathcal{G}$ can be thought of as the logical counterparts of generalized operators, and of generalized dual operators, respectively. We adopt the convention that unary connectives bind more strongly than the others.
\end{definition}

\begin{example}
\label{example:LElangs}
Several examples of LE languages are listed in the following table
\smallskip

{{\centering
\begin{tabular}{|ll|l|l|l|}
\hline
Logic & Abbr.& \ $\mathcal{F}$-oper. & \ $\mathcal{G}$-oper. & Refs \\
\hline
Classical modal logic & $\mathcal{L}_{\mathrm{CML}}$    & $\wedge \ \Diamond \ \neg \ \top$ & $\vee \ \rightarrow \ \neg \ \Box \ \bot$ & \\
Intuitionistic logic & $\mathcal{L}_{\mathrm{IL}}$      & $\wedge \ \top$ & $\vee \ \rightarrow \ \bot$ & \\
Bi-intuitionistic logic & $\mathcal{L}_{\mathrm{Bi}}$   & $\wedge \ \pdlab \ \top$ & $\vee \ \rightarrow \ \bot$ & \cite{rauszer1974formalization} \\
Distributive modal logic & $\mathcal{L}_{\mathrm{DML}}$ & $\wedge \ \Diamond \ \lhd \ \top$ & $\vee \ \Box \ \rhd \  \bot$ & \cite{GNV,CoPa:non-dist} \\
Positive modal logic & $\mathcal{L}_{\mathrm{PML}}$     & $\wedge \ \Diamond \ \top$ & $\vee \ \Box \ \bot$ & \cite{Dunn:Pos:ML}\\
Bi-intuitionistic modal logic \quad & $\mathcal{L}_{\mathrm{BIML}}$ & $\wedge \ \pdlab \ \Diamond \ \top$ & $\vee \ \rightarrow \ \Box \ \bot$ & \cite{wolter1998CoImplication}\\
Intuitionistic modal logic & $\mathcal{L}_{\mathrm{IML}}$ & $\wedge \ \Diamond \ \top$ & $\vee \ \rightarrow \ \Box \ \bot$ & \cite{fischerservi1977modal}\\
Semi-De Morgan logic & $\mathcal{L}_{\mathrm{SDM}}$ & $\wedge \ \top$ & $\vee \ \rhd \ \bot$ & \cite{sankappanavar1987semi}\\
\hline
Orthologic & $\mathcal{L}_{\mathrm{OL}}$ & $\top$ & $\rhd \ \bot$ & \cite{Goldblatt:Ortho:74}\\
Full Lambek calculus & $\mathcal{L}_{\mathrm{FL}}$ & $\circ \ 1$ & $\backslash \ \slash $& \cite{la61,GaJiKoOn07}\\
Lambek-Grishin calculus & $\mathcal{L}_{\mathrm{LG}}$ & $\circ \ \starback \ \starfor \ 1$ & $\star \ \circback \ \circfor \ \upsE$ & \cite{Moortgat}\\
Mult.-Add.\ linear logic & $\mathcal{L}_{\mathrm{MALL}}$ & $\circ \ 1$ & $ \circback \ \upsE$ & \cite{GaJiKoOn07}\\
\hline
\end{tabular} \par}} 
\smallskip

\noindent where the order type of the connectives is specified in the following tables

{{\centering
\begin{tabular}{|c|c|}
\hline
Symbol & \quad $\varepsilon_h$ \quad \\
\hline
$\top \ \bot \ 1 \ \upsE$ & \quad () \quad \\
$\Diamond \ \Box$ & \quad (1) \quad \\
\quad $\rhd \ \lhd \ \neg$ \quad & \quad  ($\partial$)  \quad\\
\hline
\end{tabular}
\quad\quad
\begin{tabular}{|c|c|}
\hline
Symbol & \quad $\varepsilon_h$ \quad \\
\hline
$\wedge \ \circ \ \vee \ \star$ & \quad $(1, 1)$ \quad \\
$\circback \rightarrow \starback$ & \quad $(\partial, 1$) \quad \\
\quad $\circfor \pdlab \starfor$ \quad & \quad $(1, \partial)$ \quad \\
\hline
\end{tabular}
\par
}}\end{example}

\begin{definition}[Lattice expansion]
For any pair of families of connectives $\mathcal{F}$ and $\mathcal{G}$ as above, a {\em lattice expansion} is a triple $\mathbb{A} = (\mathbb{L}, \mathcal{F}^{\mathbb{A}}, \mathcal{G}^{\mathbb{A}})$ such that $\mathbb{L}$ is a bounded lattice, $\mathcal{F}^{\mathbb{A}} = \{f^{\mathbb{A}} : f \in \mathcal{F} \}$ and $\mathcal{G}^{\mathbb{A}} = \{g^{\mathbb{A}} : g \in \mathcal{G} \}$, such that every $f^\mathbb{A} \in \mathcal{F}^\mathbb{A}$ (resp.\ $g^\mathbb{A} \in \mathcal{G}^\mathbb{A}$) is a $n_f$ (resp.\ $n_g$)-ary operation on $\mathbb{A}$. A lattice expansion is {\em normal} if all the operations ${f}^\mathbb{A} \in \mathcal{F}^\mathbb{A}$ and ${g}^\mathbb{A} \in \mathcal{G}^\mathbb{A}$ are finitely join and meet preserving respectively, i.e. for any $a, b \in \mathbb{A} ^{\varepsilon_{f}}$ and $c, d \in \mathbb{A} ^{\varepsilon_{g}}$, such that $a$ and $b$ (resp.\ $c$ and $d$) differ in only one coordinate,
\[
\begin{array}{lcl}
f^\mathbb{A}(a) = 
\bot \quad \mbox{if some } a_i = \bot^{\varepsilon_f(i)} 
& \mbox{and} &
g^\mathbb{A}(c) = 
\top \quad \mbox{if some } c_i = \top^{\varepsilon_g(i)} \\
f^\mathbb{A}(a \vee^{\varepsilon_f} b) = 
f^\mathbb{A}(a) \vee f^\mathbb{A}(b) 
& \mbox{and} &
g^\mathbb{A}(c \wedge^{\varepsilon_g} d) = 
g^\mathbb{A}(c) \wedge g^\mathbb{A}(d),
\end{array}
\]
where $\bot = \bot^1 = \top^\partial$ and $\top = \top^1 = \bot^\partial$, and $\vee^{\varepsilon_h}$ and $\wedge^{\varepsilon_h}$ are the meet and the join of the lattice $\mathbb{A}^{\varepsilon_h}$ for some $h \in \mathcal{F}\cup \mathcal{G}$. In the remainder of the paper, we will also denote $\wedge^1 = \wedge = \vee^\partial$, and $\wedge^\partial = \vee = \vee^1$.
\end{definition}

In the present paper, all the lattice expansions we consider are normal; hence will often drop the adjective `normal'. The class $\leclass$ of normal lattice expansions (for a fixed LE-language) is equational, and thus forms a variety. Lattice-based logics of this kind are usually not expressive enough to build implication-like terms which encode the entailment relation. Hence, since entailment cannot be recovered from the tautologies, it is necessary to take the whole relation as definition for these logics.

\begin{definition}[LE logic]
\label{def:lelogic}
For any $\mathcal{L}_{\mathrm{LE}}(\mathcal{F},\mathcal{G})$, an $\mathcal{L}_{\mathrm{LE}}$-logic is a set of sequents $\varphi\vdash\psi$, with $\varphi,\psi\in\mathcal{L}_{\mathrm{LE}}$, containing the axioms encoding the lattice structure
\[
p\vdash p, \quad 
\bot\vdash p, \quad 
p\vdash \top, \quad
p\vdash p\vee q, \quad
q\vdash p\vee q, \quad
p\wedge q\vdash p, \quad
p\wedge q\vdash q, 
\]
and for each $f \in \mathcal{F}$ and $g\in\mathcal{G}$ the axioms encoding normality
\[
\begin{array}{lr}
f(p_1,\ldots, \bot^{\varepsilon_f(i)},\ldots,p_{n_f}) \vdash \bot,\quad\quad\quad\quad
\top\vdash g(p_1,\ldots, \top^{\varepsilon_g(i)},\ldots,p_{n_g}),\\
f(p_1,\ldots, p\vee^{\varepsilon_f(i)} q,\ldots,p_{n_f}) \vdash f(p_1,\ldots, p,\ldots,p_{n_f})\vee f(p_1,\ldots, q,\ldots,p_{n_f}),\\
g(p_1,\ldots, p,\ldots,p_{n_g})\wedge g(p_1,\ldots, q,\ldots,p_{n_g})\vdash g(p_1,\ldots, p\wedge^{\varepsilon_g(i)} q,\ldots,p_{n_g}),\\
\end{array}
\]
and is closed under the following inference rules
\[
\frac{\phi\vdash \chi\quad \chi\vdash \psi}{\phi\vdash \psi}
\quad
\frac{\phi\vdash \psi}{\phi(\chi/p)\vdash\psi(\chi/p)}
\quad
\frac{\chi\vdash\phi\quad \chi\vdash\psi}{\chi\vdash \phi\wedge\psi}
\quad
\frac{\phi\vdash\chi\quad \psi\vdash\chi}{\phi\vee\psi\vdash\chi}
\]
where $\phi(\chi/p)$ denotes uniform substitution of $\chi$ for $p$ in $\phi$,  and for each connective $f \in \mathcal{F}$ and $g \in \mathcal{G}$ (in this context, $\vdash^1$ is $\vdash$ and $\vdash^\partial$ is $\dashv$),
\[
\begin{array}{l}
     \dfrac{\phi\vdash\psi}{f(\phi_1,\ldots,\phi,\ldots,\phi_{n_f})\vdash^{\varepsilon_f(i)} f(\phi_1,\ldots,\psi,\ldots,\phi_{n_f})} \quad\quad 
     \dfrac{\phi\vdash\psi}{g(\phi_1,\ldots,\phi,\ldots,\phi_{n_g})\vdash^{\varepsilon_g(i)} g(\phi_1,\ldots,\psi,\ldots,\phi_{n_g})}
\end{array}
\]
The minimal $\mathcal{L}_{\mathrm{LE}}(\mathcal{F}, \mathcal{G})$-logic is denoted by $\mathbf{L}_\mathrm{LE}(\mathcal{F}, \mathcal{G})$, or simply by $\mathbf{L}_\mathrm{LE}$ when $\mathcal{F}$ and $\mathcal{G}$ are clear from the context.
\end{definition}

For every LE $\mathbb{A}$, $\vdash$ is interpreted as $\leq$, and $\mathbb{A}\models \varphi \vdash \psi$ iff $h(\varphi)\leq h(\psi)$ for every homomorphism $h$ over the algebra of formulas over $\atprop$ to $\mathbb{A}$. The notation $\leclass \models \varphi \vdash \psi$ denotes that $\mathbb{A} \models \varphi \vdash \psi$ for every algebra $\mathbb{A}$ in $\leclass$. Since the minimal LE logics $\mathbf{L}_\mathrm{LE}$ are self-extensional (e.g. the interderivability is a congruence relation over the algebra of formulas), a standard Lindebaum-Tarski construction shows their completeness with respect the corresponding class of lattice expansions $\leclass$, namely $\varphi \vdash \psi \in \mathbf{L}_\mathrm{LE}$ iff $\leclass \models \varphi \vdash \psi$.

\begin{definition}[Fully residuated language]
\label{def:fully_residuated_language}
To any LE language $\mathcal{L}_\mathrm{LE}(\mathcal{F}, \mathcal{G})$, is associated the corresponding {\em fully residuated language} $\mathcal{L}^*_\mathrm{LE} = \mathcal{L}_\mathrm{LE}(\mathcal(F)^*,\mathcal{G}^*)$, where $\mathcal{F}^*$ (resp.\ $\mathcal{G}^*$) is obtained by expanding $\mathcal{F}$ (resp.\ $\mathcal{G}$) with connectives $f^\sharp_1,\ldots,f^\sharp_{n_f}$ (resp.\ $g^\flat_1,\ldots,g^\flat_{n_g}$) for each $f \in \mathcal{F}$ (resp.\ $g \in \mathcal{G}$), the intended meaning of which are the right adjoints of $f$ (resp.\ left adjoints of $g$).
\end{definition}

\begin{definition}
\label{def:lelogicresiduals}
For any LE-language $\mathcal{L}_\mathrm{LE}$, {\em the basic $\mathcal{L}_\mathrm{LE}$-logic with residuals} is obtained by specializing Definition \ref{def:lelogic} to $\mathcal{L}_\mathrm{LE}$ and closing under the following rules for any $f \in \mathcal{F}$ and $g \in \mathcal{G}$,
\smallskip

{{\centering
\begin{tabular}{cc}
\AxiomC{$f(\varphi_1,\ldots,\phi,\ldots, \varphi_{n_f}) \vdash \psi$}
\doubleLine
\LeftLabel{}
\UnaryInfC{$\phi\vdash^{\varepsilon_f(i)} f^\sharp_i(\varphi_1,\ldots,\psi,\ldots,\varphi_{n_f})$}
\DisplayProof
&
\AxiomC{$\phi \vdash g(\varphi_1,\ldots,\psi,\ldots,\varphi_{n_g})$}
\doubleLine
\RightLabel{}
\UnaryInfC{$g^\flat_i(\varphi_1,\ldots, \phi,\ldots, \varphi_{n_g})\vdash^{\varepsilon_f(i)} \psi$}
\DisplayProof
\end{tabular}
\par}}

\noindent The double line in the rules means that they can be read both from top to bottom and from bottom to top. Let $\mathbf{L}_\mathrm{LE}^*$ be the the minimal $\mathcal{L}_\mathrm{LE}$ logic with residuals.
\end{definition}
\subsection{Canonical extensions of lattice expansions}
\label{ssec:lecanonicalextensions}

The J\'onsson-Tarski expansion of Stone representation theorem proves that every Boolean algebra $\mathbb{A}$ with operators canonically embeds into the complex algebra of its ultrafilter frame, which is also called the {\em canonical extension} of $\mathbb{A}$. Such a canonical extension has certain algebraic properties which allow to characterize it up to isomorphism; hence, it is possible to define canonical extensions in purely algebraic terms as follows.

\begin{definition}
Let $\mathbb{A}$ be a lattice. The canonical extension $\mathbb{A}^\delta$ of $\mathbb{A}$ is a complete lattice which contains $\mathbb{A}$ and such that:
\begin{enumerate}
\item $\mathbb{A}$ is {\em dense} in $\mathbb{A}^\delta$, i.e., every element of $\mathbb{A}^\delta$ is the meet of joins and join of meets of elements of $\mathbb{A}$.\footnote{For this reason, sometimes canonical extensions are referred to as $\Delta_1$ completions.} 
\item $\mathbb{A}$ is {\em compact} in $\mathbb{A}^\delta$, i.e., for all $S, T \subseteq \mathbb{A}$, if $\bigwedge S\leq \bigvee T$ then $\bigwedge S'\leq \bigvee T'$ for some finite $S'\subseteq S$, $T'\subseteq T$.
\end{enumerate}
\end{definition}

Given a lattice $\mathbb{A}$, its canonical extension, besides being unique up to an isomorphism fixing $\mathbb{A}$, always exists (cf.\ \cite[Propositions 2.6 and 2.7]{GH01}), and is a perfect lattice \cite[Corollary 2.10]{DGP}, i.e., $\mathbb{A}^\delta$ is both completely join-generated by the set $J^{\infty}(\mathbb{A}^\delta)$ of the completely
join-irreducible elements of $\mathbb{A}^\delta$, and completely meet-generated by the set $M^{\infty}(\mathbb{A}^\delta)$ of
the completely meet-irreducible elements of $\mathbb{A}^\delta$.\footnote{The proof of existence of the canonical extension is constructive, while the proof of perfectness requires the axiom of choice.}

The join (resp.\ meet) closure in $\mathbb{A}^\delta$ of $\mathbb{A}$ is denoted by $O(\mathbb{A}^\delta)$ (resp.\ $K(\mathbb{A}^\delta)$), and its elements are called the {\em open} (resp.\ {\em closed}) elements of $\mathbb{A}$. Since the density condition can be restated as $O(\mathbb{A}^\delta)$ being meet-dense and $K(\mathbb{A}^\delta)$ being join-dense, it immediately follows that $J^{\infty}(\mathbb{A}^{\delta}) \subseteq K(\mathbb{A}^\delta)$ and $M^{\infty}(\mathbb{A}^{\delta})\subseteq O(\mathbb{A}^\delta)$.

The canonical extension of an LE $\mathbb{A}$ will be defined as a suitable expansion of the canonical extension of the underlying lattice of $\mathbb{A}$, which also extends the operators to $\mathbb{A}^\delta$.
Since,
${(\mathbb{A}^\partial)}^\delta = {(\mathbb{A}^{\delta})}^\partial$ and ${(\mathbb{A}_1\times \mathbb{A}_2)}^\delta = \mathbb{A}_1^\delta\times \mathbb{A}_2^\delta$ (cf.\ \cite[Theorem 2.8]{DGP}),
it follows that  ${(\mathbb{A}^\partial)}^\delta$ can be  identified with ${(\mathbb{A}^{\delta})}^\partial$,  ${(\mathbb{A}^n)}^\delta$ with ${(\mathbb{A}^{\delta})}^n$, and
${(\mathbb{A}^\varepsilon)}^\delta$ with ${(\mathbb{A}^{\delta})}^\varepsilon$ for any order type $\varepsilon$. Hence,
in order to extend operations of any arity which are monotone or antitone in each coordinate from a lattice $\mathbb{A}$ to its canonical extension, it is sufficient to provide the definition of such extensions only for {\em monotone} and {\em unary} operations:
\begin{definition}
For every unary, order-preserving operation $f : \mathbb{A} \to \mathbb{A}$, the $\sigma$-{\em extension} of $f$ is defined firstly by declaring, for every $k\in K(\mathbb{A}^{\delta})$ and every $u \in \mathbb{A}^\delta$
\[
f^\sigma(k):= \bigwedge\{ f(a)\mid a\in \mathbb{A}\mbox{ and } k\leq a\}, 
\quad\quad\quad \mbox{and} \quad\quad\quad 
f^\sigma(u):= \bigvee\{ f^\sigma(k)\mid k\in K(\mathbb{A}^{\delta})\mbox{ and } k\leq u\}
\]
The $\pi$-{\em extension} of $f$ is defined firstly by declaring, for every $o\in O(\mathbb{A}^{\delta})$ and every $u\in \mathbb{A}^{\delta}$,
\[
f^\pi(o):= \bigvee\{ f(a)\mid a\in \mathbb{A}\mbox{ and } a\leq o\},
\quad\quad\quad \mbox{and} \quad\quad\quad
f^\pi(u):= \bigwedge\{ f^\pi(o)\mid o\in O(\mathbb{A}^{\delta})\mbox{ and } u\leq o\}.
\]
\end{definition}
It is easy to see that the $\sigma$- and $\pi$-extensions of $\varepsilon$-monotone maps are $\varepsilon$-monotone. More remarkably,  the $\sigma$-extension (resp.\ $\pi$-extension) of a map which sends (finite) joins or meets in the domain to (finite) joins (resp.\ meets) in the
codomain sends {\em arbitrary} joins
or meets in the domain to {\em arbitrary} joins (resp.\ meets) in the codomain.

\begin{definition}
The canonical extension of an
$\mathcal{L}_\mathrm{LE}$-algebra $\mathbb{A} = (L, \mathcal{F}^\mathbb{A}, \mathcal{G}^\mathbb{A})$ is the   $\mathcal{L}_\mathrm{LE}$-algebra
$\mathbb{A}^\delta: = (L^\delta, \mathcal{F}^{\mathbb{A}^\delta}, \mathcal{G}^{\mathbb{A}^\delta})$ such that $f^{\mathbb{A}^\delta}$ and $g^{\mathbb{A}^\delta}$ are defined as the
$\sigma$-extension of $f^{\mathbb{A}}$ and as the $\pi$-extension of $g^{\mathbb{A}}$ respectively, for all $f\in \mathcal{F}$ and $g\in \mathcal{G}$.
\end{definition}
The canonical extension of an LE $\mathbb{A}$ can be shown to be a {\em perfect} LE:
\begin{definition}
\label{def:perfect LE}
An LE $\mathbb{A} = (L, \mathcal{F}^\mathbb{A}, \mathcal{G}^\mathbb{A})$ is perfect if $L$ is a perfect lattice, and moreover the following infinitary distribution laws are satisfied for each $f\in \mathcal{F}$, $g\in \mathcal{G}$, $1\leq i\leq n_f$ and $1\leq j\leq n_g$: for every $S\subseteq L$,

{{\centering
\begin{tabular}{c  }
$f(x_1,\ldots, \bigvee^{\varepsilon_f(i)} S, \ldots, x_{n_f}) =\bigvee \{ f(x_1,\ldots, x, \ldots, x_{n_f}) \mid x\in S \}$ \\[1mm]
$g(x_1,\ldots, \bigwedge^{\varepsilon_g(i)} S, \ldots, x_{n_g}) =\bigwedge \{ g(x_1,\ldots, x, \ldots, x_{n_g}) \mid x\in S \}$ 
\end{tabular}
\par}}
\end{definition}
  
\noindent The algebraic completeness of the logics $\mathbf{L}_\mathrm{LE}$ and $\mathbf{L}_\mathrm{LE}^*$ and the embedding of LEs into their canonical extensions immediately give completeness of $\mathbf{L}_\mathrm{LE}$ and $\mathbf{L}_\mathrm{LE}^*$ w.r.t.\ the appropriate class of perfect LEs.

\subsection{Refined inductive inequalities}

We recall the definition of {\em refined inductive} and {\em Sahlqvist} $\langbase$-inequalities which were first introduced in \cite{inductivepolynomial} as a refinement of the definitions of inductive and Sahlqvist inequalities in \cite{CoPa:non-dist}.

\begin{definition}[\textbf{Signed Generation Tree}]
    \label{def: signed gen tree}
    The \emph{positive} (resp.\ \emph{negative}) {\em generation tree} of any $\mathcal{L}_\mathrm{LE}$-term $s$ is defined by labelling the root node of the generation tree of $s$ with the sign $+$ (resp.\ $-$), and then propagating the labelling on each remaining node as follows:
    \begin{itemize}
        \item For any node labelled with $ \lor$ or $\land$, assign the same sign to its children nodes.
        \item For any node labelled with $h\in \mathcal{F}\cup \mathcal{G}$ of arity $n_h\geq 1$, and for any $1\leq i\leq n_h$, assign the same (resp.\ the opposite) sign to its $i$th child node if $\varepsilon_h(i) = 1$ (resp.\ if $\varepsilon_h(i) = \partial$).
    \end{itemize}
    Nodes in signed generation trees are \emph{positive} (resp.\ \emph{negative}) if are signed $+$ (resp.\ $-$).
\end{definition}

Signed generation trees will be mostly used in the context of term inequalities $s\leq t$. In this context we will typically consider the positive generation tree $+s$ for the left-hand side and the negative one $-t$ for the right-hand side. We will also say that a term-inequality $s\leq t$ is \emph{uniform} in a given variable $p$ if all occurrences of $p$ in both $+s$ and $-t$ have the same sign, and that $s\leq t$ is $\varepsilon$-\emph{uniform} in a (sub)array $\overline{p}$ of its variables if $s\leq t$ is uniform in $p$, occurring with the sign indicated by $\varepsilon$, for every $p$ in $\overline{p}$.

For any term $s(p_1,\ldots p_n)$, any order type $\varepsilon$ over $n$, and any $1 \leq i \leq n$, an \emph{$\varepsilon$-critical node} in a signed generation tree of $s$ is a leaf node $+p_i$ with $\varepsilon_i = 1$ or $-p_i$ with $\varepsilon_i = \partial$. An $\varepsilon$-{\em critical branch} in the tree is a branch from an $\varepsilon$-critical node.

\begin{notation}
Given a set of variables $V = \{p_1,\ldots,p_n\}$ and an order type $\varepsilon$ over $n$, we often abuse notation and treat $\varepsilon$ as a function from $V$ to $\{1, \partial\}$, and so, for instance, we write $\varepsilon(p_i) = \partial$ to mean $\varepsilon_i = \partial$.
\end{notation}

\begin{definition}
    \label{def:good:branch}
    A branch in a signed generation tree $\ast s$, with $\ast \in \{+, - \}$, is a \emph{refined good branch} if it is the concatenation of three (possibly empty) paths $P_1$, $P_2$, and $P_3$, such that $P_1$ is a path from the leaf consisting (apart from variable nodes) only of $+g$ and $-f$ nodes, $P_2$ consists (apart from variable nodes) only of $+\wedge$ and $-\vee$ nodes, and $P_3$ consists (apart from variable nodes) only of $+f$ and $-g$ nodes. Non unary nodes in signed generation trees labelled with $+g$ or $-f$ are called \emph{SRR (Syntactically Right Residual)} nodes.
    In general, $+f$, $-g$, $+\vee$, and $-\wedge$ are referred to as {\em skeleton} nodes, while $-f$, $+g$, $+\wedge$, and $-\vee$ nodes are referred to as {\em PIA} nodes. A skeleton (resp.\ PIA) node which is not $+\vee$ or $-\wedge$ (resp.\ $+\wedge$ or $-\vee$) is a {\em definite} skeleton (resp.\ PIA) node; hence in a refined good branch $P_1$ consists of definite PIA nodes, $P_2$ of $+\wedge$ and $-\vee$ nodes, and $P_3$ of definite skeleton nodes.
\end{definition}

\begin{definition}[Refined inductive inequalities \cite{inductivepolynomial}]
    \label{Inducive:Ineq:Def}
    For any order type $\varepsilon$ and any irreflexive and transitive relation $<_\Omega$ on $p_1,\ldots p_n$, the signed generation tree $*s$ $(* \in \{-, + \})$ of a term $s(p_1,\ldots p_n)$ is \emph{refined $(\Omega, \varepsilon)$-inductive} if
    \begin{enumerate}
        \item every $\varepsilon$-critical branch is refined good (cf.\ Definition \ref{def:good:branch});
        \item every SRR node lying on  some $\varepsilon$-critical branch is ancestor of exactly one $\varepsilon$-critical leaf with variable $p$ (so it is contained in exactly one $\varepsilon$-critical branch), and all the other variables $q \neq p$ under its scope are such that $q <_\Omega p$. In what follows, we will sometimes refer to SRR nodes lying on   $\varepsilon$-critical branches as {\em SRR critical nodes}, and to branches which are not refined good as {\em bad branches}.
    \end{enumerate}
    
    We refer to $<_{\Omega}$ as the \emph{dependency order} on the variables. An inequality $s \leq t$ is \emph{refined $(\Omega, \varepsilon)$-inductive} if the signed generation trees $+s$ and $-t$ are refined $(\Omega, \varepsilon)$-inductive. An inequality $s \leq t$ is \emph{refined inductive} if it is refined $(\Omega, \varepsilon)$-inductive for some $\Omega$ and $\varepsilon$, and no variable occurs uniformly in it.
\end{definition}

\begin{remark}
\label{remark:refinedinductive}
In \cite{CoPa:non-dist}, the original definition of {\em inductive LE-inequality} is slightly more general than the one above. However, each inductive inequality is equivalent to a conjunction of refined inductive inequalities (as in Definition \ref{Inducive:Ineq:Def}). Indeed, every refined inductive inequality is inductive. Moreover, by substituting all positive (resp.\ negative) uniform variables with $\top$ (resp.\ $\bot$), and by exhaustively distributing every connective over $\wedge$ and $\vee$, any $(\Omega, \varepsilon)$-inductive inequality can be rewritten as an inequality $s_1 \vee \cdots \vee s_n \leq t_1 \wedge \cdots \wedge t_m$, where the signed generation trees of each $s_i$ and $t_j$ are refined $(\Omega, \varepsilon)$-inductive. The latter inequality is equivalent to the conjunction of all the inequalities $s_i \leq t_j$ for $i \in \{1, \ldots, n \}$ and $j \in \{1, \ldots, m \}$.
\end{remark}

\begin{notation}\label{notation: placeholder_variables}
We will often need to use {\em placeholder variables} to e.g.~specify the occurrence of a subformula within a given formula. In these cases, we will write e.g.~$\varphi(!z)$ (resp.~$\varphi(!\overline{z})$) to indicate that the variable $z$ (resp.~each variable $z$ in  vector $\overline{z}$) occurs exactly once in $\varphi$. Accordingly, we will write $\varphi[\gamma / !z]$  (resp.~$\varphi[\overline{\gamma}/!\overline{z}]$   to indicate the formula obtained from $\varphi$ by substituting $\gamma$ (resp.~each formula $\gamma$ in $\overline{\gamma}$) for the unique occurrence of (its corresponding variable) $z$ in $\varphi$. Also, in what follows, we will find it sometimes useful to group placeholder variables together according to certain assumptions we make about them. So, for instance, we will sometimes write e.g.~$\varphi(!\overline{x}, !\overline{y})$ to indicate that the variables in $x$ are positive in $\varphi$, and the variables in $y$ are negative in $\varphi$, or we will write e.g.~$f(!\overline{x}, !\overline{y})$ to indicate that $f$ is monotone (resp.~antitone) in the coordinates corresponding to every variable $x$ in  $\overline{x}$ (resp.~$y$ in  $\overline{y}$).  We will provide further explanations as to the intended meaning of these groupings whenever required. Finally, we will also extend these conventions to inequalities or sequents, and thus write e.g.~$(\phi\leq \psi) [\overline{\gamma}/!\overline{z}, \overline{\delta}/!\overline{w}] $ to indicate  the inequality obtained from $\varphi\leq \psi$ by substituting each formula $\gamma$ in $\overline{\gamma}$ (resp.~$\delta$ in $\overline{\delta}$) for the unique occurrence of its corresponding variable $z$ (resp.~$w$) in $\varphi\leq \psi$. 
Finally, we will extend this vectorial notation for substitution to the meta-language defined in Definition \ref{def:albametalang}: thus, we will write e.g.\ $(\varphi_1\leq\psi_1 \metaand \varphi_2 \leq \psi_2)[\overline{\chi}/!\overline{x}]$ for $(\varphi_1\leq\psi_1)[\overline{\chi}/!\overline{x}] \metaand (\varphi_2\leq\psi_2)[\overline{\chi}/!\overline{x}]$.
\end{notation}

\begin{notation}
\label{notation:compactinductive}
In what follows, we refer to  a formula $\chi$ such that $+\chi$ (resp.\ $-\chi$)  consists only of skeleton nodes as a \emph{positive} (resp.\ \emph{negative}) \emph{skeleton}; and we dub formulas $\zeta$ as positive (resp.\ negative) (definite) PIA if there is a path from a leaf to the root of $+\zeta$ (resp.\ $-\zeta$) consisting only of (definite) PIA nodes.\footnote{In the context of analytic inductive inequalities \cite{chen2021syntactic}, definite positive (resp.\ negative) PIA formulas coincide with definite negative (resp.\ positive) skeleton formulas.}

Taking inspiration from \cite{chen2021syntactic}, we will often write refined $(\Omega, \varepsilon)$-inductive inequalities as follows: 
\begin{equation}
\label{eqn:Inductive:Compact}
(\varphi\leq \psi)[\overline{\eta_a}/!\overline{a}, \overline{\eta_b} / !\overline{b}][\overline{\alpha}/!\overline{x}, \overline{\beta}/!\overline{y},\overline{\gamma}/!\overline{z}, \overline{\delta}/!\overline{w}],
\end{equation}
where $(\varphi\leq \psi)[!\overline a, !\overline b]$ contains only definite skeleton nodes, is positive (resp.\ negative) in $!\overline a$ (resp.\ $!\overline b$), and is \emph{scattered}, i.e.\ each variable occurs at most once in each polarity; 
each $\eta_a$ in $\overline{\eta_a}$ (resp.\ $\eta_b$ in $\overline{\eta_b}$) is a finite meet (resp.\ join) containing variables in $\overline x$ and $\overline z$ (resp.\ $\overline y$ and $\overline w$); $(\varphi\leq \psi)[\overline{\eta_a}/!\overline{a}, \overline{\eta_b} / !\overline{b}]$ contains every variable in $\overline x$, $\overline y$, $\overline z$, and $\overline w$ exactly once;
each $\alpha$ in $\overline\alpha$ (resp.\ $\beta$ in $\overline\beta$) is a positive (resp.\ negative) definite PIA, and for each variable $v$ in $\overline x$ (resp.\ $\overline y$) there is a formula in $\overline\alpha$ (resp.\ $\overline\beta$) where $v$ is $\varepsilon$-critical. The formulas $\varphi$ and $\psi$ are {\em the skeleton} of the inequality. The length of $\overline a$, $\overline b$, $\overline x$, $\overline y$, $\overline z$, and $\overline w$ is often denoted by $n_a$, $n_b$, $n_x$, $n_y$, $n_z$, and $n_w$, respectively.
\end{notation}

\begin{definition}
\label{def:sahlqvist_and_vss}
A refined inductive inequality $(\varphi\leq \psi)[\overline{\eta_a}/!\overline{a}, \overline{\eta_b} / !\overline{b}][\overline{\alpha}/!\overline{x}, \overline{\beta}/!\overline{y},\overline{\gamma}/!\overline{z}, \overline{\delta}/!\overline{w}]$ is {\em Sahlqvist} if the formulas in $\overline\alpha$ and $\overline\beta$ do not contain any PIA connective in a critical branch whose arity is greater than one (i.e., an SRR node). The inequality is {\em very simple Sahlqvist} if each formula in $\overline\alpha$, $\overline\beta$, $\overline{\eta_a}$, and $\overline{\eta_b}$ is a propositional variable.
\end{definition}

We report some examples showcasing the definition of (refined) inductive inequality taken from \cite{inductivepolynomial}.
\begin{example}
\label{example:inductive}
The Frege inequality $p \rightarrow (q \rightarrow r) \leq (p\rightarrow q) \rightarrow (p \rightarrow r)$ in the language of intuitionistic logic is $(\Omega, \varepsilon)$-inductive, e.g., for $\varepsilon(p)=\varepsilon(q)=\varepsilon(r) = 1$ and $p<_\Omega q <_\Omega r$. In this language, $\rightarrow$ is an operator in $\mathcal{G}$.
\smallskip

{{\centering
\begin{forest}
[ $+\rightarrow$, draw
    [ $-p$ ]
    [ $+\rightarrow$, draw
        [$-q$]
        [$+r$, circle, draw, inner sep=1pt]
    ]
]
\end{forest}
\quad\quad
\begin{forest}
[ $-\rightarrow$, double, draw
    [ $+\rightarrow$, draw
        [$-p$]
        [$+q$, circle, draw, inner sep=1pt]
    ]
    [ $-\rightarrow$, double, draw
        [$+p$, circle, draw, inner sep=1pt]
        [$-r$]
    ]
]
\end{forest}
\par}}
\smallskip

Critical occurrences are drawn inside circles, $-f$ and $+g$ nodes are drawn inside rectangles, and $+f$ and $-g$ nodes are inside double rectangles. 
Every critical branch is refined good (indeed every branch is refined good), and the critical variable under each SRR node is greater (w.r.t.~$<_\Omega$) than the other variables occurring under the same node.
The inequality above is also $(\Omega', \varepsilon')$-inductive for $\varepsilon'(p)=\varepsilon'(q) = 1$, $\varepsilon'(r)=\partial$ and $p<_{\Omega'} q$.

The inequality $p\wedge \Box(\Diamond p\rightarrow \Box q)\leq \Diamond\Box\Box q$ in the language of distributive modal logic\footnote{In \cite{GoVa2006} it was shown that this inequality is not semantically equivalent to any Sahlqvist inequality in classical modal logic. Thus the class of inductive inequalities is not just a syntactic proper extension of the Sahlqvist class, but also a semantic one.} is inductive w.r.t.~the order-type $\varepsilon(p) = \varepsilon(q) = (1, 1)$ and $p <_\Omega q$.
\smallskip

{{\centering
\begin{forest}
[ $+\wedge$,
    [$+p$, circle, draw, inner sep=1pt]
    [$+\Box$, draw
        [$+\rightarrow$, draw
            [$-\Diamond$, double, draw
                [$-p$]
            ]
            [$+\Box$, draw
                [$+q$, circle, draw, inner sep=1pt]
            ]
        ]
    ]
]
\end{forest}
\quad\quad\quad\quad
\begin{forest}
[ $-\Diamond$, draw
    [$-\Box$, double, draw
        [$-\Box$, double, draw
            [$-q$]
        ]
    ]
]
\end{forest}
\par}}
\end{example}

\begin{example}
\label{eg:longpartial}
The $\mathcal{L}_\mathrm{DML}$-inequality $\Box(\Box q \rightarrow \Diamond ((\Box q \wedge q) \rightarrow p) ) \leq \Diamond p \vee \rhd q$ is refined $(\Omega,\varepsilon)$-inductive for $\varepsilon(p) = \partial$, $\varepsilon(q)= 1$, and any $\Omega$. Its signed generation tree is the following
\smallskip

{{\centering
\begin{forest}
l sep = 0, for children = {l*= 0.6},
  for tree = {
    align = center,
    l sep = 5, for children = {l*= 0.6}, 
  },
[$+\Box$, draw
    [$+\rightarrow$, draw
        [$-\Box$, double, draw
            [$-q$]
        ]
        [$+\Diamond$, double, draw
            [$+\rightarrow$, draw
                [$-\wedge$
                    [$-\Box$, double, draw
                        [$-q$]
                    ]
                    [$-q$]
                ]
                [$+p$]
            ]
        ]
    ]
]
\end{forest}
\begin{forest}
l sep = 0, for children = {l*= 0.6},
  for tree = {
    align = center,
    l sep = 5, for children = {l*= 0.6}, 
  },
[$-\vee$
   [$-\Diamond$, draw
        [$-p$, circle, draw, inner sep=1pt]
   ]
   [$-\rhd$, double, draw
        [$+q$, circle, draw, inner sep=1pt]
   ]
]
\end{forest}
\par}}
\end{example}
\subsection{The language of ALBA}
\label{ssec:ALBAlang}

ALBA is a calculus for correspondence that has been shown to successfully compute the first order correspondents of LE-inductive inequalities (cf.\ Definition \ref{Inducive:Ineq:Def}) by exploiting the order theoretic properties of the LE setting \cite{CoPa:non-dist}. The present section introduces the first order language in which ALBA's correspondence is achieved.

ALBA manipulates inequalities and quasi-inequalities\footnote{A {\em quasi-inequality} of $\langbase$ is an expression of the form $\bigmetaand_{i = 1}^n s_i\leq t_i \Rightarrow s\leq t$, where $s_i\leq t_i$ and $s\leq t$ are $\langbase$-inequalities for each $i$.} in the expanded language $\langalba$, which is built up on the base of the lattice constants $\top, \bot$ and an enlarged set of propositional variables 
$\nomset\cup \cnomset\cup \mathsf{AtProp}$ (the variables $\nomh, \mathbf{i}, \mathbf{j}, \nomk$ in $\nomset$ are referred to as \emph{nominals}, and the variables $\cnoml, \mathbf{m}, \mathbf{n}, \cnomo$ in $\cnomset$ as \emph{conominals}), 
closing under the logical connectives of $\langbase^*$. The natural semantic environment of $\langalba$ is given by {\em perfect} $\langbase$-{\em algebras} $\mathbb{A}$, which are based on complete lattices, both completely join-generated by their completely join-irreducible elements $J^\infty(\mathbb{A})$ and completely meet-generated by their completely meet-irreducible elements $M^\infty(\mathbb{A})$, and such that every connective has residuals in each coordinate.
Nominals and conominals range over the sets of the completely join-irreducible elements and the completely meet-irreducible elements of perfect LEs, respectively. 

We will extensively write $\pureu, \purev, \purew$ to indicate generic \emph{pure variables}, i.e., variables in $\nomset\cup\cnomset$. The language of $\langalba$-inequalities is referred to as $\langineq$. An inequality in $\langineq$ whose variables are all pure is a \emph{pure inequality}.

\begin{remark}
\label{remark:perfect_actually_not_perfect}
As remarked in \cite[Section 9]{CoPa:non-dist}, the rules of ALBA for LE-logics never use the complete join-irreducibility (resp.~meet-irreducibility) of the elements of $J^\infty(\mathbb{A})$ (resp.\ $M^\infty(\mathbb{A})$), but only use the fact that $J^\infty(\mathbb{A})$ (resp.\ $M^\infty(\mathbb{A})$) join-generate (resp.\ meet-generate) a perfect algebra $\mathbb{A}$. Thus, the results of the present paper transfer straightforwardly also to semantic settings consisting of or corresponding to algebras which are not necessarily perfect, but are complete (in the sense that their underlying lattice is complete, and  all their operations are residuated in each coordinate) and  are join-generated (resp.\ meet-generated) by some designated sets. This is for instance the case of constructive canonical extensions, whose designated families of join-generators and meet-generators are the sets of closed and open elements, respectively. This is also the case of arbitrary polarities (i.e.~polarities which are not necessarily reduced and separated \cite{gehrke2006generalized}).
\end{remark}

\begin{definition}[ALBA language for inverse correspondence]
\label{def:albametalang}
ALBA's language for inverse correspondence $\langmeta$ is generated by the following rules:
\[
\xi ::= \nomj \nleq \cnomm \ |\ s \leq t \ |\ \xi \metaand \xi  \ |\ \xi \metaor \xi  \ |\ \metanot \xi  \ |\ \xi \Rightarrow \xi \ | \ \forall \nomj\ \xi  \ |\  \forall \cnomm\ \xi  \ |\ \exists \nomj \ \xi  \ |\ \exists \cnomm\ \xi,
\]
where $\metaand$ (resp.\ $\metaor$) denotes the meta-linguistic conjunction (resp.\ disjunction), $\Rightarrow$ (resp.\ $\metanot$) denotes the meta-linguistic implication (resp.\ negation), and $s\leq t$ is an inequality in $\langineq$ of shape $\nomj \leq \varphi[\overline\pureu]$ (resp.\ $\psi[\overline\pureu] \leq \cnomm$) where $\varphi$ (resp.\ $\psi$) is a skeleton formula (cf.\ Notation \ref{notation:compactinductive}), and the leaves of the signed generation tree $+\varphi[\overline\pureu]$ (resp.\ $-\psi[\overline\pureu]$) which are positive are labelled with nominals, while the negative ones are labelled with conominals.
\end{definition}

\begin{remark}
\label{remark:flatinequalities}
In classical and distributive modal logic, the inequalities equivalent to relational atoms are inequalities of shape as in Definition \ref{def:albametalang} which are {\em flat}, in the sense that they contain just one connective. For instance, in classical modal logic, an inequality such as $\nomj \leq \Diamond \nomi$ can be thought of as a relational atom $Rxy$, whenever $\nomj$ is interpreted as $\{x\}$ and $\nomi$ as $\{y\}$. Similarly, an atom such as $\Box\cnomm \leq \cnomn$ is interpreted as $Rxy$ whenever $\cnomm$ is interpreted as $\{y\}^c$, and $\cnomn$ as $\{x\}^c$. Allowing for arbitrary length skeleton formulas is analogous to allowing for relational atoms corresponding to compositions of relations in the classical (and distributive) modal setting. Furthermore, we will show how it is possible to break these inequalities down into simpler formulas containing only flat inequalities. We have chosen to allow for arbitrary skeleton formulas for the sake of compactness, and ease of presentation and use. In Section \ref{sssec:flatteningskeleton}, we show how the results of the paper can be refined to the case in which at most one connective is allowed in the inequalities.

Furthermore, inequalities such as $\nomj \nleq \cnomm$ provide a link between nominals and conominals which is expressible in the natural first order languages of classical and distributive modal logic. Indeed, in the classical setting one of such inequalities is interpreted as $\{x\} \nsubseteq \{y\}^c$, which is equivalent to $x=y$.
\end{remark}
\subsection{Geometric implications}
\label{ssec:geometricimpl}

In the remainder of the section, we fix a first order language $\mathcal{L}_\mathrm{FO}$ where $\&$, $\parr$, and $\Rightarrow$ denote conjunctions, disjunctions, and implications, respectively.

\begin{definition}[Geometric formula]
\label{def:geometric_formula}
A {\em geometric formula} (also known as {\em coherent formula} \cite[D 1.1.3, item (d)]{johnstone:elephant}, and {\em positive formula}) is a $\mathcal{L}_{\mathrm{FO}}$-formula built up from atoms, conjunctions, disjunctions, and existential quantifiers.\footnote{In the literature, the terms `geometric formula' and `positive formula' are sometimes used to denote the same concept, but allowing also for infinite disjunctions \cite{Wraith1979}.  Indeed, this notion originates from a characterization of the formulas which are preserved by {\em geometric morphisms}, i.e. adjoint functor pairs between topoi, such that the left adjoint preserves finite limits (and infinite co-limits). This idea is the generalization of the fact that any continuous map $f:X \to Y$ between topological spaces gives rise to an adjunction $\adjunction{f^*}{\mathsf{Sh}(Y)}{\mathsf{Sh}(X)}{f_*}$ between the toposes of sheaves of sets in these spaces, and the inverse image functor $f^*$ preserves finite limits (and infinite co-limits as it is left adjoint).}
\end{definition}

\begin{definition}[Geometric implication]
\label{def:geometric_implication}
A {\em geometric implication} is a $\mathcal{L}_{\mathrm{FO}}$-formula 
$\forall \overline x(A \Rightarrow B)$,
such that $A$ and $B$ are geometric formulas.
\end{definition}

\begin{lemma}[Section 2.3 of \cite{simpson1994}]
\label{lemma:simpson_basic_geometric_sequent}
Any $\mathcal{L}_\mathrm{FO}$-geometric implication is intuitionistically equivalent to some formula
\[
\forall \overline x 
\left( 
\bigmetaand_{i=1}^n A_i \Rightarrow 
\bigmetaor_{j=1}^m \exists \overline y_j B_j 
\right)
\]
such that the $A_i$ are atoms, and the $B_j$ are conjunctions of atoms such that the variables in $\overline y_j$ do not occur free in the antecedent.
\end{lemma}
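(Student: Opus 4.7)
The plan is to apply intuitionistically valid rewrites to normalize the formula $\forall \overline x\, (A \Rightarrow B)$ in stages, first reshaping the antecedent into a conjunction of atoms, then the consequent into a disjunction of existentially quantified conjunctions of atoms. Since $A$ and $B$ are geometric, they are built only from atoms, $\land$, $\lor$, and $\exists$, which is exactly the fragment for which the relevant equivalences are intuitionistically available.

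For the antecedent I would first alpha-rename all bound variables so they are pairwise distinct and disjoint from the free variables of $B$. I would then pull every existential quantifier occurring in $A$ to the front of the whole implication, using the intuitionistic equivalences
\[
(\exists y\, \phi) \Rightarrow \psi \;\equiv\; \forall y\, (\phi \Rightarrow \psi),\quad
\exists y\, (\phi \lor \psi) \;\equiv\; (\exists y\, \phi) \lor (\exists y\, \psi),\quad
\exists y\, (\phi \land \psi) \;\equiv\; (\exists y\, \phi) \land \psi \; (y \notin \mathrm{FV}(\psi)),
\]
so that the existentials migrate outwards through $\land$ and $\lor$, cross the implication arrow, and absorb into the outer $\forall \overline x$ prefix. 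The residual antecedent $A'$ is then a propositional combination of atoms using only $\land$ and $\lor$. I would next put $A'$ in disjunctive normal form $A' \equiv \bigmetaor_k \bigmetaand_i A_{k,i}$ and apply $(\phi \lor \psi) \Rightarrow \chi \equiv (\phi \Rightarrow \chi) \metaand (\psi \Rightarrow \chi)$ exhaustively, producing a finite conjunction of implications $\bigmetaand_k (\bigmetaand_i A_{k,i} \Rightarrow B)$ each of whose antecedent is a conjunction of atoms. Distributing the outer universal quantifier over this conjunction yields a conjunction of formulas, and the statement should be read as asserting equivalence with one such conjunct at a time (or, equivalently, allowing a top-level conjunction of basic geometric sequents).

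For the consequent I would put each $B$ into the dual normal form $\bigmetaor_j \exists \overline y_j\, B_j$ with each $B_j$ a conjunction of atoms, by pushing existentials outward through disjunctions via $\exists y\, (\phi \lor \psi) \equiv (\exists y\, \phi) \lor (\exists y\, \psi)$, absorbing them into conjunctions via $\exists y\, (\phi \land \psi) \equiv (\exists y\, \phi) \land \psi$ whenever $y$ is fresh (applying alpha-renaming first to ensure this), and finally distributing $\land$ over $\lor$ to reach a disjunction of existentially quantified conjunctions of atoms. Renaming the $\overline y_j$ apart from the free variables of the antecedent — which is possible because all these rewrites are intuitionistically valid under change of bound variable — gives exactly the stated shape.

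The main obstacle is purely bookkeeping: one has to carry out all the quantifier migrations under capture-avoiding substitution, which forces a careful global alpha-renaming before any of the equivalences above can be invoked. A secondary subtlety is that a geometric implication whose antecedent contains a nontrivial disjunction cannot be put into the stated shape as a \emph{single} sequent but only as a finite conjunction of such sequents; this is why "some formula" in the statement is naturally read up to finite meta-conjunction, a reading which matches how the normal form is used when treating geometric theories sequent-by-sequent.
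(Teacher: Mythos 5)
The paper offers no proof of this lemma: it is imported as a known fact from Section~2.3 of Simpson's thesis, so there is no in-paper argument to compare yours against; I can only assess your proposal on its own terms. Your normalization is the standard one, and every equivalence you invoke --- $(\exists y\,\phi)\Rightarrow\psi\,\equiv\,\forall y(\phi\Rightarrow\psi)$ for $y$ fresh in $\psi$, distribution of $\exists$ over $\lor$, commutation of $\exists$ with $\land$ under freshness, splitting of disjunctive antecedents via $(\phi\lor\psi)\Rightarrow\chi\,\equiv\,(\phi\Rightarrow\chi)\metaand(\psi\Rightarrow\chi)$, distributivity of $\land$ over $\lor$, and distribution of $\forall$ over conjunction --- is intuitionistically valid on the geometric fragment, so the argument goes through. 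Your closing caveat is the one substantive point, and you are right about it: a geometric implication whose antecedent contains a genuine disjunction, e.g.\ $(P\lor Q)\Rightarrow R$ with nullary atoms, is not equivalent to any \emph{single} formula of the displayed shape (no choice of conjunctive antecedent and positive consequent matches its truth table), only to a finite conjunction of such formulas; so the lemma as transcribed here is slightly loose and must be read modulo finite conjunction, which is how the normal form is stated in Simpson's and Negri's work and is harmless for its use in this paper, since Definition~\ref{def:generalized:geometric} is anyway applied sequent-by-sequent. Two small bookkeeping remarks: after you distribute $\land$ over $\lor$ in the consequent, subformulas of the form $(\exists y\,\phi)\land\chi$ reappear and the existential-extraction step must be iterated (it terminates, but say so); and the step that pads or prunes the existential prefixes $\overline y_j$ uses $\phi\equiv\exists y\,\phi$ for $y$ fresh, which is intuitionistically derivable only under the usual assumption that terms are available, worth flagging if you want the claim of intuitionistic equivalence to be airtight.
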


\begin{definition}[Generalized geometric implication \cite{negri2016sysrul}]
\label{def:generalized:geometric}
Let $\gengeomlevel{0}$ be the collection of geometric implications, and for every $n \in \mathbb{N}$, let $\gengeomlevel{n+1}$ be the collection of $\mathcal{L}_\mathrm{FO}$ formulas with shape
\[
\forall \overline x 
\left( 
\bigmetaand_{i=1}^n A_i \Rightarrow 
\bigmetaor_{j=1}^m \exists \overline y_j \bigmetaand_{k=1}^{o_j}\varphi_{j,k}^{h_{j,k}}
\right),
\]
where each $\varphi_{j,k}^{h_{j,k}}$ is a formula in $\gengeomlevel{h_{j,k}}$, with $h_{j,k} \leq n$.
\end{definition}

\begin{example}
\label{eg:geomarithmetic}
In the first order language of preorders containing a predicate symbol $\leq$, the formula
\[
\forall x \forall y (x \leq y \Rightarrow \exists z(y \leq z) \metaor \exists w(w \leq x \metaand w \leq y))
\]
is a geometric implication. The following formula is generalized geometric in $\gengeomlevel{1}$,
\[
\forall x \forall y (x \leq y \Rightarrow \exists z(y \leq z \wedge \forall t(t \leq x \Rightarrow \exists u(u \leq y))) \metaor \exists w(w \leq x \metaand w \leq y)).
\]
\end{example}

\begin{example}
\label{eg:krachtasgeometric}
In the first order language of Kripke frames containing a binary predicate $R$, Kracht formulas \cite{krachtphdthesis,blackburn2002modal} are generalized geometric implications. For instance, the Kracht formula
\[
\forall x_1\forall x_2(Rx_0x_1 \metaand Rx_1x_2 \Rightarrow \exists y_1(Rx_0y_1 \metaand (Rx_2x_1 \metaor Ry_1x_0)))
\]
is a geometric implication. The Kracht formula
\[
\forall x_1(Rx_0x_1 \Rightarrow \exists y_1 Rx_0y_1 \metaand \forall y_2(Rx_1y_2 \Rightarrow \exists y_3 (Ry_1y_3 \metaand \forall y_4(Rx_0y_4 \Rightarrow Rx_1y_4 \metaor (Ry_3x_0 \metaand Rx_0x_1)))))
\]
is a generalized geometric implication in $\gengeomlevel{2}$.
\end{example}

\begin{example}
The following $\mathcal{L}_{\mathrm{DML}}^\mathrm{FO}$-formula
\[
\forall \nomj\forall \nomi\forall \cnomm(\nomj \leq \Diamond\rhd \nomi \vee \cnomm \Rightarrow \exists\cnomn (\Box \cnomm \wedge \lhd \nomi \leq \cnomn \metaand \forall \nomh(\nomh \leq \nomi \rightarrow (\cnomm \vee \Box \cnomn))))
\]
is a generalized geometric implication in $\gengeomlevel{1}$.
\end{example}
\section{ALBA outputs as generalized geometric implications}
\label{ssec:ALBAcorr}

In the present section, we fix an arbitrary LE-language $\langbase = \langbase(\mathcal{F},\mathcal{G})$, and, throughout the section,
we will omit to specify that formulas and terms pertain to $\langbase$; moreover, 
when referring to perfect LEs, we will omit to state every time that their signature is compatible with $\langbase$. 
We revisit the algorithm ALBA for correspondence in Section \ref{ssec:albadirect}, and show that the first order correspondents of (refined) inductive inequalities are generalized geometric implications in $\langmeta$ in Section \ref{ssec:flattening}.

\subsection{ALBA correspondence}
\label{ssec:albadirect}

The algorithm ALBA for LE-logics has been introduced in \cite{CoPa:non-dist}. In the present subsection, we present the algorithm from a different perspective which is useful to show the new developments contained in the next section.

\begin{notation}
\label{notation:termfunction}
For any lattice expansion $\mathbb{A} = (L, \mathcal{F}^\mathbb{A}, \mathcal{G}^\mathbb{A})$, and any formula $\alpha(!x_1,\ldots,!x_n)$, let $\alpha^\mathbb{A}:\mathbb{A}^n \to \mathbb{A}$ the term function associated to $\alpha$ mapping any $a_1,\ldots,a_n \in \mathbb{A}$ as follows: for any $x_i$, $x_i^\mathbb{A} = a_i$; for any formulas $\alpha_1$ and $\alpha_2$, $(\alpha_1 \wedge \alpha_2)^\mathbb{A} = \alpha_1^\mathbb{A}\wedge^\mathbb{A} \alpha_2^\mathbb{A}$ and $(\alpha_1 \vee \alpha_2)^\mathbb{A} = \alpha_1^\mathbb{A}\vee^\mathbb{A} \alpha_2^\mathbb{A}$; for any $m$-ary connective $h \in \mathcal{F} \cup \mathcal{G}$ and any formulas $\alpha_1, \ldots, \alpha_m$, $h^{\mathbb{A}}(\alpha_1,\ldots,\alpha_{n}) = h^{\mathbb{A}}(\alpha_1^{\mathbb{A}},\ldots,\alpha_{n}^{\mathbb{A}})$. It is useful to note that the function $\alpha^\mathbb{A}$ is monotone (resp.\ antitone) with respect to the $i$-th coordinate whenever the leaf containing $x_i$ has sign $+$ (resp.\ $-$) in the positive signed generation tree of $\alpha$. When it is clear from the context, we abuse the notation and write $\alpha$ instead of $\alpha^\mathbb{A}$.
\end{notation}

The following lemma is a restatement of \cite[Lemma 6.2]{CoPa:non-dist}.

\begin{lemma}
\label{lemma:piaadjoint}
For any perfect LE $\mathbb{A} = (L, \mathcal{F}^\mathbb{A}, \mathcal{G}^\mathbb{A})$, and any $\langbase$-formula $\alpha(!\overline x)$, consider the positive generation tree of $\alpha$. For all $i \in \{1, \ldots, n\}$, let $s$ be the sign of the leaf containing $x_i$ in $+\alpha$, then:
\begin{itemize}
    \item[(1)] if the path from the root to the leaf containing $x_i$ contains only $+f$ and $-g$ nodes, then  
    \smallskip

    {{\centering
    $\alpha^{\mathbb{A}}(x_1,\ldots, x_{i-1}, \bigvee^{s} B, \ldots, x_n) = 
    \bigvee_{b \in B}\alpha^{\mathbb{A}}(x_1, \ldots, x_{i-1}, b, \ldots, x_n)$
    \par}}
    \smallskip
    
    \noindent for any $B \subseteq \mathbb{A}$, 
    i.e.\ $\alpha^{\mathbb{A}}$ has a right residual w.r.t.\ $x_i$.
    \item[(2)] if the path from the root to the leaf containing $x_i$ contains only $+g$ and $-f$ nodes, then 
    \smallskip

    {{\centering
    $\alpha^{\mathbb{A}}(x_1,\ldots, x_{i-1}, \bigwedge^{s} B, \ldots, x_n) = 
    \bigwedge_{b \in B}\alpha^{\mathbb{A}}(x_1, \ldots, x_{i-1}, b, \ldots, x_n)$
    \par}}
    \smallskip
    
    \noindent for any $B \subseteq \mathbb{A}$, 
    i.e.\ $\alpha^{\mathbb{A}}$ has a left residual w.r.t.\ $x_i$.
\end{itemize}
\end{lemma}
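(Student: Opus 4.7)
The plan is to prove claims (1) and (2) simultaneously by induction on the length of the path from the root of $+\alpha$ to the leaf containing $x_i$ (equivalently, on the complexity of $\alpha$). A simultaneous proof is essentially forced: when the path traverses a node whose corresponding order-type coordinate is $\partial$, recursing into the child subtree flips the polarity of every node below, so case (1) on $\alpha$ may reduce to case (2) on a subformula $\alpha_k$ and conversely. The key algebraic inputs are already collected in Definition \ref{def:perfect LE}: in any perfect LE, each $f\in\mathcal{F}$ is completely join-preserving in every coordinate (of its order-type), and each $g\in\mathcal{G}$ is completely meet-preserving, which handles the inductive composition step.

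The base case is $\alpha = x_i$, where $\alpha^{\mathbb{A}}$ is the identity on $\mathbb{A}$ and both claims hold trivially (and they hold vacuously if $\alpha = x_j$ for $j \neq i$). For the inductive step of (1), since the path contains only $+f$ and $-g$ nodes, the root of $+\alpha$ is either $+f$ or $-g$. Consider the representative subcase where the root is $+f$, so $\alpha = f(\alpha_1,\ldots,\alpha_{n_f})$ and the leaf containing $x_i$ lies in the $k$-th subtree, with $\alpha_k$ carrying sign $\varepsilon_f(k)$ in $+\alpha$. If $\varepsilon_f(k) = 1$, the remaining path (now in $+\alpha_k$) still consists of only $+f$ and $-g$ nodes, and the sign $s$ of $x_i$ is unchanged; the inductive hypothesis (1) applied to $\alpha_k$ gives $\alpha_k^{\mathbb{A}}(\ldots,\bigvee^s B,\ldots) = \bigvee_{b\in B} \alpha_k^{\mathbb{A}}(\ldots,b,\ldots)$, and composing with the complete join-preservation of $f$ in coordinate $k$ yields the desired identity for $\alpha^\mathbb{A}$. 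If instead $\varepsilon_f(k) = \partial$, then $\alpha_k$ occurs negatively in $+\alpha$, so the subpath, viewed inside $+\alpha_k$, has all its signs flipped and thus consists of only $-f$ and $+g$ nodes, while the sign of $x_i$ in $+\alpha_k$ becomes $-s$. The inductive hypothesis (2) applied to $\alpha_k$ then states $\alpha_k^{\mathbb{A}}(\ldots,\bigwedge^{-s} B,\ldots) = \bigwedge_{b\in B} \alpha_k^{\mathbb{A}}(\ldots,b,\ldots)$; using the identification $\bigwedge^{-s} = \bigvee^{s}$ (and $\bigvee = \bigwedge^\partial$), this precisely matches the way $f$ turns a meet in its $k$-th coordinate (where $\varepsilon_f(k)=\partial$) into a join in $\mathbb{A}$, so composing again gives the identity for $\alpha^\mathbb{A}$. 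The subcase in which the root is $-g$ is handled symmetrically, and clause (2) is entirely dual to clause (1).

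The only real bookkeeping issue, and the one that makes a naive induction fail, is the sign-flip when recursing through a coordinate of order-type $\partial$; this is exactly the mechanism that interchanges (1) and (2) in the hypothesis, and writing $\bigvee^{\varepsilon}$/$\bigwedge^{\varepsilon}$ uniformly (as in Definition \ref{def:ordertype}) makes the arithmetic of signs straightforward. I do not anticipate any deeper obstacle, because at every inductive step we are simply composing a map that preserves $\varepsilon$-joins (resp.\ meets) in one coordinate with the perfect-LE distribution law of the head connective in that same coordinate.
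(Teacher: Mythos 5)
Your proof is correct and follows essentially the same route as the paper's: a simultaneous induction on the length of the path, composing the inductive hypothesis on the relevant subformula with the complete distribution laws of the head connective guaranteed by Definition \ref{def:perfect LE}, with the sign flip through $\partial$-coordinates being precisely the mechanism that interchanges clauses (1) and (2). If anything, your bookkeeping of the order-type superscripts (the identification $\bigwedge^{-s} = \bigvee^{s}$) is spelled out more explicitly than in the paper's own, rather terse, argument.
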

\begin{proof}
We prove the two items by simultaneous induction on the length of the path from the root to $z$. The two base cases are readily obtained from the fact that $\mathbb{A}$ is perfect (see Definition \ref{def:perfect LE}). Now let $\alpha = h_1(\beta_1, \ldots, \beta_m)$ and $\pi = (+h_1, s_2 h_2 \ldots, s_k h_k, s x_i)$ be a path from the root to the leaf containing $z$ with $k \geq 1$, where $s_1,\ldots,s_{k+1} \in \{+,-\}$, and $h_1, \ldots, h_k \in \mathcal{F} \cup \mathcal{G}$. 

Consider the case in which $h_1 \in \mathcal{F}$. If $h_2 \in \mathcal{F}$, then $\varepsilon_j(h_1) = 1$, where $j$ is the coordinate at which the subtree rooted in $s_2h_2$ occurs in $h_1$ (i.e., $s_2 = +$). By inductive hypothesis, $\beta_j^\mathbb{A}$ has a right residual w.r.t.\ $x_i$, i.e., $\beta_j^\mathbb{A}$ preserves joins (resp.\ reverses meets) if $s=+$ (resp.\ $s=-$); hence, since $\mathbb{A}$ is perfect, also preserves joins (resp.\ reverses meets) if $s=+$ (resp.\ $s=-$).

The case in $h_1 \in \mathcal{G}$ is similarly proved.
\end{proof}

The following corollary straightforwardly follows from the lemma above, as definite skeleton formulas contain only $+f$ and $-g$ nodes (see Notation \ref{notation:compactinductive}); thus all the paths from the root to a leaf contain only $+f$ and $-g$ nodes (and hence, the positive signed generation tree of a negative skeleton formula contains only $+g$ and $-f$ nodes).

\begin{corollary}
\label{cor:skeletonadjoint}
For any positive (resp.\ negative) scattered definite skeleton formula $\varphi(!\overline x,!\overline y)$, its associated term function $\varphi^\mathbb{A}$ has right (resp.\ left) residuals with respect to each coordinate. Equivalently, $\varphi^\mathbb{A}$ preserves joins (resp.\ meets) in its monotone coordinates, and reverses meets (resp.\ joins) in its antitone coordinates.
\end{corollary}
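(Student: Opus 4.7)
The plan is to reduce the corollary directly to Lemma \ref{lemma:piaadjoint}, applied coordinate by coordinate. First I would unpack Notation \ref{notation:compactinductive}: a positive definite skeleton formula $\varphi(!\overline x, !\overline y)$ is one whose positive generation tree $+\varphi$ has non-leaf nodes consisting exclusively of $+f$ (with $f\in\mathcal{F}$) and $-g$ (with $g\in\mathcal{G}$) nodes; combined with the scattered hypothesis, the $!$-notation ensures that each variable occurs exactly once in $+\varphi$, giving a bijection between coordinates of $\varphi^{\mathbb{A}}$ and leaves of $+\varphi$. Fixing any such coordinate, the unique path in $+\varphi$ from the root to the corresponding leaf consists only of $+f$ and $-g$ nodes, so Lemma \ref{lemma:piaadjoint}(1) applies directly and yields a right residual at that coordinate.

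For the negative case, I would invoke the sign-flipping symmetry built into Definition \ref{def: signed gen tree}: every node in $-\varphi$ carries the sign opposite to the corresponding node in $+\varphi$. Hence requiring that $-\varphi$ have only $+f$ and $-g$ non-leaf nodes is equivalent to requiring that $+\varphi$ have only $-f$ and $+g$ non-leaf nodes. Each root-to-leaf path in $+\varphi$ then satisfies the hypothesis of Lemma \ref{lemma:piaadjoint}(2), which delivers a left residual at every coordinate.

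Finally, I would cash out the \emph{equivalently} clause. Since $\mathbb{A}$ is perfect and thus complete, a monotone map has a right residual at a given coordinate iff it preserves arbitrary joins there, and an antitone map has a right residual iff it sends arbitrary meets to joins; the dual statements characterize left residuals. By the signing rules of Definition \ref{def: signed gen tree}, the sign of the $z$-leaf in $+\varphi$ is $+$ precisely when $\varphi^{\mathbb{A}}$ is monotone in $z$ and $-$ precisely when it is antitone in $z$, so the conclusions of Lemma \ref{lemma:piaadjoint} match the stated preservation/reversal pattern exactly. There is no real obstacle here; the corollary is essentially a bookkeeping step, and the one subtlety worth flagging is that the \emph{scattered} hypothesis is indispensable, since a doubly-occurring variable would fall outside the single-leaf scope of Lemma \ref{lemma:piaadjoint} (and in fact coordinatewise join-preservation does not in general imply diagonal join-preservation).
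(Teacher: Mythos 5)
Your proposal is correct and follows essentially the same route as the paper: the paper derives the corollary in one sentence by observing that every root-to-leaf path in a (positive) definite skeleton formula consists only of $+f$ and $-g$ nodes (and dually for negative skeletons), and then invoking Lemma \ref{lemma:piaadjoint} coordinate by coordinate. Your additional unpacking of the ``equivalently'' clause and the role of the scattered hypothesis is accurate but not a departure from the paper's argument.
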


The result above allows to prove the following key lemma.

\begin{lemma}[First approximation, \cite{CoPa:non-dist}]
\label{lemma:firstapprox}
The following are semantically equivalent on perfect LEs for any positive definite skeleton formula $\varphi(!\overline x,!\overline y)$ and negative definite skeleton formula $\psi(!\overline z, !\overline w)$ such that the leaves containing the variables in $\overline x$ and $\overline z$ (resp.\ $\overline y$ and $\overline w$) occur positively (resp.\ negatively) in $+\varphi$ and $-\psi$,
\[
\begin{array}{rl}
1. &\forall \overline x, \overline y, \overline z, \overline w \ \ \varphi(\overline x, \overline y) \leq \psi(\overline z, \overline w);\\
2.& \forall \overline x, \overline y, \overline z, \overline w, \overline\nomj_x, \overline \cnomm_y, \overline \nomj_z, \overline \cnomm_w 
\big(
\bigmetaand_{i=1}^{n_x}\nomj_{x_i}\leq x_i \metaand
\bigmetaand_{i=1}^{n_y} y_i\leq \cnomm_{y_i} \metaand
\bigmetaand_{i=1}^{n_z} \nomj_{z_i} \leq z_i \metaand
\bigmetaand_{i=1}^{n_w} w_i \leq \cnomm_{w_i} \Rightarrow \\[1mm]
&\hfill
(\varphi \leq \psi)[\overline{\nomj_x/!x}, \overline{\cnomm_y/!y}, \overline{\nomj_z/!z},\overline{\cnomm_w/!w}]
\big),
\end{array}
\]
where $n_x$, $n_y$, $n_z$, and $n_w$ are the number of variables in $\overline x$, $\overline y$, $\overline z$, and $\overline w$, respectively.
\end{lemma}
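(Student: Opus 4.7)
The plan is to establish the two directions separately. Both rely on Corollary~\ref{cor:skeletonadjoint} (which gives the relevant complete join/meet preservation properties of term functions associated with definite skeleton formulas) together with the fact that every perfect LE is completely join-generated by $J^\infty$ and completely meet-generated by $M^\infty$. The sign discipline built into the hypothesis guarantees that $\varphi$ is monotone in $\overline x$ and antitone in $\overline y$, whereas $\psi$ is antitone in $\overline z$ and monotone in $\overline w$; these monotonicity facts drive both implications.

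For $(1)\Rightarrow(2)$, I would fix $\overline x,\overline y,\overline z,\overline w$ and pure variables $\overline{\nomj_x},\overline{\cnomm_y},\overline{\nomj_z},\overline{\cnomm_w}$ satisfying the bounds in the antecedent of (2). The monotonicity of $\varphi$ in $\overline x$ and its antitonicity in $\overline y$ give $\varphi(\overline{\nomj_x},\overline{\cnomm_y})\leq \varphi(\overline x,\overline y)$, while the antitonicity of $\psi$ in $\overline z$ and its monotonicity in $\overline w$ give $\psi(\overline z,\overline w)\leq \psi(\overline{\nomj_z},\overline{\cnomm_w})$. Splicing these with the hypothesis $\varphi(\overline x,\overline y)\leq \psi(\overline z,\overline w)$ of (1) yields the conclusion of (2). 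No order-theoretic machinery beyond plain monotonicity is needed here.

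For $(2)\Rightarrow(1)$, I would fix arbitrary $\overline x,\overline y,\overline z,\overline w$. By perfectness, every $x_i$ is the join of the nominals below it, every $y_i$ is the meet of the conominals above it, and analogously for $\overline z$ and $\overline w$. Corollary~\ref{cor:skeletonadjoint} lets us push $\varphi$ inside these joins and meets coordinate by coordinate (using join-preservation in the monotone coordinates $\overline x$ and meet-reversal in the antitone coordinates $\overline y$), giving $\varphi(\overline x,\overline y) = \bigvee\{\varphi(\overline{\nomj_x},\overline{\cnomm_y}) : \overline{\nomj_x}\leq \overline x,\ \overline y\leq \overline{\cnomm_y}\}$; dually one obtains $\psi(\overline z,\overline w) = \bigwedge\{\psi(\overline{\nomj_z},\overline{\cnomm_w}) : \overline{\nomj_z}\leq \overline z,\ \overline w\leq \overline{\cnomm_w}\}$. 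Hypothesis (2) says exactly that every element of the first set is below every element of the second, so the join is below the meet, which is (1).

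No step of this plan is genuinely delicate; the only care required is polarity bookkeeping when applying Corollary~\ref{cor:skeletonadjoint} in the antitone coordinates, since \emph{reverses meets} (for $\varphi$) and \emph{reverses joins} (for $\psi$) must be instantiated with the correct generators (conominals above, nominals below) so that the resulting join/meet expansions range over precisely the witnesses quantified in (2).
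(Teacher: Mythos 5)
Your proof is correct and follows essentially the same route as the paper's: the substantive direction $(2)\Rightarrow(1)$ is exactly the paper's argument (perfectness to expand each variable as the join of nominals below it or the meet of conominals above it, Corollary \ref{cor:skeletonadjoint} to distribute $\varphi$ and $\psi$ over these, and then the elementary fact that a join lies below a meet iff the generators compare pairwise). The paper packages both directions into a single chain of equivalences, whereas you dispatch $(1)\Rightarrow(2)$ separately by plain monotonicity; this is only a presentational difference.
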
 
\begin{proof}
Let $\mathbb{A}$ be a perfect LE. Since it is perfect, it is join generated by $J^{\infty}(\mathbb{A})$ and meet generated by $M^\infty(\mathbb{A})$; hence for every $x_i$ and every $z_i$ (resp.\ $y_i$ and $w_i$), $x_i = \bigvee \{ j_{x_i} \in J^\infty(\mathbb{A}) \mid j_{x_i} \leq x_i \}$ and $z_i = \bigvee \{ j_{z_i} \in J^\infty(\mathbb{A}) \mid j_{z_i} \leq z_i \}$ (resp.\ $\bigwedge \{ m_{y_i} \in M^\infty(\mathbb{A}) \mid y_i \leq m_{y_i} \}$ and $w_i = \bigwedge \{ m_{w_i} \in M^\infty(\mathbb{A}) \mid  w_i \leq m_{w_i} \}$. It follows that $\varphi(\overline x, \overline y) \leq \psi(\overline z, \overline w)$ is equivalent to
\[
\varphi\Big(\bigvee_{j_{x_i} \leq x_i} j_{x_i}, \bigwedge_{y_i\leq m_{y_i}} m_{y_i}\Big)
\leq
\psi\Big(\bigvee_{j_{z_i} \leq z_i} j_{z_i}, \bigwedge_{w_i\leq m_{w_i}} m_{w_i}\Big)
\]
which, by Corollary \ref{cor:skeletonadjoint}, is equivalent to
\[
\bigvee_{\substack{j_{x_i}\leq x_i\\ 
y_i\leq m_{y_i}}} \varphi(\overline{j_{x_i}}, \overline{m_{y_i}})
\leq
\bigwedge_{\substack{j_{z_i}\leq z_i \\ w_i\leq m_{w_i}}} \varphi(\overline{j_{z_i}}, \overline{m_{w_i}}).
\]
\end{proof}


\begin{remark}
\label{remark:definiteskeleton}
In the lemma above, the additional assumption of $\varphi$ and $\psi$ being definite is needed since in any LE $\mathbb{A}$, the meet (resp.\ join) is not completely meet (resp.\ join) preserving, negative (resp.\ positive) definite skeleton nodes distribute  over the meet (resp.\ join) in their positive coordinates, and reverse joins (resp.\ meets) in their negative coordinates. 
However, any inequality $\varphi \leq \psi$ such that $\varphi$ (resp.\ $\psi$) is a positive (resp.\ negative) skeleton formula is semantically equivalent in perfect LEs to a conjunction of inequalities $\varphi_i \leq \psi_i$ where each $\varphi_i$ (resp.\ $\psi_i$) is a positive (resp.\ negative) {\em definite} skeleton formula (cf.\ Lemma \ref{lemma:piaadjoint}) on which the lemma above applies.

Moreover, notice how the polarity of the variables in $\overline x$, $\overline y$, $\overline z$, and $\overline w$ is the same in both the equivalent rewritings.
\end{remark} 

We remind the reader of Notation \ref{notation:compactinductive} for denoting inductive inequalities, e.g., in the following corollary. 
\begin{corollary}
\label{cor:firstapprox_inductive}
The following are semantically equivalent on perfect LEs:
\[
\begin{array}{rl}
1. & (\varphi\leq \psi)[\overline{\eta_a}/!\overline{a}, \overline{\eta_b} / !\overline{b}][\overline{\alpha}/!\overline{x}, \overline{\beta}/!\overline{y},\overline{\gamma}/!\overline{z}, \overline{\delta}/!\overline{w}]\\
2. & \forall \overline p, \overline\nomj_x, \overline \cnomm_y, \overline \nomj_z, \overline \cnomm_w 
\big(
\bigmetaand_{i=1}^{n_x}\nomj_{x_i}\leq \alpha_i \metaand
\bigmetaand_{i=1}^{n_y} \beta_i\leq \cnomm_{y_i} \metaand
\bigmetaand_{i=1}^{n_z}\nomj_{z_i}\leq \gamma_i \metaand 
\bigmetaand_{i=1}^{n_w} \delta_i\leq \cnomm_{w_i} \Rightarrow \\[1mm]
&\hfill
(\varphi \leq \psi)[\overline{\nomj_a/!a}, \overline{\cnomm_b/!b}]
\big),
\end{array}
\]
where each $\nomj_a$ (resp.\ $\cnomm_b$) is associated to a variable in $\overline a$ (resp.\ $\overline b$), each $\nomj_x$ and $\nomj_z$ (resp. $\cnomm_y$ and $\cnomm_w$) is $\nomj_a$ (resp.\ $\cnomm_b$) such that $\eta_a$ (resp.\ $\eta_b$) is the unique formula in $\overline{\eta_a}$ (resp.\ $\overline{\eta_b}$) that contains $x$ or $z$ (resp.\ $y$ or $w$).
\end{corollary}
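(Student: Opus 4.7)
The plan is to derive this corollary as a direct consequence of the first approximation lemma (Lemma \ref{lemma:firstapprox}), together with an elementary observation about how single-nominal/conominal bounds interact with finite meets and joins.

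First, I would view the refined inductive inequality in statement (1) as an instance of the schema in Lemma \ref{lemma:firstapprox}, by taking as positively-occurring arguments of the skeleton the compound formulas $\eta_a[\overline{\alpha}/!\overline{x}, \overline{\gamma}/!\overline{z}]$ for each $a \in \overline{a}$, and as negatively-occurring ones the formulas $\eta_b[\overline{\beta}/!\overline{y}, \overline{\delta}/!\overline{w}]$ for each $b \in \overline{b}$. Since, by Notation \ref{notation:compactinductive}, the skeleton $(\varphi\leq\psi)[!\overline{a},!\overline{b}]$ is scattered, positive in $\overline{a}$ and negative in $\overline{b}$, and contains only definite skeleton nodes, Lemma \ref{lemma:firstapprox} applies (replacing $\overline{x}, \overline{z}$ there with $\overline{a}$, and $\overline{y}, \overline{w}$ there with $\overline{b}$), and yields that (1) is equivalent, over perfect LEs, to: for every choice of nominals $\overline{\nomj_a}$ and conominals $\overline{\cnomm_b}$,
\[
\bigmetaand_{a \in \overline{a}} \nomj_a \leq \eta_a[\overline{\alpha},\overline{\gamma}] \metaand \bigmetaand_{b \in \overline{b}} \eta_b[\overline{\beta},\overline{\delta}] \leq \cnomm_b \ \Rightarrow\ (\varphi \leq \psi)[\overline{\nomj_a/!a}, \overline{\cnomm_b/!b}].
\]

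Next, I would exploit that each $\eta_a$ is a finite meet of variables from $\overline{x}\cup\overline{z}$ and each $\eta_b$ is a finite join of variables from $\overline{y}\cup\overline{w}$. Hence, purely by the universal property of meets and joins (no primality is needed here), the conjunct $\nomj_a \leq \eta_a[\overline{\alpha},\overline{\gamma}]$ is equivalent to the meta-conjunction of $\nomj_a \leq \alpha_i$ over every $x_i$ appearing in $\eta_a$, together with $\nomj_a \leq \gamma_j$ over every $z_j$ appearing in $\eta_a$; dually, $\eta_b[\overline{\beta},\overline{\delta}] \leq \cnomm_b$ splits as a conjunction of inequalities $\beta_i \leq \cnomm_b$ and $\delta_j \leq \cnomm_b$. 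Renaming $\nomj_a$ to $\nomj_{x_i}$ (resp.\ $\nomj_{z_j}$) for each variable $x_i$ (resp.\ $z_j$) occurring in the unique $\eta_a$ that contains it, and analogously for the conominals, produces precisely the antecedents appearing in statement (2). Since the skeleton is scattered in $\overline{a}$ and $\overline{b}$, each PIA-variable $x_i, y_i, z_j, w_j$ belongs to exactly one $\eta_a$ or $\eta_b$, so this renaming is well-defined and reversible, delivering the required equivalence.

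No real obstacle is expected. The only step requiring minor care is the bookkeeping in the renaming phase, which is exactly where the scatteredness assumption built into Notation \ref{notation:compactinductive} is used; without it, the identification of $\nomj_a$ with a common nominal $\nomj_{x_i}$ across different PIA substitutions could not be guaranteed to be consistent with the form of the conclusion. Everything else amounts to a direct invocation of Lemma \ref{lemma:firstapprox} and trivial lattice manipulations.
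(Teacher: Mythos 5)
Your proposal is correct and follows essentially the same route as the paper: both apply Lemma \ref{lemma:firstapprox} to the scattered definite skeleton with the substituted finite meets $\eta_a$ and joins $\eta_b$ as its arguments, and then split each resulting inequality $\nomj_a \leq \eta_a$ (resp.\ $\eta_b \leq \cnomm_b$) into a conjunction of atomic inequalities using only the universal property of finite meets and joins, followed by the renaming of (co)nominals. Your explicit remark that scatteredness is what makes the renaming well-defined is a useful clarification of a point the paper leaves implicit.
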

\begin{proof}
By Lemma \ref{lemma:firstapprox}, $\varphi\leq\psi[\overline{\eta_a/!a},\overline{\eta_b/!b}][\overline{\alpha/!x},\overline{\beta/!y},\overline{\gamma/!z},\overline{\delta/!w}]$ is equivalent to
\[
\begin{array}{l}
\forall \overline p,
 \overline {\nomj_a}, \overline{\cnomm_b}
\big(
\bigmetaand_{i=1}^{n_a} (\nomj_a \leq \eta_a) [\overline{\alpha/x},\overline{\beta/y},\overline{\gamma/z},\overline{\delta/w}]
\metaand
\bigmetaand_{i=1}^{n_b} (\eta_b \leq \cnomm_b) [\overline{\alpha/x},\overline{\beta/y},\overline{\gamma/z},\overline{\delta/w}] \Rightarrow \\[1mm]
\hfill
(\varphi \leq \psi)[\overline{\eta_a/!a},\overline{\eta_b/!b}],
\big)
\end{array}
\]
since each $\eta_a$ (resp.\ $\eta_b$) is a finite meet (resp.\ join) of variables $\bigwedge V_{\eta_a}$ (resp.\ $\bigvee V_{\eta_b}$), each inequality $\nomj_a \leq \eta_a = \bigwedge V_{\eta_a}$ (resp.\ $\bigvee V_{\eta_b} = \eta_b \leq \cnomm_b$) is equivalent to $\bigmetaand_{v \in V_{\eta_a}} \nomj_a \leq v$ (resp.\ $\bigmetaand_{v \in V_{\eta_b}} v \leq \cnomm_b$).
\end{proof}

\begin{example}
\label{eg:firstapproxonK}
Given a fixed perfect LE $\mathbb{A}$, consider the inequality K in classical modal logic $\Box(p \rightarrow q) \leq \Box p \rightarrow \Box q$. On the righthand side, the skeleton is $\psi(a,b) = b \rightarrow \Box a$. Since $\mathbb{A}$ is join generated by $J^\infty(\mathbb{A})$ and meet generated by $M^\infty(\mathbb{A})$, $\Box p = \bigvee\{ j \in J^\infty(\mathbb{A}) : j \leq \Box p \}$, $q = \bigwedge\{m \in M^\infty(\mathbb{A}) : q \leq m \}$, and $\Box(p\rightarrow q) = \bigvee\{i \in J^\infty(\mathbb{A}) : i \leq \Box(p \rightarrow q)\}$. Hence, the inequality K can be equivalently rewritten as
\[
\bigvee\{i \in J^\infty(\mathbb{A}) : i \leq \Box(p \rightarrow q)\} \leq \bigvee\{ j \in J^\infty(\mathbb{A}) : j \leq \Box p \} \rightarrow \Box \bigwedge\{m \in M^\infty(\mathbb{A}) : q \leq m \}.
\]
As $\Box$ is completely meet preserving, it is equivalent to
\[
\bigvee\{i \in J^\infty(\mathbb{A}) : i \leq \Box(p \rightarrow q)\} \leq \bigvee\{ j \in J^\infty(\mathbb{A}) : j \leq \Box p \} \rightarrow \bigwedge\{\Box m : m \in M^\infty(\mathbb{A}), q \leq m \},
\]
and since $\rightarrow$ is completely join reversing with respect to the first coordinate, and completely meet preserving with respect to the second, it can be rewritten as
\[
\bigvee\{i \in J^\infty(\mathbb{A}) : i \leq \Box(p \rightarrow q)\} \leq \bigwedge\{ j \rightarrow \Box m : j \in J^\infty(\mathbb{A}), j \leq \Box p, m \in M^\infty(\mathbb{A}), q \leq m \},
\]
which holds if and only if
\[
(\forall i \in J^\infty(\mathbb{A}))(\forall j \in J^\infty(\mathbb{A}))(\forall m \in M^\infty(\mathbb{A}))(i \leq \Box(p \rightarrow q) \metaand j \leq \Box p \metaand q \leq m \Rightarrow i \leq j \rightarrow \Box m).
\]
In ALBA's language,
\[
\forall\nomi,\nomj,\cnomm(\nomi \leq \Box(p \rightarrow q) \metaand \nomj \leq \Box p \metaand q \leq \cnomm \Rightarrow \nomi \leq \nomj \rightarrow \Box \cnomm).
\]
\end{example}

Lemma \ref{lemma:piaadjoint} states some sufficient condition for the interpretation of a formula in a perfect LE to have a residual with respect to some variable. The following definition shows how to construct such a residual.

\begin{definition}[$\mathsf{LA}$ and $\mathsf{RA}$ \cite{CoPa:non-dist}]
\label{def:RA_and_LA}
For any sequence $\overline\phi = (\phi_1,\ldots,\phi_n)$, let $\overline{\phi_{-j}} = (\phi_1,\ldots, \phi_{j-1}, \phi_{j+1}, \ldots, \phi_n)$. 
Let $\phi(!x, \overline z)$ and $\psi(!x, \overline z)$ be any formulas, such that the path from the root to the leaf containing $x$ in $+\phi$ contains only $+f$ and $-g$ inner nodes, and the path from the root to the leaf containing $x$ in $+\psi$ contains only $+g$ and $-f$ inner nodes. 
Let $\mathsf{LA}(\phi)(u, \overline z)$ and $\mathsf{RA}(\psi)(u, \overline z)$ be defined by simultaneous recursion as follows:
\begin{center}
\begin{tabular}{r c l}
\multicolumn{3}{c}{$\mathsf{LA}(x)$ \ =\  $u$; \quad\quad $\mathsf{RA}(x)$ \ =\  $u$;}\\
$\mathsf{LA}(g(\overline{\phi_{-j}(\overline z)},\phi_j(x,\overline z), \overline{\psi(\overline z)}))$ &= &$\mathsf{LA}(\phi_j)(g^{\flat}_{j}(\overline{\phi_{-j}(\overline z)},u, \overline{\psi(\overline z)} ), \overline z)$;\\
$\mathsf{LA}(g(\overline{\phi(\overline z)}, \overline{\psi_{-j}(\overline z)},\psi_j(x,\overline z)))$ &= &$\mathsf{RA}(\psi_j)(g^{\flat}_{j}(\overline{\phi(\overline z)}, \overline{\psi_{-j}(\overline z)},u), \overline z)$;\\
$\mathsf{RA}(f(\overline{\psi_{-j}(\overline z)},\psi_j(x,\overline z), \overline{\phi(\overline z)}))$ &= &$\mathsf{RA}(\psi_j)(f^{\sharp}_{j}(\overline{\psi_{-j}(\overline z)},u, \overline{\phi(\overline z)} ), \overline z)$;\\
$\mathsf{RA}(f(\overline{\psi(\overline z)}, \overline{\phi_{-j}(\overline z)},\phi_j(x,\overline z)))$ &= &$\mathsf{LA}(\phi_j)(f^{\sharp}_{j}(\overline{\psi(\overline z)}, \overline{\phi_{-j}(\overline z)},u), \overline z)$.\\
\end{tabular}
\end{center}
\end{definition}

We refer the reader to Example \ref{eg:albaoutput_goranko_LE} to see how the definition above is concretely applied.

In the equivalent rewriting of an $(\Omega,\varepsilon)$-inductive inequality $(\varphi \leq \psi)[\eta_a/!a,\eta_b/!b][\alpha/!x,\beta/!y,\gamma/!z,\delta/!w]$ described in Corollary \ref{cor:firstapprox_inductive}, each formula $\alpha_i$ in $\overline\alpha$ (resp.\ $\beta_i$ in $\overline\beta$) of contains (cf.\ Definition \ref{Inducive:Ineq:Def}) a unique path of PIA nodes ($-f$ and $+g$) to a leaf labelled with $\crit{\alpha_i} := v_i$ (resp.\ $\crit{\beta_i} := v_i$) and with sign $s_i$, such that $\varepsilon(v_i) = s_i$; hence $\alpha_i$ (resp.\ $\beta_i$) has a right adjoint with respect to $v_i$ (see Lemma \ref{lemma:piaadjoint}), and, by adjunction, 
\[
\begin{array}{l}
\nomj_{x_i} \leq \alpha_i(v_i, \overline z) \quad \mbox{iff} \quad \mathsf{LA}(\alpha_i)(\nomj_{x_i}, \overline z) \leq^{s_i} v_i, \quad\quad and \quad\quad
\beta_i(v_i, \overline z) \leq \cnomm_{y_i} \quad \mbox{iff} \quad  v_i \leq^{s_i} \mathsf{RA}(\beta_i)(\cnomm_{y_i}, \overline z).
\end{array}
\]

Therefore, if $P$ (resp.\ $Q$) is the set of variables $p$ (resp.\ $q$) such that $\varepsilon(p)=1$ (resp.\ $\varepsilon(q) = \partial$), the antecedent of the implication
\[
\bigmetaand_{i=1}^{n_x}\nomj_{x_i}\leq \alpha_i \metaand
\bigmetaand_{i=1}^{n_y} \beta_i\leq \cnomm_{y_i} \metaand
\bigmetaand_{i=1}^{n_z}\nomj_{z_i}\leq \gamma_i \metaand 
\bigmetaand_{i=1}^{n_w} \delta_i\leq \cnomm_{w_i}
\]
is equivalent to
\[
\bigmetaand_{p \in P} \bigvee \mathsf{Mv}(p)\leq p
\metaand
\bigmetaand_{q \in Q} q \leq \bigwedge \mathsf{Mv}(q),
\]
where for each $v_i \in P \cup Q$, 
\[
\mathsf{Mv}(v) = \{ \mathsf{LA}(\alpha_i)(\nomj_{x_i}, \overline z) \mid \alpha_i \mbox{ in } \overline\alpha, p = \crit{\alpha_i}  \} \cup \{\mathsf{RA}(\beta_i)(\cnomm_{y_i}, \overline z) \mid \beta_i \mbox{ in } \overline \beta, v_i = \crit{\beta_i}\}.
\]

After applying the transformations described above, the inductive shape of the input inequality allows to apply the following lemma to eliminate all the non-pure variables.

\begin{lemma}[Ackermann Lemma \cite{CoPa:non-dist}]
\label{lemma:ackermann}
Let $\alpha, \beta(p), \gamma(p), \delta(p), \varepsilon(p)$ be $\langalba$-formulas, and let $p \in \atprop$ such that $p$ does not occur in $\alpha$; if $\beta$ and $\varepsilon$ are negative in $p$ and $\gamma$ and $\delta$ are positive in $p$, then
 the following equivalences hold for every LE  $\mathbb{A}$: 
 \smallskip

 {{\centering
 \begin{tabular}{rlcl}
1. & $\mathbb{A}\models [(p\leq \alpha\ \&\ \beta(p)\leq\gamma(p))\ \Rightarrow\ \delta(p)\leq \varepsilon(p)]$ &
iff & $ \mathbb{A}\models (\beta(\alpha/p)\leq \gamma(\alpha/p)\ \Rightarrow\ \delta(\alpha/p)\leq \varepsilon(\alpha/p))$; \\
2. & $\mathbb{A}\models [(\alpha \leq p \ \&\ \gamma(p)\leq\beta(p))\ \Rightarrow\ \varepsilon(p)\leq \delta(p)]$ &
iff & $ \mathbb{A}\models (\gamma(\alpha/p)\leq \delta(\alpha/p)\ \Rightarrow\ \varepsilon(\alpha/p)\leq \delta(\alpha/p))$; \\
\end{tabular}
\par}}
\end{lemma}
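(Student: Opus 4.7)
The plan is to prove both items by fixing an arbitrary LE $\mathbb{A}$ and a valuation $V: \atprop \to \mathbb{A}$, and to leverage two facts: (i) since $p$ does not occur in $\alpha$, the value $V(\alpha)$ does not depend on $V(p)$, so we can freely re-assign $V(p)$ to $V(\alpha)$; and (ii) the sign conditions guarantee the needed (anti)monotonicity of each term function in $p$, i.e.\ $\delta^{\mathbb{A}}(\,\cdot\,)$ and $\gamma^{\mathbb{A}}(\,\cdot\,)$ are monotone in $p$, while $\beta^{\mathbb{A}}(\,\cdot\,)$ and $\varepsilon^{\mathbb{A}}(\,\cdot\,)$ are antitone in $p$. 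I would prove only item 1; item 2 is  dual and follows by order-reversal (or by the same argument interchanging the roles of $\beta, \gamma$ with $\delta, \varepsilon$ and the role of $\leq$).

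For the left-to-right direction of item 1, I would take any valuation $V$ satisfying $V(\beta(\alpha/p)) \leq V(\gamma(\alpha/p))$ and construct $V'$ agreeing with $V$ on every atomic proposition except setting $V'(p) := V(\alpha)$. Since $p$ does not occur in $\alpha$, $V'(\alpha) = V(\alpha) = V'(p)$, so the first conjunct $V'(p)\leq V'(\alpha)$ is trivially satisfied; and the substitution lemma gives $V'(\beta(p)) = V(\beta(\alpha/p))$ and $V'(\gamma(p)) = V(\gamma(\alpha/p))$, so the second conjunct also holds. Applying the hypothesis to $V'$ yields $V'(\delta(p))\leq V'(\varepsilon(p))$, which unwinds via the substitution lemma into the desired $V(\delta(\alpha/p))\leq V(\varepsilon(\alpha/p))$.

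For the right-to-left direction, I would take any valuation $V$ satisfying both $V(p)\leq V(\alpha)$ and $V(\beta(p))\leq V(\gamma(p))$. Using $V(p)\leq V(\alpha) = V(\alpha/p)$ together with antitonicity of $\beta^{\mathbb{A}}$ in $p$ and monotonicity of $\gamma^{\mathbb{A}}$ in $p$, we obtain the sandwich
\[
V(\beta(\alpha/p)) \;\leq\; V(\beta(p)) \;\leq\; V(\gamma(p)) \;\leq\; V(\gamma(\alpha/p)),
\]
so the hypothesis applied to $V$ yields $V(\delta(\alpha/p))\leq V(\varepsilon(\alpha/p))$. Then monotonicity of $\delta^{\mathbb{A}}$ and antitonicity of $\varepsilon^{\mathbb{A}}$ in $p$ together with $V(p)\leq V(\alpha)$ give $V(\delta(p)) \leq V(\delta(\alpha/p)) \leq V(\varepsilon(\alpha/p)) \leq V(\varepsilon(p))$, as required.

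The only nontrivial ingredient is the monotonicity/antitonicity of a term function in a given variable as determined by the sign of the corresponding leaf, which I would justify by a straightforward induction on term complexity using that every $f\in\mathcal{F}$ and $g\in\mathcal{G}$ is normal (hence monotone) in each coordinate $i$ with $\varepsilon_h(i)=1$ and antitone in each coordinate $i$ with $\varepsilon_h(i)=\partial$. This is the main (and only real) obstacle: if the sign conditions on $\beta,\gamma,\delta,\varepsilon$ in $p$ were relaxed, the sandwich argument above would fail, which is exactly why these polarity hypotheses are essential to the statement.
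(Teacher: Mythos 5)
Your proof is correct, and it is the standard argument for the Ackermann lemma: re-assign $V(p):=V(\alpha)$ (legitimate because $p$ does not occur in $\alpha$) for one direction, and use the polarity hypotheses to sandwich $V(\beta(\alpha/p))\leq V(\beta(p))\leq V(\gamma(p))\leq V(\gamma(\alpha/p))$ and then push $\delta,\varepsilon$ through $V(p)\leq V(\alpha)$ for the other. There is nothing in the paper to compare against: the lemma is imported without proof from \cite{CoPa:non-dist}, and the monotonicity/antitonicity fact you defer to an induction is exactly the observation already recorded in Notation~\ref{notation:termfunction}. One small remark: the right-hand side of item~2 as printed contains a typo ($\gamma(\alpha/p)\leq\delta(\alpha/p)$ should read $\gamma(\alpha/p)\leq\beta(\alpha/p)$); your dualization proves the intended, corrected statement.
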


The following corollary is an immediate consequence of Corollary \ref{cor:firstapprox_inductive} and Lemma \ref{lemma:ackermann}.

\begin{corollary}
\label{cor:albaoutput}
Any inductive inequality $(\varphi \leq \psi)[\eta_a/!a,\eta_b/!b][\alpha/!x,\beta/!y,\gamma/!z,\delta/!w]$ is equivalent to
\end{corollary}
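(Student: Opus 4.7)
The plan is to derive the stated equivalence as a two-stage procedure combining the first approximation of Corollary~\ref{cor:firstapprox_inductive} with the systematic elimination of propositional variables via the Ackermann Lemma~\ref{lemma:ackermann}, applied in the order prescribed by the dependency relation $<_\Omega$.

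First, I would apply Corollary~\ref{cor:firstapprox_inductive} to rewrite the inductive inequality as the quasi-inequality
\[
\forall \overline p, \overline{\nomj_a}, \overline{\cnomm_b}\;
\left(
\bigmetaand_{i=1}^{n_x}\nomj_{x_i}\leq \alpha_i \metaand
\bigmetaand_{i=1}^{n_y} \beta_i\leq \cnomm_{y_i} \metaand
\bigmetaand_{i=1}^{n_z}\nomj_{z_i}\leq \gamma_i \metaand
\bigmetaand_{i=1}^{n_w} \delta_i\leq \cnomm_{w_i} \Rightarrow
(\varphi \leq \psi)[\overline{\nomj_a/!a}, \overline{\cnomm_b/!b}]
\right).
\]
The consequent is already pure. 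The task is therefore to eliminate every propositional variable $p \in \overline p$ from the antecedent, producing a fully pure quasi-inequality.

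Next, I would use the PIA shape of the formulas $\overline\alpha, \overline\beta$ to isolate the critical variable inside each displayed inequality. Since each $\alpha_i$ (resp.\ $\beta_i$) contains a unique path of PIA nodes to its critical leaf $\crit{\alpha_i} = v_i$ (resp.\ $\crit{\beta_i} = v_i$), Lemma~\ref{lemma:piaadjoint} and Definition~\ref{def:RA_and_LA} guarantee that $\nomj_{x_i} \leq \alpha_i$ is equivalent to $\mathsf{LA}(\alpha_i)(\nomj_{x_i}, \overline z) \leq^{s_i} v_i$, and similarly $\beta_i \leq \cnomm_{y_i}$ is equivalent to $v_i \leq^{s_i} \mathsf{RA}(\beta_i)(\cnomm_{y_i}, \overline z)$. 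Gathering, for each propositional variable $v$, all the bounds arising this way into the set $\mathsf{Mv}(v)$ (as defined in the preceding paragraph of the excerpt), the antecedent of the first approximation becomes
\[
\bigmetaand_{p \in P} \bigvee \mathsf{Mv}(p)\leq p \metaand \bigmetaand_{q \in Q} q \leq \bigwedge \mathsf{Mv}(q),
\]
(together with the conjuncts involving $\overline\gamma, \overline\delta$ which contain no critical variables), where the elements of $\mathsf{Mv}(v)$ only involve pure variables and propositional variables $q <_\Omega v$.

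Finally, I would iteratively apply Lemma~\ref{lemma:ackermann} following the reverse of the dependency order $<_\Omega$, i.e.\ eliminating $<_\Omega$-maximal variables first. For each $p \in P$, the form $\bigvee \mathsf{Mv}(p) \leq p$ supplies exactly the hypothesis of item~(1) of the Ackermann Lemma; by the refined inductive shape, every other occurrence of $p$ in the remaining antecedent inequalities (those associated with $\overline\gamma, \overline\delta$, and the displayed $\gamma_i, \delta_i$) and in the consequent is $\varepsilon$-compatible, so the polarity hypotheses are satisfied. The case $q \in Q$ is dual, using item~(2). After eliminating all variables, what remains is the desired ALBA output: a pure quasi-inequality in $\langmeta$ obtained by substituting $\bigvee \mathsf{Mv}(p)$ and $\bigwedge \mathsf{Mv}(q)$ for each propositional variable in the first-approximation implication.

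The main technical obstacle is verifying that the order of Ackermann substitutions respects the polarity requirements of Lemma~\ref{lemma:ackermann}: one must check that $<_\Omega$-maximality of the variable being eliminated ensures that every other occurrence lies in a position with the correct sign, which is precisely guaranteed by Definition~\ref{Inducive:Ineq:Def}(2), since any SRR critical node containing $p$ has $p$ as its unique critical variable, so no other inequality introduces a $p$ with the wrong polarity. Once this is in place, the conclusion follows by iterated application of the lemmas and routine bookkeeping about which nominals and conominals carry which indices.
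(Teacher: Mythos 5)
Your proposal is correct and follows exactly the paper's route: the paper presents this corollary as an immediate consequence of Corollary \ref{cor:firstapprox_inductive} and Lemma \ref{lemma:ackermann}, with the adjunction step and the construction of the sets $\mathsf{Mv}(p)$, $\mathsf{Mv}(q)$ set up in the paragraph preceding the Ackermann Lemma, which is precisely your three-stage argument. One small imprecision worth noting: the polarity side conditions of Lemma \ref{lemma:ackermann} are met because every non-displayed occurrence of the eliminated variable is non-critical and hence carries the sign prescribed by $\varepsilon^\partial$, whereas Definition \ref{Inducive:Ineq:Def}(2) is what guarantees that $\bigvee \mathsf{Mv}(p)$ is free of $p$ and that a well-founded elimination order exists — but this misattribution does not affect the correctness of the argument.
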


\begin{equation}
\label{eq:albaoutput}
\forall \overline\nomj_x, \overline \cnomm_y, \overline \nomj_z, \overline \cnomm_w 
\left(
\left(\bigmetaand_{i=1}^{n_z}\nomj_{z_i}\leq \gamma_i \metaand 
\bigmetaand_{i=1}^{n_w} \delta_i\leq \cnomm_{w_i}\right)\left[\overline{\bigvee \mathsf{Mv}(p)/p}, \overline{\bigwedge \mathsf{Mv}(q)/q}\right] \Rightarrow
\hfill
(\varphi \leq \psi)[\overline{\nomj_a/!a}, \overline{\cnomm_b/!b}]
\right).
\end{equation}

\begin{example}
\label{eg:albaoutput_goranko_LE}
Consider the inequality $p\wedge \Box(\Diamond p\rightarrow \Box q)\leq \Diamond\Box\Box q$ from Example \ref{example:inductive} in the language of non distributive modal logic. It is inductive for $\varepsilon(p) = 1$ and $\varepsilon(q) = 1$. Corollary \ref{cor:firstapprox_inductive} yields that it is equivalent to
\begin{equation}
\label{eq:goranko_albaout_example_first_approx}
\forall \nomj,\cnomm(\nomj \leq p \metaand 
\nomj \leq \Box(\Diamond p\rightarrow \Box q)
\metaand 
\Diamond\Box\Box q \leq \cnomm
\Rightarrow \nomj \leq \cnomm
).
\end{equation}
In order to apply Lemma \ref{lemma:ackermann} and Corollary \ref{cor:albaoutput}, we need to compute $\mathsf{Mv}(q)$, and hence, we need to compute $\mathsf{LA}(\Box(\Diamond p\rightarrow \Box q))(\nomj, p)$ with respect to $q$ (the $x$ in Definition \ref{def:RA_and_LA} is $q$ here).
\smallskip

{{\centering
\begin{tabular}{rcl}
$\mathsf{LA}(\Box(\Diamond p\rightarrow \Box q))(\nomj, p)$ & $=$ & $\mathsf{LA}(\Diamond p\rightarrow \Box q)(\Diamondblack \nomj, p)$\\
& $=$ & $\mathsf{LA}(\Box q)(\Diamond p \wedge \Diamondblack \nomj, p)$ \\
& $=$ & $\mathsf{LA}(q)(\Diamondblack(\Diamond p \wedge \Diamondblack \nomj), p)$ \\
& $=$ & $\Diamondblack(\Diamond p \wedge \Diamondblack \nomj)$. \\
\end{tabular}
\par}}
\smallskip

\noindent The inequality $\nomj \leq \Box(\Diamond p\rightarrow \Box q)$ is indeed equivalent to $\Diamondblack(\Diamond p \wedge \Diamondblack \nomj) \leq q$. By Lemma \ref{lemma:ackermann}, \eqref{eq:goranko_albaout_example_first_approx} is equivalent to
\[
\forall \nomj, \cnomm(
\Diamond\Box\Box\Diamondblack(\Diamond \nomj \wedge \Diamondblack \nomj) \leq \cnomm \Rightarrow \nomj \leq \cnomm
).
\]
\end{example}

\begin{example}
\label{eg:alba_fregeLE}
Consider the LE version of Frege axiom for propositional logic, namely the inequality $p \rightharpoonup (q \rightharpoonup r) \leq (p \rightharpoonup q) \rightharpoonup (p \rightharpoonup r)$, where $\rightharpoonup \in \mathcal{G}$, its order type is $(\partial, 1)$, and $\rightharpoondown$ and $\bullet$ are its residuals w.r.t.\ the first and second coordinate, respectively. The inequality is not Sahlqvist, but is $(\Omega, \varepsilon)$-inductive for $\varepsilon(p) = \varepsilon(q) = \varepsilon(r) = 1$ and $\Omega$ such that $p <_\Omega q <_\Omega r$. Corollary \ref{cor:firstapprox_inductive} and Lemma \ref{lemma:ackermann} yield
\[
\begin{tabular}{rl}
& $\forall \nomj_1, \nomj_2, \nomj_3, \cnomm \big(
\nomj_1 \leq p \rightharpoonup (q \rightharpoonup r)
\metaand
\nomj_2 \leq p \rightharpoonup q
\metaand
\nomj_3 \leq p
\metaand
r \leq \cnomm
\Rightarrow
\nomj_1 \leq \nomj_2 \rightharpoonup (\nomj_3 \rightharpoonup \cnomm)
\big)$ \\
iff & $\forall \nomj_1, \nomj_2, \nomj_3, \cnomm \big(
q \bullet (p \bullet \nomj_1) \leq r
\metaand
p \bullet \nomj_2 \leq q
\metaand
\nomj_3 \leq p
\metaand
r \leq \cnomm
\Rightarrow
\nomj_1 \leq \nomj_2 \rightharpoonup (\nomj_3 \rightharpoonup \cnomm)
\big)$ \\
iff & $\forall \nomj_1, \nomj_2, \nomj_3, \cnomm \big(
(\nomj_3 \bullet \nomj_2) \bullet (\nomj_3 \bullet \nomj_1) \leq \cnomm
\Rightarrow
\nomj_1 \leq \nomj_2 \rightharpoonup (\nomj_3 \rightharpoonup \cnomm)
\big)$. \\
\end{tabular}
\]
\end{example}

\subsection{ALBA outputs flattened to generalized geometric implications}
\label{ssec:flattening}

In the present section, we show that the ALBA output of any inductive $\langbase$-inequality is semantically equivalent on perfect LEs to a generalized geometric $\langmeta$-implication (see Definitions \ref{def:albametalang} and \ref{def:generalized:geometric}). As shown in the previous subsection, the antecedents of such outputs have the following shape (carrying all the notational conventions from there):
\begin{equation}
\label{eq:albaoutputantecedent}
\left(\bigmetaand_{i=1}^{n_z}\nomj_{z_i}\leq \gamma_i 
\metaand 
\bigmetaand_{i=1}^{n_w} \delta_i\leq \cnomm_{w_i}\right)
\left[
\overline{\bigvee \mathsf{Mv}(p)/p}, 
\overline{\bigwedge \mathsf{Mv}(q)/q}
\right].
\end{equation}

In principle, all the $\gamma_i\left[\overline{\bigvee \mathsf{Mv}(p)/p},\overline{\bigwedge \mathsf{Mv}(q)/q}\right]$ and $\delta_i \left[\overline{\bigvee \mathsf{Mv}(p)/p},\overline{\bigwedge \mathsf{Mv}(q)/q}\right]$ can contain any alternation of skeleton and PIA connectives, with the only restriction that the connectives in $\langbase^*$ are found lower in their generation trees, since they can occur only in the minimal valuations $\mathsf{Mv}(p)$ and $\mathsf{Mv}(q)$. 

\begin{lemma}
\label{lemma:flattifypartial}
For every inductive inequality, representing the antecedent of its ALBA output as in \eqref{eq:albaoutputantecedent}, every inequality in $\nomj_{z_i} \leq \gamma_i$ (resp.\ $\delta_i \leq \cnomm_{w_i}$) is semantically equivalent on perfect LEs to a formula in $\langmeta$.
\end{lemma}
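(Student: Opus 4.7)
The idea is to peel off separately the two sources of non-atomicity in the substituted formula $\gamma_i[\overline{\bigvee\mathsf{Mv}(p)/p},\overline{\bigwedge\mathsf{Mv}(q)/q}]$: the outer PIA structure of $\gamma_i$ and the residuated-$\langbase^*$ structure of the minimal valuations. I would proceed by induction on the complexity of $\gamma_i$ (symmetrically for $\delta_i$), with an outer induction on the dependency order $<_\Omega$ used to order the nested minimal valuations. The ingredients are complete join-density of $J^{\infty}(\mathbb{A})$ and meet-density of $M^{\infty}(\mathbb{A})$ (together with the corresponding primeness of nominals and conominals), Lemma \ref{lemma:piaadjoint}(2), and the $\mathsf{LA}/\mathsf{RA}$ construction of Definition \ref{def:RA_and_LA}.

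First I would carry out a PIA analogue of the first-approximation step of Lemma \ref{lemma:firstapprox}: using meet-density at the positive leaves of $\gamma_i$ (replacing $\bigvee\mathsf{Mv}(p)$ by $\bigwedge\{\cnomm\mid\bigvee\mathsf{Mv}(p)\leq\cnomm\}$) and join-density at the negative ones ($\bigwedge\mathsf{Mv}(q)$ by $\bigvee\{\nomj\mid\nomj\leq\bigwedge\mathsf{Mv}(q)\}$), then distributing $\gamma_i$ over the resulting outer meets (by meet-preservation in positive coordinates) and over the outer joins (turned into meets by join-reversal in negative coordinates, per Lemma \ref{lemma:piaadjoint}(2)), the target inequality rewrites as
\[
\forall \overline{\cnomm_p}\,\forall \overline{\nomj_q}\;\Big(\bigmetaand_{p}\bigvee\mathsf{Mv}(p)\leq\cnomm_p \;\metaand\; \bigmetaand_{q}\nomj_q\leq\bigwedge\mathsf{Mv}(q) \;\Rightarrow\; \nomj_{z_i}\leq\gamma_i[\overline{\cnomm_p/p},\overline{\nomj_q/q}]\Big).
\]
By meet-primeness of each $\cnomm_p$ (resp.~join-primeness of each $\nomj_q$), the conjuncts $\bigvee\mathsf{Mv}(p)\leq\cnomm_p$ (resp.~$\nomj_q\leq\bigwedge\mathsf{Mv}(q)$) further decompose into conjunctions of inequalities $\mu\leq\cnomm_p$ (resp.~$\nomj_q\leq\mu$) for $\mu\in\mathsf{Mv}(\cdot)$.

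Each such $\mu$ is an $\mathsf{LA}$- or $\mathsf{RA}$-output in $\langbase^*$, hence built from the residuated connectives in $\mathcal{F}^*\cup\mathcal{G}^*$, and by the sign-tracking built into Definition \ref{def:RA_and_LA} it is a (negative or positive) skeleton formula of $\langbase^*$; its leaves are either pure variables or minimal valuations of $<_\Omega$-smaller propositional variables, already reduced to $\langmeta$-formulas by the outer induction. Hence every such inequality is a $\langmeta$-atom. For the consequent $\nomj_{z_i}\leq\gamma_i[\overline{\cnomm_p/p},\overline{\nomj_q/q}]$, where $\gamma_i$ is PIA with only pure variables at its leaves, I would apply $\mathsf{LA}(\gamma_i)$ once, choosing a distinguished leaf $\pureu_k$ on a PIA path and rewriting the inequality as $\mathsf{LA}(\gamma_i)(\nomj_{z_i},\overline{\pureu_{-k}})\leq^{s_k}\pureu_k$; the resulting left-hand side is a skeleton formula of $\langbase^*$ with all leaves pure, so this is a $\langmeta$-atom. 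Combining the three steps gives a $\langmeta$-formula equivalent to the original inequality.

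The main obstacle is the careful sign- and polarity-bookkeeping: I need to ensure that each pure variable introduced by meet/join-density or by residuation lands in the position demanded by Definition \ref{def:albametalang} (nominals at positive leaves, conominals at negative leaves), and in particular that $\mathsf{LA}$ of a PIA formula is genuinely a skeleton formula of $\langbase^*$ rather than having mixed skeleton/PIA structure. This amounts to tracking how signs flip along each residuation path in Definition \ref{def:RA_and_LA} and checking that the operations in $\mathcal{F}^*\cup\mathcal{G}^*$ introduced by $\mathsf{LA}$ are exactly those whose signed-tree classification makes the output skeleton. A secondary challenge is orchestrating the two inductions---on the structure of $\gamma_i$ and on $<_\Omega$---so that the minimal valuations appearing in the decomposed antecedent can legitimately be treated as already in $\langmeta$.
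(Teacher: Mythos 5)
Your plan rests on a structural assumption that fails: that $\gamma_i$ contributes only ``outer PIA structure'' while the minimal valuations contribute only residuated skeleton structure. Neither half is true. The formulas $\gamma_i$ and $\delta_i$ are the \emph{non-critical} substituents of the skeleton, and an inductive inequality places no constraint on its non-critical branches, so $\gamma_i[\overline{\bigvee\mathsf{Mv}(p)/p},\overline{\bigwedge\mathsf{Mv}(q)/q}]$ can contain an arbitrary alternation of skeleton and PIA blocks (the paper states this explicitly just above the lemma). Consequently your opening move --- replacing the substituted leaves by $\bigwedge\{\cnomm\mid\bigvee\mathsf{Mv}(p)\leq\cnomm\}$, etc., and then distributing $\gamma_i$ over the resulting meets and joins via Lemma \ref{lemma:piaadjoint}(2) --- is not available, since that lemma requires the \emph{entire} path from the root to the leaf to consist of $+g$ and $-f$ nodes. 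For the same reason, a minimal valuation $\mathsf{LA}(\alpha_j)(\nomj_{x_j},\overline z)$ is not a skeleton formula of $\langbase^*$: only the displayed path to $\nomj_{x_j}$ is skeleton, while the side formulas $\overline z$ hanging off it are arbitrary non-critical subformulas of $\alpha_j$ (with further minimal valuations substituted in), so the inequalities $\mu\leq\cnomm_p$ you produce are not $\langmeta$-atoms. The same objection hits your final step: $\mathsf{LA}(\gamma_i)(\nomj_{z_i},\overline{\pureu_{-k}})\leq^{s_k}\pureu_k$ is an atom only when $\gamma_i$ is a single PIA path, as otherwise the undisplayed coordinates of the residuals carry complex PIA material. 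Finally, complete meet-/join-primeness of conominals/nominals is a distributive-setting feature and is not available for general perfect LEs (cf.\ Remark \ref{remark:perfect_actually_not_perfect} and the footnote in Section \ref{ssec:kripke}); here it happens to be harmless, since $\bigvee\mathsf{Mv}(p)\leq\cnomm_p$ splits into $\bigmetaand_{\mu\in\mathsf{Mv}(p)}\mu\leq\cnomm_p$ by the universal property of joins alone, but it signals that distributive reasoning is being imported.

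The paper's proof avoids all of this with a single uniform recursion on the alternation depth of the whole substituted formula, making no distinction between material coming from $\gamma_i$ and material coming from the minimal valuations: group the topmost maximal block of PIA connectives into a negative (resp.\ positive) \emph{definite} skeleton $\psi$ (splitting at $+\wedge$ and $-\vee$ via Remark \ref{remark:definiteskeleton}), apply the first-approximation Lemma \ref{lemma:firstapprox} to $\nomj_{z_i}\leq\psi(\overline\zeta,\overline\xi)$ to introduce fresh conominals above the $\zeta_k$ and fresh nominals below the $\xi_k$, and recurse on the resulting inequalities $\zeta_k\leq\cnomo_k$ and $\nomh_k\leq\xi_k$, with a small adjustment (approximation by a fresh pure variable) when the topmost block is skeleton rather than PIA. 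No $\mathsf{LA}/\mathsf{RA}$ computation and no induction on $<_\Omega$ is needed. To repair your proposal, replace the single ``PIA first-approximation'' step by this alternating recursion.
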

\begin{proof}
Each formula in $\overline\gamma$ and $\overline\delta$ can be recursively broken down by grouping the connectives of the same type (skeleton or PIA), i.e., each $\gamma$ (and $\delta$) is either a pure variable, or a formula\footnote{Notice that for a formula whose positive (resp.\ negative) signed generation tree contains only skeleton nodes, its negative (resp.\ positive) signed generation tree contains only PIA nodes and vice versa.}
\begin{equation}
\label{eq:gammadelta_inpartialparts}
\gamma \coloneqq \psi(\zeta_1,\ldots,\zeta_n,\xi_1,\ldots,\xi_m) \quad\quad \delta \coloneqq \varphi(\xi_1,\ldots,\xi_n,\zeta_1,\ldots,\zeta_m)
\end{equation}
where $\psi$ (resp.\ $\varphi$) is a positive (resp.\ negative) PIA formula, hence a negative (resp.\ positive) skeleton formula, such that (without loss of generality) the first $n$ coordinates are positive, and the last $m$ are negative; each $\zeta_i$ (resp.\ $\xi_i$) is a formula of a syntactic shape analogous to $\delta$ (resp.\ $\gamma$). Hence, each  inequality $\nomj_{z_i} \leq \gamma_i$ (resp.\ $\delta_i \leq \cnomm_{w_i}$) is equivalent to a finite conjunction of inequalities $\nomj_{z_i} \leq \gamma_{ij}$ (resp.\ $\delta_{ij} \leq \cnomm_{w_i}$) where, by Remark \ref{remark:definiteskeleton}, each $\gamma_{ij}$ (resp.\ $\delta_{ij}$) has shape such as in \eqref{eq:gammadelta_inpartialparts}, with $\psi$ (resp.\ $\varphi$) being a negative (resp.\ positive) definite skeleton formula. In the case in which $\psi$ (resp.\ $\varphi$) is not an atom, by Lemma \ref{lemma:firstapprox}, each non-empty $\gamma_{ij}$ (resp.\ $\delta_{ij}$) is equivalent to
\[
\forall \overline\cnomo,\overline\nomh
( 
\bigmetaand_{k=1}^n\zeta_{k}\leq\cnomo_i \metaand \bigmetaand_{k=1}^m\nomh_i\leq\xi_{k}
\Rightarrow
\nomj_{z_i} \leq \psi(\overline \cnomo, \overline \nomh)
)
\ \ (\mbox{resp.} \ \forall\overline\nomh,\overline\cnomo(\bigmetaand_{k=1}^m\zeta_{i}\leq\cnomo_i \metaand \bigmetaand_{k=1}^n\nomh_i\leq\xi_k
\Rightarrow
\varphi(\overline\nomh,\overline\cnomo) \leq \cnomm_{w_i}
)).
\]
Since each $\zeta_i$ and (resp.\ $\xi_i$) has the same shape of formulas in $\overline\delta$ (resp.\ $\overline\gamma$), and it has the same position relative to some conominal (resp.\ nominal), it is possible to recursively apply Corollary \ref{cor:skeletonadjoint} again until atomic inequalities are reached. 

In the case in which one of such $\gamma_{ij}$ (resp.\ $\delta_{ij}$ is empty, it might happen to have an inequality $\nomj_{z_i} \leq t$ (resp.\ $t \leq \cnomm_{w_i}$) such that $t$ is a positive (resp.\ negative) skeleton; thus making it impossible to apply Lemma \ref{lemma:firstapprox}. For instance, consider the case of the inequality $\nomj \leq \Box p \wedge \Diamond q$. By Remark \ref{remark:definiteskeleton}, the inequality with definite skeleton formula $\Box \cdot \wedge \ \cdot $ is indeed equivalent to the conjunction of two inequalities with definite skeletons $\nomj \leq \Box p$ and $\nomj \leq \Diamond q$. The skeleton of the first one is non-empty; therefore it is possible to proceed as described above, but the second one contains a positive skeleton node at the root of the right side. To resolve this inconvenience, it is sufficient to introduce a new conominal that approximates the right hand side, i.e. it is sufficient to rewrite $\nomj \leq \Diamond q$ as $\forall \cnomn(\Diamond q \leq \cnomn \Rightarrow \nomj \leq \cnomn)$. More in general, if  $t$ is any positive (resp.\ negative) skeleton formula, then $\nomj \leq t$ (resp.\ $t \leq \cnomm$) is equivalent to 
\[
\forall \cnomn(t \leq \cnomn \Rightarrow \nomj \leq \cnomn) \quad\quad (\mbox{resp.\ } \forall \nomi(\nomi \leq t \Rightarrow \nomi \leq \cnomm)),
\]
for some fresh conominal (resp.\ nominal) $\cnomm$ (resp.\ $\nomi$). Then, one can proceed recursively on $t \leq \cnomn$ (resp.\ $\nomi \leq t$).\footnote{Notice that the two cases presented above can be unified in just the first one, as the approximation done in the second one is the same approximation in Lemma \ref{lemma:firstapprox} when the skeleton formula is a propositional variable. However, we have decided to split the two cases for clarity of presentation.}
\end{proof}

\begin{notation}
\label{notation:flat}
Let us denote the procedure described in the proof of the Lemma above as $\mathsf{Flat}$, and its output on some inequality $\nomj_{z_i} \leq \gamma_i$ (resp.\ $\delta_i \leq \cnomm_{w_i}$) as $\flattify{\nomj_{z_i} \leq \gamma_i}$ (resp.\ $\flattify{\delta_i \leq \cnomm_{w_i}}$).
\end{notation}

\begin{remark}
\label{remark:negatedskeleton}
Let $\varphi(!\overline x,!\overline y)$ and $\psi(!\overline z,!\overline w)$ be some positive and negative skeleton formulas, respectively, which are positive in $\overline x$ (resp.\ $\overline z$), and negative in $\overline y$ (resp.\ $\overline w$). The negated inequality $\varphi \nleq \psi$ is semantically equivalent on any perfect LE $\mathbb{A}$ to $\bigvee\{\nomj \in J^\infty(\mathbb{A}) \mid \nomj \leq \varphi\} \nleq \bigwedge \{\cnomm \in M^\infty(\mathbb{A}) \mid \psi \leq \cnomm \}$, as $\mathbb{A}$ is join-generated and meet-generated by $J^\infty(\mathbb{A})$ and $M^\infty(\mathbb{A})$, respectively. Hence, $\varphi \nleq \psi$ is equivalent to
\begin{equation}
\label{eq:negatedskeleton}
\exists \nomj,\cnomm(\nomj \leq \varphi \metaand \psi \leq \cnomm \metaand \nomj \nleq \cnomm),
\end{equation}
and therefore each such negated inequality can be written as an existentially quantified conjunction in $\langmeta$. Thus, we will informally regard these negated inequalities as abbreviations for \eqref{eq:negatedskeleton} (where $\nomj$ and $\cnomm$ are fresh pure variables), and hence pertaining to $\langmeta$. When such inequalities occur in the antecedent of some meta-implication $\varphi \nleq \psi \metaand \eta \Rightarrow \xi$, they can be equivalently rewritten as $\forall \nomj,\cnomm(\nomj \leq \varphi \metaand \psi \leq \cnomm \metaand \nomj \nleq \cnomm \metaand \eta \Rightarrow \xi)$.
\end{remark}

\begin{example}
\label{eg:flattification1}
The inequality $(\Box\Box p \circ q) \circback \Box \Diamond {\rhd} p \leq (p \circ q) \circback q$ in the language of the Lambek-Grishin calculus enriched with unary modalities is $(\Omega,\varepsilon)$-inductive for
$\varepsilon(p) = 1$ and $\varepsilon(q) = 1$, and any $\Omega$. Corollary \ref{cor:firstapprox_inductive} yields that it is equivalent to
\[
\forall \nomj_1,\nomj_2,\nomj_3,\cnomm_2 
\left[
\nomj_1 \leq (\Box\Box p \circ q) \circback \Box \Diamond {\rhd} p 
\metaand
\nomj_2 \leq p
\metaand
\nomj_3 \leq q
\metaand
q \leq \cnomm_1
\Rightarrow
\nomj_1 \leq (\nomj_2 \circ \nomj_3) \circback \cnomm_1
\right],
\]
and by Lemma \ref{lemma:ackermann}, its ALBA output is
\begin{equation}
\label{eq:example_flattening_alba_output_1}
\forall \nomj_1,\nomj_2,\nomj_3,\cnomm_2 
\left[
\nomj_1 \leq \underbrace{(\Box\Box \nomj_2 \circ \nomj_3) \circback \Box \Diamond {\rhd} \nomj_2}_{\gamma} 
\metaand
\underbrace{\nomj_3}_\delta \leq \cnomm_1
\Rightarrow
\nomj_1 \leq (\nomj_2 \circ \nomj_3) \circback \cnomm_1
\right].
\end{equation}
By the discussion above, the inequality $\nomj_1 \leq \gamma$ is equivalent to
\[
\forall \cnomo_1, \nomh_1, \nomh_2
(
\nomh_1 \leq \Box\Box \nomj_2 
\metaand
\nomh_2 \leq \nomj_3
\metaand
\Diamond{\rhd} \nomj_2 \leq \cnomo_1
\Rightarrow
\nomj_1 \leq (\nomh_1 \circ \nomh_2) \circback \Box\cnomo_1
).
\]
The inequality $\Diamond{\rhd} \nomj_2 \leq \cnomo_1$ can be further flattened, indeed it is equivalent to
\[
\forall \nomh_3(\nomh_3 \leq {\rhd} \nomj_2 \Rightarrow \Diamond\nomh_3 \leq \cnomo_1).
\]
Since \eqref{eq:example_flattening_alba_output_1} is classically equivalent (in the meta-language) to
\[
\forall \nomj_1,\nomj_2,\nomj_3,\cnomm_2 
\left[
\underbrace{\nomj_3}_\delta \leq \cnomm_1
\Rightarrow
\nomj_1 \leq (\nomj_2 \circ \nomj_3) \circback \cnomm_1
\metaor
\nomj_1 \nleq \underbrace{(\Box\Box \nomj_2 \circ \nomj_3) \circback \Box \Diamond {\rhd} \nomj_2}_{\gamma} 
\right],
\]
and $\nomj_1 \nleq (\Box\Box \nomj_2 \circ \nomj_3) \circback \Box \Diamond {\rhd} \nomj_2$ is equivalent to
\[
\exists \cnomo_1, \nomh_1, \nomh_2
(
\nomh_1 \leq \Box\Box \nomj_2 
\metaand
\nomh_2 \leq \nomj_3
\metaand
\forall \nomh_3(\nomh_3 \leq {\rhd} \nomj_2 \Rightarrow \Diamond\nomh_3 \leq \cnomo_1)
\metaand
\nomj_1 \nleq (\nomh_1 \circ \nomh_2) \circback \Box\cnomo_1
),
\]
by Remark \ref{remark:negatedskeleton}, quasi-inequality \eqref{eq:example_flattening_alba_output_1} is equivalent to some generalized geometric implication in $\langmeta$ with two alternations of quantifiers (against three alternations of skeleton and PIA connectives in the starting inequality).
\end{example}

The example above shows how to effectively rewrite an ALBA output as a generalized geometric implication in $\langmeta$, and can be straightforwardly generalized to the following lemma.

\begin{lemma}
\label{lemma:flattification}
The ALBA output of any inductive inequality $(\varphi\leq \psi)[\overline{\eta_a}/!\overline{a}, \overline{\eta_b} / !\overline{b}][\overline{\alpha}/!\overline{x}, \overline{\beta}/!\overline{y},\overline{\gamma}/!\overline{z}, \overline{\delta}/!\overline{w}]$ is semantically equivalent on perfect LEs to some generalized geometric implication in $\langmeta$ (cf.\ Definition \ref{def:generalized:geometric}). Furthermore, the number of quantifier alternations in such implication is $m-1$, where $m$ is the number of alternations of skeleton and PIA connectives in the starting inequality, excluding $\wedge$ and $\vee$ nodes.
\end{lemma}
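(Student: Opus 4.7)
The starting point is the explicit shape of the ALBA output given by Corollary \ref{cor:albaoutput}, namely a sentence
\[
\forall \overline{\pureu} \left( \Bigl(\bigmetaand_{i=1}^{n_z}\nomj_{z_i}\leq \gamma_i \ \metaand \ \bigmetaand_{i=1}^{n_w} \delta_i\leq \cnomm_{w_i}\Bigr) \bigl[\overline{\bigvee \mathsf{Mv}(p)/p},\, \overline{\bigwedge \mathsf{Mv}(q)/q}\bigr] \Rightarrow (\varphi \leq \psi)[\overline{\nomj_a/!a},\, \overline{\cnomm_b/!b}] \right).
\]
Since $\varphi$ (resp.\ $\psi$) is a positive (resp.\ negative) definite skeleton formula and every leaf is already a pure variable, the consequent is an $\langmeta$-atom. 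The plan is thus to rewrite the antecedent as a conjunction of existentially quantified conjunctions of (possibly nested) generalized geometric formulas, and to place those on the consequent side via classical prenex manipulations.

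First I would apply the procedure $\mathsf{Flat}$ of Lemma \ref{lemma:flattifypartial} to each conjunct $\nomj_{z_i} \leq \gamma_i$ and $\delta_i \leq \cnomm_{w_i}$. By the recursive definition of $\mathsf{Flat}$, each such conjunct is equivalent to a formula of the shape
\[
\forall \overline{\nomh}\,\forall \overline{\cnomo} \Bigl( \bigmetaand_{k} \nomh_k \leq \xi_k \ \metaand \ \bigmetaand_{k} \zeta_k \leq \cnomo_k \Rightarrow \text{atom} \Bigr),
\]
where each $\xi_k$ and $\zeta_k$ is of strictly lower alternation depth and admits the analogous recursive decomposition, until all the displayed inequalities are pure atoms. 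Next, using the classical meta-equivalence $(\forall \overline{u}(E \Rightarrow F) \metaand B) \Rightarrow C \equiv B \Rightarrow (C \metaor \exists \overline{u}(E \metaand \metanot F))$, and using Remark \ref{remark:negatedskeleton} to rewrite each $\metanot(s\leq t)$ as an existentially quantified $\langmeta$-atom $s' \nleq t'$ between pure variables, I would move all $\mathsf{Flat}$-produced antecedents of this shape to the consequent side of the main implication. This transforms the ALBA output into a sentence of exactly the syntactic shape required by Definition \ref{def:generalized:geometric}, namely a universally quantified implication whose antecedent is a conjunction of atoms (the plain-atom conjuncts produced by $\mathsf{Flat}$) and whose consequent is a disjunction of existentially quantified conjunctions of generalized geometric sub-formulas.

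For the quantifier-alternation count, I would argue by induction on $m$, the number of skeleton/PIA alternations in the input inequality (ignoring $\wedge, \vee$ nodes). The base case $m = 1$ corresponds to a very simple Sahlqvist inequality, so every $\gamma_i$ and $\delta_i$ is a pure variable after minimal-valuation substitution, $\mathsf{Flat}$ does nothing beyond placing atoms, and the output is a plain geometric implication, i.e.\ a member of $\gengeomlevel{0}$, matching $m-1 = 0$ alternations. For the inductive step, each single invocation of $\mathsf{Flat}$ at a fresh alternation-boundary introduces one universal-existential layer in the final form (one existential from Remark \ref{remark:negatedskeleton} plus one universal from the inner $\mathsf{Flat}$-call within), strictly increasing the $\gengeomlevel{n}$-level by one; hence $m$ alternations yield exactly $m-1$ alternations in the output, as each outermost skeleton/PIA block is absorbed without cost into the outermost $\forall \ldots (\bigmetaand \Rightarrow \bigmetaor \exists \ldots)$ scaffold.

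The step I expect to be most delicate is the bookkeeping in the alternation count: one has to verify that the classical prenex manipulation combines adjacent same-polarity quantifier blocks correctly (so that one genuinely gets $m-1$ alternations and not $m$), and that the presence of the $\wedge, \vee$ nodes along a path does not artificially inflate the count, which follows from the fact that Remark \ref{remark:definiteskeleton} lets us split such occurrences into conjunctions of definite-skeleton sub-inequalities at no cost in quantifier depth. Example \ref{eg:flattification1}, where three skeleton/PIA alternations yield two quantifier alternations in the final generalized geometric implication, is the reference case against which the inductive claim should be checked.
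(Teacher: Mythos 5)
Your plan follows the paper's proof essentially step for step: start from the output shape of Corollary \ref{cor:albaoutput}, apply $\mathsf{Flat}$ (Lemma \ref{lemma:flattifypartial}) to the antecedent conjuncts, contrapose so that the flattened material appears as existentially quantified disjuncts via Remark \ref{remark:negatedskeleton}, and read off the alternation count by induction on the skeleton/PIA block structure. One concrete inaccuracy: the consequent $(\varphi \leq \psi)[\overline{\nomj_a/!a}, \overline{\cnomm_b/!b}]$ is \emph{not} an $\langmeta$-atom as you claim, since Definition \ref{def:albametalang} only admits inequalities with a nominal on the left of a negative skeleton or a conominal on the right of a positive skeleton, whereas here both sides are complex; the paper repairs this by a further application of Remark \ref{remark:negatedskeleton}, introducing fresh $\nomj_\varphi$ and $\cnomm_\psi$ with $\nomj_\varphi \leq \varphi$ and $\psi \leq \cnomm_\psi$ in the antecedent and $\nomj_\varphi \leq \cnomm_\psi$ as the new consequent. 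You should add this step; it is routine but necessary for the output to actually lie in $\langmeta$. Finally, in your inductive step you conflate the $\gengeomlevel{n}$-index with the number of quantifier alternations (each increment of the $\gengeomlevel{}$-level absorbs \emph{two} alternations, which is why the paper lands each flattened disjunct in $\gengeomlevel{\lceil m/2 \rceil}$ while the total alternation count is $m-1$); your headline count of $m-1$ is right, but the parenthetical justification should be restated so the two quantities are not identified.
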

\begin{proof}
Let us consider the ALBA output of  $(\varphi\leq \psi)[\overline{\eta_a}/!\overline{a}, \overline{\eta_b} / !\overline{b}][\overline{\alpha}/!\overline{x}, \overline{\beta}/!\overline{y},\overline{\gamma}/!\overline{z}, \overline{\delta}/!\overline{w}]$, i.e.,
\[
\forall \overline\nomj_x, \overline \cnomm_y, \overline \nomj_z, \overline \cnomm_w 
\left(
\left(\bigmetaand_{i=1}^{n_z}\nomj_{z_i}\leq \gamma_i \metaand 
\bigmetaand_{i=1}^{n_w} \delta_i\leq \cnomm_{w_i}\right)\left[\overline{\bigvee \mathsf{Mv}(p)/p}, \overline{\bigwedge \mathsf{Mv}(q)/q}\right] \Rightarrow
\hfill
(\varphi \leq \psi)[\overline{\nomj_a/!a}, \overline{\cnomm_b/!b}]
\right),
\]
and let us write $\gamma_i^{\mathrm{mv}}$ (resp.\ $\delta_i^{\mathrm{mv}}$) to denote $\gamma_i\left[\overline{\bigvee \mathsf{Mv}(p)/p}, \overline{\bigwedge \mathsf{Mv}(q)/q}\right]$ (resp.\ $\delta_i\left[\overline{\bigvee \mathsf{Mv}(p)/p}, \overline{\bigwedge \mathsf{Mv}(q)/q}\right]$). By applying Remark \ref{remark:negatedskeleton} to the inequality $(\varphi \leq \psi)[\overline{\nomj_a/!a}, \overline{\cnomm_b/!b}]$, the ALBA output can be equivalently rewritten as follows
\[
\begin{array}{l}
\forall \overline\nomj_x, \overline \cnomm_y, \overline \nomj_z, \overline \cnomm_w, \nomj_\varphi, \cnomm_\psi 
\big(
\left(\bigmetaand_{i=1}^{n_z}\nomj_{z_i}\leq \gamma_i^{\mathrm{mv}} \metaand 
\bigmetaand_{i=1}^{n_w} \delta_i^{\mathrm{mv}} \leq \cnomm_{w_i}\right)
\metaand
\left(
\nomj_\varphi \leq \varphi
\metaand
\psi \leq \cnomm_\psi
\right)[\overline{\nomj_a/!a}, \overline{\cnomm_b/!b}]
\Rightarrow \\
\hfill
\nomj_\varphi \leq \cnomm_\psi
\big),
\end{array}
\]
which, by contraposition, is equivalent to
\[
\begin{array}{l}
\forall \overline\nomj_x, \overline \cnomm_y, \overline \nomj_z, \overline \cnomm_w, \nomj_\varphi, \cnomm_\psi 
\big(
\left(
\nomj_\varphi \leq \varphi
\metaand
\psi \leq \cnomm_\psi
\right)[\overline{\nomj_a/!a}, \overline{\cnomm_b/!b}]
\Rightarrow \\
\hspace{7cm}
\nomj_\varphi \leq \cnomm_\psi 
\metaor
\bigmetaor_{i=1}^{n_z}\nomj_{z_i}\nleq \gamma_i^{\mathrm{mv}} \metaor
\bigmetaor_{i=1}^{n_w} \delta_i^{\mathrm{mv}} \nleq \cnomm_{w_i}
\big).
\end{array}
\]
By Lemma \ref{lemma:flattifypartial} the consequent can be rewritten as a formula in $\langmeta$ using the $\mathsf{Flat}$ procedure (Notation \ref{notation:flat}):
\[
\begin{array}{l}
\forall \overline\nomj_x, \overline \cnomm_y, \overline \nomj_z, \overline \cnomm_w, \nomj_\varphi, \cnomm_\psi 
\big(
\left(
\nomj_\varphi \leq \varphi
\metaand
\psi \leq \cnomm_\psi
\right)[\overline{\nomj_a/!a}, \overline{\cnomm_b/!b}]
\Rightarrow \\
\hspace{6cm}
\nomj_\varphi \leq \cnomm_\psi 
\metaor
\bigmetaor_{i=1}^{n_z}\flattify{\nomj_{z_i}\nleq \gamma_i^{\mathrm{mv}}} \metaor
\bigmetaor_{i=1}^{n_w} \flattify{\delta_i^{\mathrm{mv}} \nleq \cnomm_{w_i}}
\big).
\end{array}
\]
By Remark \ref{remark:negatedskeleton}, it is easy to show by induction that each $\flattify{\nomj_{z_i}\nleq \gamma_i^{\mathrm{mv}}}$ (resp.\ $\flattify{\delta_i^{\mathrm{mv}} \nleq \cnomm_{w_i}}$) is equivalent to some formula in $\mathrm{GA}_{\lceil m/2 \rceil }$, where $m$ is the number of alternations of skeleton and PIA connectives in $-\gamma^{\mathrm{mv}}$ (resp.\ $+\delta^\mathrm{mv}$) excluding $\wedge$ and $\vee$.
\end{proof}

\begin{example}
\label{eg:flatteningmanyalternations}
The inequality $\Box\Diamond\Box\Diamond\Box p \leq p$ is $(\Omega,\varepsilon)$-inductive for $\varepsilon(p) = \partial$, and any dependency order $\Omega$. Its ALBA output is
\[
\forall \nomj, \cnomm
\left(
\nomj \leq \Box\Diamond\Box\Diamond\Box \cnomm \Rightarrow \nomj \leq \cnomm
\right).
\]
The $\mathsf{Flat}$ procedure on $\nomj \leq \Box\Diamond\Box\Diamond\Box\cnomm$ yields
\smallskip

{{\centering
\begin{tabular}{rl}
&  $\nomj \leq \Box\Diamond\Box\Diamond\Box\cnomm$\\
iff & $\forall \cnomn_1 (\Diamond\Box\Diamond\Box\cnomm \leq \cnomn_1 \Rightarrow \nomj \leq \Box\cnomn_1)$ \\
iff & $\forall \cnomn_1 (
\forall \nomi_1(\nomi_1 \leq \Box\Diamond\Box\cnomm \Rightarrow
\Diamond\nomi_1 \leq \cnomn_1
) \Rightarrow \nomj \leq \Box\cnomn_1)$ \\
iff & $\forall \cnomn_1 (
\forall \nomi_1(
\forall \cnomn_2(\Diamond\Box\cnomm\leq\cnomn_2\Rightarrow
\nomi_1 \leq \Box\cnomn_2
) \Rightarrow
\Diamond\nomi_1 \leq \cnomn_1
) \Rightarrow \nomj \leq \Box\cnomn_1)$ \\
iff & $\forall \cnomn_1 (
\forall \nomi_1(
\forall \cnomn_2(
\forall\nomi_2(\nomi_2 \leq \Box\cnomm \Rightarrow \Diamond\nomi_2\leq\cnomn_2)\Rightarrow
\nomi_1 \leq \Box\cnomn_2
) \Rightarrow
\Diamond\nomi_1 \leq \cnomn_1
) \Rightarrow \nomj \leq \Box\cnomn_1)$. \\
\end{tabular}
\par}}
\smallskip

The formula $\mathsf{Flat}(\nomj \nleq \Box\Diamond\Box\Diamond\Box\cnomm)$ can be rewritten by means of basic classical first order logic equivalences as:
\smallskip

{{\centering
\begin{tabular}{rl}
& $\exists \cnomn_1 (
\forall \nomi_1(
\forall \cnomn_2(
\forall\nomi_2(\nomi_2 \leq \Box\cnomm \Rightarrow \Diamond\nomi_2\leq\cnomn_2)\Rightarrow
\nomi_1 \leq \Box\cnomn_2
) \Rightarrow
\Diamond\nomi_1 \leq \cnomn_1
) \metaand \nomj \nleq \Box\cnomn_1)$ \\
iff & $\exists \cnomn_1 (
\forall \nomi_1(
\forall \cnomn_2(
\forall\nomi_2(\nomi_2 \leq \Box\cnomm \Rightarrow \Diamond\nomi_2\leq\cnomn_2)\Rightarrow
\nomi_1 \leq \Box\cnomn_2
) \Rightarrow
\Diamond\nomi_1 \leq \cnomn_1
) \metaand \nomj \nleq \Box\cnomn_1)$ \\
iff & $\exists \cnomn_1 (
\forall \nomi_1(
\Diamond\nomi_1 \nleq \cnomn_1
 \Rightarrow
\exists \cnomn_2(
\forall\nomi_2(\nomi_2 \leq \Box\cnomm \Rightarrow \Diamond\nomi_2\leq\cnomn_2)\metaand
\nomi_1 \nleq \Box\cnomn_2
)
) \metaand \nomj \nleq \Box\cnomn_1)$. \\
\end{tabular}
\par}}
\smallskip

\noindent The last line above is a generalized geometric implication in $\mathrm{GA}_2$.
\end{example}

\begin{example}
The $\mathcal{L}_{\mathrm{DML}}$-inequality $\Box(\Box q \rightarrow \Diamond ((\Box q \wedge q) \rightarrow p) ) \leq \Diamond p \vee {\lhd} q$ from Example \ref{example:inductive} is refined $(\Omega,\varepsilon)$-inductive for $\varepsilon(p) = \partial$, $\varepsilon(q) = \partial$, and any $\Omega$. By Corollary \ref{cor:firstapprox_inductive}, it is equivalent to
\[
\forall \nomj, \cnomm(
\nomj \leq \Box(\Box q \rightarrow \Diamond ((\Box q \wedge q) \rightarrow p) ) 
\metaand
\Diamond p \leq \cnomm
\metaand
{\lhd} q \leq \cnomm
\Rightarrow 
\nomj \leq \cnomm
),
\]
and by Lemma \ref{lemma:ackermann}, its ALBA output is
\[
\forall \nomj, \cnomm(
\nomj \leq \Box(\Box {\blacktriangleleft}\cnomm \rightarrow \Diamond ((\Box{\blacktriangleleft}\cnomm \ \wedge {\blacktriangleleft}\cnomm) \rightarrow \blacksquare\cnomm) ) 
\Rightarrow 
\nomj \leq \cnomm
).
\]
The $\mathsf{Flat}$ procedure on $\nomj \leq \Box(\Box {\blacktriangleleft}\cnomm \rightarrow \Diamond ((\Box{\blacktriangleleft}\cnomm \ \wedge {\blacktriangleleft}\cnomm) \rightarrow \blacksquare\cnomm) ) $ yields
\smallskip

{{\centering
\begin{tabular}{rl}
& $\nomj \leq \Box(\Box {\blacktriangleleft}\cnomm \rightarrow \Diamond ((\Box{\blacktriangleleft}\cnomm \ \wedge {\blacktriangleleft}\cnomm) \rightarrow \blacksquare\cnomm) ) $ \\
iff & $\forall \nomi_1,\cnomn_1(
\nomi_1 \leq \Box{\blacktriangleleft}\cnomm
\metaand
\Diamond ((\Box{\blacktriangleleft}\cnomm \ \wedge {\blacktriangleleft}\cnomm) \rightarrow \blacksquare\cnomm) \leq \cnomn_1
\Rightarrow
\nomj \leq \Box(\nomi_1 \rightarrow \cnomn_1)
)$ \\
iff & $\forall \nomi_1,\cnomn_1(
\nomi_1 \leq \Box{\blacktriangleleft}\cnomm
\metaand
\forall \nomi_2( \nomi_2 \leq (\Box{\blacktriangleleft}\cnomm \ \wedge {\blacktriangleleft}\cnomm) \rightarrow \blacksquare\cnomm \Rightarrow \Diamond\nomi_2 \leq \cnomn_1)
\Rightarrow
\nomj \leq \Box(\nomi_1 \rightarrow \cnomn_1)
)$\\
iff & $\forall \nomi_1,\cnomn_1(
\nomi_1 \leq \Box{\blacktriangleleft}\cnomm
\metaand
\forall \nomi_2( 
\forall \nomi_3,\cnomn_2( \nomi_3 \leq \Box{\blacktriangleleft}\cnomm \ \wedge {\blacktriangleleft}\cnomm
$\\ &\hfill $
\Rightarrow
\nomi_2 \leq \nomi_3 \rightarrow \blacksquare\cnomn_2)
\Rightarrow\Diamond\nomi_2 \leq \cnomn_1)
\Rightarrow
\nomj \leq \Box(\nomi_1 \rightarrow \cnomn_1)
)$ \\
iff & $\forall \nomi_1,\cnomn_1(
\forall\cnomn_3({\blacktriangleleft}\cnomm\leq\cnomn_3 \Rightarrow \nomi_1 \leq \Box\cnomn_3)
\metaand
\forall \nomi_2( 
\forall \nomi_3,\cnomn_2( \forall\cnomn_4({\blacktriangleleft}\cnomm \leq \cnomn_4 \Rightarrow \nomi_3 \leq \Box\cnomn_4) \metaand \nomi_3 \leq {\blacktriangleleft}\cnomm
$\\ &\hfill $
\Rightarrow
\nomi_2 \leq \nomi_3 \rightarrow \blacksquare\cnomn_2)
\Rightarrow\Diamond\nomi_2 \leq \cnomn_1)
\Rightarrow
\nomj \leq \Box(\nomi_1 \rightarrow \cnomn_1)
)$
\end{tabular}
\par}}
\smallskip

\noindent
where $\nomi_2 \leq (\Box{\blacktriangleleft}\cnomm \ \wedge {\blacktriangleleft}\cnomm) \rightarrow \blacksquare\cnomm$ gets flattened to
$\forall \nomi_3,\cnomn_2( \nomi_3 \leq \Box{\blacktriangleleft}\cnomm \ \wedge {\blacktriangleleft}\cnomm
\Rightarrow
\nomi_2 \leq \nomi_3 \rightarrow \blacksquare\cnomn_2)$.
Hence, $\nomj \nleq \Box(\Box {\blacktriangleleft}\cnomm \rightarrow \Diamond ((\Box{\blacktriangleleft}\cnomm \ \wedge {\blacktriangleleft}\cnomm) \rightarrow \blacksquare\cnomm) ) $ is equivalent to
\[
\begin{array}{l}
\exists \nomi_1,\cnomn_1\Big(
\forall\cnomn_3({\blacktriangleleft}\cnomm \leq \cnomn_3 \Rightarrow \nomi_1 \leq \Box\cnomn_3)
\metaand
\forall \nomi_2\big(\Diamond\nomi_2 \nleq \cnomn_1
\Rightarrow\\
\phantom{\exists \nomi_1,\cnomn_1\Big(}\ \ \ \
\exists \nomi_3,\cnomn_2( \forall\cnomn_4({\blacktriangleleft}\cnomm\leq \cnomn_4 \Rightarrow \nomi_3 \leq \Box\cnomn_4) \ \metaand \nomi_3 \leq{\blacktriangleleft}\cnomm
\metaand
\nomi_2 \nleq \nomi_3 \rightarrow \blacksquare\cnomn_2)
\big)
\metaand
\nomj \nleq \Box(\nomi_1 \rightarrow \cnomn_1)
\Big),
\end{array}
\]
which is a generalized geometric implication in $\mathrm{GA}_2$.
\end{example}
\section{Inverse ALBA for LEs}
\label{sec:inverse}

In the present section, we characterize the syntactic shape of the generalized geometric $\langmeta$-implications, referred to as {\cryptoinductiveinversecorrespondent}, which are semantically equivalent, via ALBA rules, to the ALBA outputs of inductive $\langbase$-inequalities (cf.\ Definition \ref{def:crypto_inductive_inverse_corr}). 
Our strategy proceeds in two steps, which are treated in the following subsections. Specifically, in Section \ref{ssec:inductiveinversecorrespondents} we characterize the syntactic shape of those generalized geometric implications which correspond to very simple Sahlqvist $\langbase^\ast$-inequalities (for some arbitrary and fixed LE-language $\langbase$).
In Section \ref{sec:cryptoinductive}, this characterization is refined to the proper subclass of generalized geometric implications which correspond to inductive $\langbase$-inequalities.

\subsection{\Inductiveinversecorrespondent}
\label{ssec:inductiveinversecorrespondents}

Towards the goal of the present section, we will need the following auxiliary definition:

\begin{definition}
\label{def:inversedisjunct}
A {\em positive} (resp.\ {\em negative}) {\em inverse disjunct} is a $\langmeta$-formula $\theta^+(\purev)$ (resp.\ $\theta^-(\purev)$) defined inductively together with a pure variable $\purev$ as follows:
\begin{description}
    \item[Base case:] $\theta^+(\purev) \coloneqq \nomj \leq \psi$ (resp.\ $\theta^-(\purev) \coloneqq \nomj \nleq \psi$) with $\purev \coloneqq \nomj$, or $\theta^+(\purev) \coloneqq \varphi \leq \cnomm$ (resp.\ $\theta^-(\purev) \coloneqq \varphi \nleq \cnomm$) with $\purev \coloneqq \cnomm$, where $\psi$ and $\varphi$ are a negative and a positive skeleton formula, respectively.
    \item[Conjunction/Disjunction:] $\theta^+(\purev) \coloneqq \bigmetaand_i \theta_i^+(\purev)$ (resp.\ $\theta^-(\purev) \coloneqq \bigmetaor_i \theta_i^-(\purev)$); 
    \item[Quantification 1:] $\theta^+(\purev) \coloneqq\forall\overline\pureu[\nomj \nleq \psi(\overline\pureu) \Rightarrow \bigmetaor_i \theta_i^-(\pureu_i)]$ (resp.\ $\theta^-(\purev) \coloneqq\exists \overline\pureu[\bigmetaand_i \theta_i^+(\pureu_i) \metaand \nomj \nleq \psi(\overline\pureu)]$), where $\nomj \coloneqq \purev$, $\psi$ is a negative definite skeleton formula, and each $\theta_i^-$ (resp.\ $\theta^+_i$) is a negative (resp.\ positive) inverse disjunct where $\nomj$ does not occur. The formula $\psi$ can be just a variable.
    \item[Quantification 2:] $\theta^+(\purev) \coloneqq \forall\overline\pureu[\varphi(\overline\pureu) \nleq \cnomm \Rightarrow \bigmetaor_i \theta_i^-(\pureu_i)]$ (resp.\ $\theta^-(\purev) \coloneqq\exists \overline\pureu[\bigmetaand_i \theta_i^+(\pureu_i) \metaand \varphi(\overline\pureu) \nleq \cnomm]$), where $\cnomm \coloneqq \purev$, $\varphi$ is a positive definite skeleton formula, and each $\theta_i^-$ (resp.\ $\theta^+_i$) is a negative (resp.\ positive) inverse disjunct where $\cnomm$ does not occur. The formula $\varphi$ can be just a variable.
\end{description}
\end{definition}

We will always assume that the quantifiers contained in inverse disjuncts introduce fresh variables.

\begin{remark}
\label{remark:restrictedquantifiersnothere}
The cases in Definition \ref{def:inversedisjunct} which contain quantifiers also contain some negated inequality, e.g.\ $\nomj \nleq \psi(\overline\pureu)$ where the negative skeleton contains all (and only) the variables in $\overline\pureu$ that were introduced by the quantifiers of that case. Such inequalities in the classical setting encode relational atoms that {\em restrict} the quantification. For instance, a formula such as $\forall \cnomn(\nomj \nleq \Box \cnomn \Rightarrow \xi)$ in the classical setting would be interpreted as $\forall y(x \nleq \Box y^c \Rightarrow \xi)$ when $\cnomn$ is interpreted as the cosingleton $\{y\}^c$, and $\nomj$ as the singleton $\{x\}$. Hence, the above mentioned formula is equivalent to $\forall y(\Box \{y\}^c \leq \{x\}^c \Rightarrow \xi)$, which, as discussed in Remark \ref{remark:flatinequalities}, is equivalent to $\forall y(Rxy \Rightarrow \xi)$, i.e.\ it would be one of Kracht's {\em restricted quantifiers}, namely a quantifier over the $R$-successors of $x$, often denoted $(\forall y \rhd x)\xi$ \cite{krachtphdthesis,blackburn2002modal}. Kracht's restricted quantifiers have been generalized in \cite{CoPan22} and \cite{inversedle} using inequalities in the language of ALBA. In the classical and distributive setting, such inequalities correspond to the {\em restricting inequalities} of Kracht (DLE) disjuncts in \cite{CoPan22,inversedle}. Due to the fact that relational atoms are not expressed uniformly across the various relational semantic settings for LE-logics\footnote{For instance, restricted inequalities would represent the right type of relational atom in {\em graph based frames}, but their negations in {\em polarity based frames} (see \cite{conradie2020non} and Section \ref{sec:instances} for more information on these semantics).}, we have decided to drop the {\em restricted quantifier} terminology as it does not apply in the present, purely algebraic setting. 
\end{remark}

\begin{lemma}
\label{lemma:compaction}
Every positive (resp.\ negative) inverse disjunct $\theta^+(\purev)$ (resp.\ $\theta^-(\purev)$) is equivalent to some inequality $\nomj \leq \xi$ (resp.\ $\nomj \nleq \xi$) if $\purev$ is some nominal $\nomj$, and to some inequality $\zeta \leq \cnomm$ (resp.\ $\zeta \nleq \cnomm$) if $\purev$ is some conominal $\cnomm$. 
\end{lemma}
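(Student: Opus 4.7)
The plan is to proceed by simultaneous induction on the recursive structure of positive and negative inverse disjuncts per Definition \ref{def:inversedisjunct}, showing that each $\theta^+(\nomj)$ collapses to a single inequality $\nomj\leq\xi$, each $\theta^+(\cnomm)$ to $\zeta\leq\cnomm$, each $\theta^-(\nomj)$ to $\nomj\nleq\xi$, and each $\theta^-(\cnomm)$ to $\zeta\nleq\cnomm$, with $\xi,\zeta$ formulas of $\langbase^\ast$ not containing $\purev$. The base cases are immediate. For the conjunction/disjunction case, the inductive hypothesis rewrites each component as such an inequality, and I combine them using the standard equivalences $\bigmetaand_i(\nomj\leq\xi_i)\equiv\nomj\leq\bigwedge_i\xi_i$, $\bigmetaand_i(\zeta_i\leq\cnomm)\equiv\bigvee_i\zeta_i\leq\cnomm$, $\bigmetaor_i(\nomj\nleq\xi_i)\equiv\nomj\nleq\bigwedge_i\xi_i$, and $\bigmetaor_i(\zeta_i\nleq\cnomm)\equiv\bigvee_i\zeta_i\nleq\cnomm$.

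The main work lies in the quantification cases; I sketch Quantification~1 for a positive inverse disjunct with $\purev=\nomj$, the remaining three sub-cases being handled dually. Here $\theta^+(\nomj)=\forall\overline\pureu[\nomj\nleq\psi(\overline\pureu)\Rightarrow\bigmetaor_i\theta_i^-(\pureu_i)]$ with $\psi$ a negative definite skeleton formula, and each $\pureu_i$ is labelled as a nominal or conominal according to its polarity in $-\psi$ (positively signed leaves correspond to nominals, negatively signed to conominals). Taking the contrapositive of the inner implication yields $\forall\overline\pureu[\bigmetaand_i\metanot\theta_i^-(\pureu_i)\Rightarrow\nomj\leq\psi(\overline\pureu)]$, and by the inductive hypothesis each $\metanot\theta_i^-(\pureu_i)$ is an atomic inequality $\pureu_i\leq\zeta_i$ (if $\pureu_i$ is a nominal) or $\zeta_i\leq\pureu_i$ (if $\pureu_i$ is a conominal), where $\zeta_i$ does not contain $\pureu_i$. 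I then claim that this quasi-inequality is equivalent to the single inequality $\nomj\leq\psi[\overline{\zeta_i/\pureu_i}]$, taking $\xi:=\psi[\overline{\zeta_i/\pureu_i}]$.

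The forward direction of the claim is easy: $\psi$ is antitone in each nominal coordinate and monotone in each conominal coordinate (since $-\psi$ is definite skeleton), so under the constraints $\pureu_i\leq\zeta_i$ (resp.\ $\zeta_i\leq\pureu_i$) we get $\psi[\overline{\zeta/\pureu}]\leq\psi(\overline\pureu)$ and hence $\nomj\leq\psi(\overline\pureu)$. The reverse direction is the principal obstacle, and I expect to handle it via Corollary \ref{cor:skeletonadjoint} together with join-density of $J^\infty(\mathbb{A})$ and meet-density of $M^\infty(\mathbb{A})$ in perfect LEs: writing each $\zeta_i$ as $\bigvee\{\nomh\in J^\infty(\mathbb{A}):\nomh\leq\zeta_i\}$ when the coordinate is antitone (nominal) and as $\bigwedge\{\cnoml\in M^\infty(\mathbb{A}):\zeta_i\leq\cnoml\}$ when monotone (conominal), Corollary \ref{cor:skeletonadjoint} ensures $\psi$ reverses these joins into meets in antitone coordinates and preserves these meets in monotone coordinates, yielding $\psi[\overline{\zeta/\pureu}]=\bigwedge_{\overline\pureu\text{ satisfying the constraints}}\psi(\overline\pureu)$, so that $\nomj\leq\psi[\overline{\zeta/\pureu}]$ is equivalent to $\nomj\leq\psi(\overline\pureu)$ for every constrained tuple. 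Quantification~2 is dual, producing $\varphi[\overline{\zeta/\pureu}]\leq\cnomm$ with $\varphi$ a positive definite skeleton, and the negative cases $\theta^-$ follow by negating the corresponding positive arguments.
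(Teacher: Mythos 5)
Your proposal is correct and follows essentially the same route as the paper's proof: simultaneous induction, the same combining equivalences for the conjunction/disjunction case, and contraposition followed by the inductive hypothesis in the quantification cases. The only difference is that where you re-derive the key equivalence $\nomj\leq\psi[\overline{\zeta/\pureu}]$ from Corollary \ref{cor:skeletonadjoint} and join/meet-density, the paper simply invokes Lemma \ref{lemma:firstapprox} (first approximation), whose proof is exactly the argument you sketch.
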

\begin{proof}
We proceed by simultaneous induction on the complexity of $\theta^+$ and $\theta^-$ (we do not discuss the cases of $\theta^-$ as they are analogous to those of $\theta^+$). If $\theta^+$ is an inequality $\nomj \leq \varphi$ (resp.\ $\varphi \leq \cnomm)$ such that $\varphi$ is a negative (resp.\ positive) skeleton, we are done. 

If $\theta^+$ is a conjunction of positive inverse disjuncts sharing the same pure variable $\bigmetaand_i^n\theta^+_i(\purev)$, by applying the inductive hypothesis on each $\theta^+_i$, then $\theta^+$ is equivalent to $\bigmetaand_i^n \nomj \leq \xi_i$ (resp.\ $\bigmetaand_i^n \zeta_i \leq \cnomm)$ if $\purev$ is some nominal $\nomj$ (resp.\ conominal $\cnomm$); hence $\theta^+$ is equivalent to $\nomj \leq \bigwedge_i^n \xi_i$ (resp.\ $\bigvee_i^n \zeta_i \leq \cnomm$). 

Suppose that $\purev$ is some nominal $\nomj$ (resp.\ conominal $\cnomm$) and $\theta^+$ has shape $\forall\overline\pureu(\nomj \nleq \varphi(\overline\pureu) \Rightarrow \bigmetaor^n_i\theta^-_i(\pureu_i))$ (resp.\ $\forall\overline\pureu(\varphi(\overline\pureu) \nleq \cnomm \Rightarrow \bigmetaor^n_i\theta^-_i(\pureu_i))$) such that $\varphi$ is some negative (resp.\ positive) skeleton formula. By taking the contrapositive, $\theta^+$ is equivalent to
\begin{equation}
\label{eq:contrapositive_of_kracht_disjunct}
\forall\overline\pureu(\bigmetaand_i^n\metanot\theta^-_i(\pureu_i) \Rightarrow \nomj \leq \varphi(\overline\pureu)) \quad\quad 
(\mbox{resp.} \ 
\forall\overline\pureu(\bigmetaand_i^n\metanot\theta^-_i(\pureu_i) \Rightarrow \varphi(\overline\pureu) \leq \cnomm),
)
\end{equation}
and by inductive hypothesis, each $\theta_i^-(\pureu_i)$ is equivalent to some inequality $\pureu_i \nleq \xi_i$ if $\pureu_i$ is a nominal, and to some inequality $\zeta_i \nleq \pureu_i$ if it is a conominal; hence $\metanot\theta_i^-(\pureu_i)$ is equivalent to some inequality $\pureu_i \leq \xi_i$ or $\zeta_i \leq \pureu_i$, depending on whether $\pureu_i$ is a nominal or conominal. Let us assume without loss of generality that the first $m$ coordinates of $\varphi$ are positive, and the last $n - m$ are negative. Then \eqref{eq:contrapositive_of_kracht_disjunct} is equivalent to
\begin{equation}
\label{eq:contrapositive_before_inverse_first_approx_eg1}
\forall\overline\pureu(\bigmetaand_{i=1}^m \pureu_i \leq \xi_i
\metaand
\bigmetaand_{i = m+1}^n \zeta_i \leq \pureu_i
\Rightarrow \nomj \leq \varphi(\overline\pureu)) \quad\quad 
(\mbox{resp.} \ 
\forall\overline\pureu(\bigmetaand_{i=1}^m \pureu_i \leq \xi_i
\metaand
\bigmetaand_{i = m+1}^n \zeta_i \leq \pureu_i\Rightarrow \varphi(\overline\pureu) \leq \cnomm),
\end{equation}
Then, by Lemma \ref{lemma:firstapprox}, \eqref{eq:contrapositive_before_inverse_first_approx_eg1} is equivalent to
$\nomj \leq \varphi(\xi_1,\ldots,\xi_m,\zeta_{m+1},\ldots,\zeta_n)$ (resp.\ $\varphi(\xi_1,\ldots,\xi_m,\zeta_{m+1},\ldots,\zeta_n) \leq \cnomm$).
\end{proof}

\begin{example}
\label{eg:compaction1}
The negated outputs of the $\mathsf{Flat}$ procedure (cf.\ Notation \ref{notation:flat}) on the inequalities in the antecedent of ALBA output are straightforwardly equivalent to negative inverse disjuncts. Consider the formula 
\[
\exists \cnomo_1, \nomh_1, \nomh_2
(
\nomh_1 \leq \Box\Box \nomj_2 
\metaand
\nomh_2 \leq \nomj_3
\metaand
\forall \nomh_3(\Diamond\nomh_3 \nleq \cnomo_1 \Rightarrow \nomh_3 \nleq {\rhd} \nomj_2)
\metaand
\nomj_1 \nleq (\nomh_1 \circ \nomh_2) \circback \cnomo_1
),
\]
from Example \ref{eg:flattification1} (in that example, the subformula $\forall \nomh_3(\Diamond\nomh_3 \nleq \cnomo_1 \Rightarrow \nomh_3 \nleq {\rhd} \nomj_2)$ occurs contrapositively, namely $\forall \nomh_3(\nomh_3 \leq {\rhd} \nomj_2 \Rightarrow \Diamond\nomh_3 \leq \cnomo_1)$). By Lemma \ref{lemma:compaction}, the positive inverse disjunct $\theta^+(\cnomo_1)\coloneqq \forall \nomh_3(\Diamond\nomh_3 \nleq \cnomo_1 \Rightarrow \nomh_3 \nleq {\rhd} \nomj_2)$ is equivalent to the inequality $\Diamond{\rhd}\nomj_2 \leq \cnomo_1$. Hence, again by the procedure described in Lemma \ref{lemma:compaction}, 
\[
\exists \cnomo_1, \nomh_1, \nomh_2
(
\nomh_1 \leq 
\metaand
 \leq \nomj_3
\metaand
\Diamond{\rhd}\nomj_2 \leq \cnomo_1
\metaand
\nomj_1 \nleq (\nomh_1 \circ \nomh_2) \circback \cnomo_1
),
\]
is equivalent to the negated inequality $\nomj_1 \nleq (\Box\Box \nomj_2  \circ \nomj_3) \circback \Diamond{\rhd}\nomj_2$.
\end{example}

The following definition characterizes the syntactic shape of those $\langmeta$-formulas which result from the procedure $\mathsf{Flat}$ (cf.\ Notation \ref{notation:flat}) applied to the ALBA outputs of inductive $\langbase$-inequalities. 
This syntactic shape includes those geometric implications in $\langmeta$ which can be effectively transformed via ALBA rules into ALBA outputs of inductive LE inequalities (the \cryptoinductiveinversecorrespondent, cf.~Definition \ref{def:crypto_inductive_inverse_corr}). 
In Section \ref{ssec:kripke}, we will show that these formulas project over the class of generalized Kracht formulas \cite{kikot}, and hence Kracht formulas \cite{krachtphdthesis,blackburn2002modal}, in the setting of Kripke frames for classical modal logic.

\begin{definition}
\label{def:inductiveinvcorr}
A $\langmeta$-formula is {\em \aninductiveinversecorrespondent}  if it is closed with shape
\[
\forall \overline \nomj, \overline\cnomm, \overline\nomi, \overline\cnomn \left( 
\bigmetaor_{\nomj_i \mbox{ in } \overline\nomj}
\theta_{\nomj_i}^-(\nomj_i)
\metaor
\bigmetaor_{\cnomm_i \mbox{ in } \overline\cnomm}
\theta_{\cnomm_i}^-(\cnomm_i)
\metaor
\varphi \leq \psi
\right),
\]
where each $\theta_{\nomj_i}^-$ and $\theta_{\cnomm_i}^-$ is a negative inverse disjunct (cf.\ Definition \ref{def:inversedisjunct}), and
\begin{enumerate}
    \item the variables in $\overline\nomi$ (resp.\ $\overline\cnomn$) occur negatively (resp.\ positively) in their (possibly negated) inequalities;\footnote{We remind that given an inequality $\alpha\leq\beta$, to understand whether a variable is positive or negative, it is sufficient to compute the signed generation trees $+\alpha$ and $-\beta$ and check the sign of the occurrences of the variable. Given a negated inequality $\alpha\nleq\beta$, the signed generation trees $-\alpha$ and $+\beta$ shall be used.}
    \item $\varphi$ (resp.\ $\psi$) is a scattered positive (resp.\ negative) skeleton formula; 
    \item each variable in $\overline\nomj$ and $\overline\nomi$ (resp.\ $\overline\cnomm$ and $\overline\cnomn$) occurs positively (resp.\ negatively) in exactly one of $\varphi$ and $\psi$. 
\end{enumerate}
Pure variables in $\overline\nomj$ and $\overline\nomi$ (resp.\ $\overline\cnomm$ and $\overline\cnomn$) may also coincide.\footnote{The shape presented above is not properly in $\langmeta$, but it can be equivalently rewritten as 
\[
\forall \nomj_\varphi, \cnomm_\psi, \overline \nomj, \overline\cnomm, \overline\nomi, \overline\cnomn \left( 
\nomj_\varphi \leq \varphi \metaand \psi \leq \cnomm_\psi
\Rightarrow
\bigmetaor_{\nomj_i \mbox{ in } \overline\nomj}
\theta_{\nomj_i}^-(\nomj_i)
\metaor
\bigmetaor_{\cnomm_i \mbox{ in } \overline\cnomm}
\theta_{\cnomm_i}^-(\cnomm_i)
\metaor
\nomj_\varphi \leq \cnomm_\psi
\right),
\]
which is a generalized geometric implication in $\langmeta$.}
\end{definition}

Since the negated outputs of the $\mathsf{Flat}$ procedure on the inequalities in the antecedent of an ALBA output are straightforwardly equivalent to negative inverse disjuncts, the following Proposition readily follows.

\begin{prop}
\label{prop:inductive_inductiveinverse}
Every inductive $\langbase$-inequality is equivalent to a $\langmeta$-{\inductiveinversecorrespondent}.
\end{prop}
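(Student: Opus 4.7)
The plan is to derive the claim by combining the ALBA transformation of Section \ref{ssec:albadirect} with the flattification procedure of Section \ref{ssec:flattening}, and then matching the result to the template of Definition \ref{def:inductiveinvcorr} via a simultaneous induction. Given an inductive inequality $(\varphi\leq\psi)[\overline{\eta_a}/!\overline{a},\overline{\eta_b}/!\overline{b}][\overline{\alpha}/!\overline{x},\overline{\beta}/!\overline{y},\overline{\gamma}/!\overline{z},\overline{\delta}/!\overline{w}]$, Corollary \ref{cor:albaoutput} yields its ALBA output \eqref{eq:albaoutput}. Applying the classical tautology $A\Rightarrow B \equiv B \metaor \metanot A$ to push the antecedent to the right, and bringing each antecedent atom into $\langmeta$ via $\mathsf{Flat}$ (Lemma \ref{lemma:flattifypartial}), produces an equivalent quasi-disjunction of the form $\forall\overline{\mathrm{vars}}\bigl((\varphi\leq\psi)[\overline{\nomj_a/!a},\overline{\cnomm_b/!b}] \metaor \bigmetaor_i \metanot\flattify{\nomj_{z_i}\leq \gamma_i^{\mathrm{mv}}} \metaor \bigmetaor_i \metanot\flattify{\delta_i^{\mathrm{mv}}\leq \cnomm_{w_i}}\bigr)$, where $\gamma^{\mathrm{mv}},\delta^{\mathrm{mv}}$ denote the minimal-valuation substitutions as in Lemma \ref{lemma:flattification}.

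The core step is a simultaneous induction on the recursion underlying $\mathsf{Flat}$, establishing, at every nesting level, the four claims (a) $\metanot\flattify{\nomj\leq\gamma}\equiv\theta^-(\nomj)$, (b) $\metanot\flattify{\delta\leq\cnomm}\equiv\theta^-(\cnomm)$, (c) $\flattify{\zeta\leq\cnomo}\equiv\theta^+(\cnomo)$, and (d) $\flattify{\nomh\leq\xi}\equiv\theta^+(\nomh)$, with $\theta^\pm$ as in Definition \ref{def:inversedisjunct}. The base cases (pure-variable sides of the inequality) match the ``Base case'' clause of Definition \ref{def:inversedisjunct}. For the inductive step of (a), suppose $\gamma=\psi(\overline\zeta,\overline\xi)$ is decomposed as in \eqref{eq:gammadelta_inpartialparts}; the proof of Lemma \ref{lemma:flattifypartial} rewrites $\flattify{\nomj\leq\gamma}$ as $\forall\overline\cnomo,\overline\nomh\bigl(\bigmetaand_k\flattify{\zeta_k\leq\cnomo_k}\metaand\bigmetaand_k\flattify{\nomh_k\leq\xi_k}\Rightarrow\nomj\leq\psi(\overline\cnomo,\overline\nomh)\bigr)$. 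Negating this and invoking the inductive hypotheses (c)--(d) on the inner $\flattify{\cdot}$'s produces precisely the ``Quantification 1'' clause of $\theta^-(\nomj)$. The inductive step of (c) is dual: contrapositing the $\forall$-quantified implication coming from the dual decomposition and invoking (a)--(b) on the resulting disjoined negated atoms yields the ``Quantification 2'' clause of $\theta^+(\cnomo)$. Cases (b) and (d) are entirely analogous by symmetry.

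Finally, the extra conditions of Definition \ref{def:inductiveinvcorr} are verified by inspection: the scattered character of the skeleton inequality $(\varphi\leq\psi)[\overline{\nomj_a/!a},\overline{\cnomm_b/!b}]$ is inherited from the scattered character of $(\varphi\leq\psi)[!\overline{a},!\overline{b}]$ via the bijective substitution $\overline{\nomj_a/!a},\overline{\cnomm_b/!b}$; and the polarity conditions on the outermost-quantified pure variables are tracked through the simultaneous induction, using the fact that critical occurrences of the variables in $\overline\gamma,\overline\delta$ have consistent polarities through every level of the flattification. The main obstacle is the bookkeeping in the simultaneous induction: organising the four claims so that they compose cleanly under the alternation of skeleton and PIA layers, and correctly partitioning the $\mathsf{Flat}$-introduced pure variables into those that end up in $\overline\nomj,\overline\cnomm$ (the outer pure variables of the inverse disjuncts, corresponding to the original $\nomj_{z_i},\cnomm_{w_i}$) versus $\overline\nomi,\overline\cnomn$ (auxiliary pure variables appearing inside inverse disjuncts and in the skeleton inequality). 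Once this correspondence between the two recursions is isolated, the match is mechanical.
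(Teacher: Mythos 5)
Your proposal is correct and follows essentially the same route as the paper, which states this proposition with only a one-sentence justification (that the negated outputs of $\mathsf{Flat}$ on the antecedent inequalities of the ALBA output are straightforwardly equivalent to negative inverse disjuncts, cf.\ the remark preceding the proposition and Example \ref{eg:compaction1}); your simultaneous induction on the $\mathsf{Flat}$ recursion, matching its clauses to the Base/Conjunction/Quantification cases of Definition \ref{def:inversedisjunct}, is precisely the bookkeeping the paper leaves implicit. The verification of the side conditions of Definition \ref{def:inductiveinvcorr} (scatteredness and the polarity/occurrence conditions on the pure variables) is stated somewhat tersely, but it does follow from Corollary \ref{cor:albaoutput} and the polarity-preservation observed in Remark \ref{remark:definiteskeleton}, exactly as you indicate.
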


The following property of scattered very simple Sahlqvist inequalities is essential for proving the main result of this section.
\begin{lemma}
\label{lemma:vss_substitute_nomcnom_var}
Any  scattered very simple $\varepsilon$-Sahlqvist inequality $(\varphi \leq \psi)[!\overline p / !\overline x, !\overline q / !\overline y, \overline\gamma / !\overline z, \delta / !\overline w]$\footnote{We remind the reader that scattered very simple $\varepsilon$-Sahlqvist inequalities are  Sahlqvist inequalities (cf.\ Definition \ref{def:sahlqvist_and_vss}) such that there is at most one  $\varepsilon$-critical  occurrence for each variable, and such occurrence, if any, is the leaf of a Skeleton branch.} is equivalent to the pure inequality
\[
(\varphi \leq \psi)[!\overline p / !\overline x, !\overline q / !\overline y, \overline\gamma / !\overline z, \overline\delta / !\overline w][\overline{\nomj_p} / \overline p, \overline{\cnomm_q} / \overline q].
\]
\end{lemma}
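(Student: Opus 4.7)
My plan is to prove the two directions separately. The forward direction is immediate: if the given inequality holds for every propositional valuation into a perfect LE $\mathbb{A}$, then in particular it holds for valuations assigning a nominal $\nomj_{p_i} \in \jty(\mathbb{A})$ to each $p_i$ in $\overline p$ and a conominal $\cnomm_{q_i} \in \mty(\mathbb{A})$ to each $q_i$ in $\overline q$, yielding precisely the pure inequality.

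The backward direction follows the ALBA mechanism. I would first apply Corollary \ref{cor:firstapprox_inductive} to the given very simple Sahlqvist inequality. Since, by very simplicity, each $\alpha_i$, $\beta_i$, $\eta_a$, $\eta_b$ is a single propositional variable, the first-approximation meta-implication reduces to
\[
\bigmetaand_i (\nomj_{p_i} \leq p_i) \metaand \bigmetaand_i (q_i \leq \cnomm_{q_i}) \metaand \bigmetaand_j (\nomj_{z_j} \leq \gamma_j) \metaand \bigmetaand_j (\delta_j \leq \cnomm_{w_j}) \Rightarrow (\varphi \leq \psi)[\overline{\nomj_p}/\overline x, \overline{\cnomm_q}/\overline y, \overline{\nomj_z}/\overline z, \overline{\cnomm_w}/\overline w],
\]
universally quantified over the pure and propositional variables. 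Observe that the minimum valuation of each critical $p_i$ is exactly the single nominal $\nomj_{p_i}$, reflecting the fact that no residuation is needed in the very simple Sahlqvist case.

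Next I would invoke Ackermann's Lemma (Lemma \ref{lemma:ackermann}) to eliminate each critical $p_i$ via the substitution $p_i \mapsto \nomj_{p_i}$ (using part 2) and each critical $q_i$ via $q_i \mapsto \cnomm_{q_i}$ (using part 1). The polarity conditions required by Ackermann follow from combining scattered-ness with very simplicity: each critical $p_i$ has its unique positive occurrence at the critical skeleton position (which gets absorbed into the lower bound $\nomj_{p_i} \leq p_i$) and at most one non-critical negative occurrence inside some $\gamma_k$ or $\delta_k$. A sign-propagation argument then shows that such a non-critical occurrence makes $\gamma_k$ antitone (resp.\ $\delta_k$ monotone) in $p_i$, which is exactly the polarity shape required on the right-hand side of $\nomj_{z_k} \leq \gamma_k$ (resp.\ left-hand side of $\delta_k \leq \cnomm_{w_k}$). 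The argument is dual for the $q_i$'s.

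After Ackermann elimination of $\overline p$ and $\overline q$, the antecedent of the meta-implication becomes
\[
\bigmetaand_j \bigl(\nomj_{z_j} \leq \gamma_j[\overline{\nomj_p}/\overline p, \overline{\cnomm_q}/\overline q]\bigr) \metaand \bigmetaand_j \bigl(\delta_j[\overline{\nomj_p}/\overline p, \overline{\cnomm_q}/\overline q] \leq \cnomm_{w_j}\bigr),
\]
and the consequent is $(\varphi \leq \psi)[\overline{\nomj_p}/\overline x, \overline{\cnomm_q}/\overline y, \overline{\nomj_z}/\overline z, \overline{\cnomm_w}/\overline w]$. Finally, I would use Corollary \ref{cor:skeletonadjoint} to reverse the first approximation on the auxiliary $\nomj_{z_j}, \cnomm_{w_j}$ variables: since $\varphi$ (resp.\ $\psi$) is a positive (resp.\ negative) definite skeleton, it preserves joins at its positive coordinates (resp.\ meets at its positive coordinates) and reverses meets (resp.\ joins) at its negative ones, so universal quantification over $\overline{\nomj_z}, \overline{\cnomm_w}$ together with the constraints $\nomj_{z_j} \leq \gamma_j[\ldots]$ and $\delta_j[\ldots] \leq \cnomm_{w_j}$ collapses to the inequality with $\gamma_j[\overline{\nomj_p}/\overline p, \overline{\cnomm_q}/\overline q]$ and $\delta_j[\overline{\nomj_p}/\overline p, \overline{\cnomm_q}/\overline q]$ plugged directly into the skeleton, yielding the pure inequality of the statement.

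The main technical obstacle will be the polarity verification for Ackermann in the presence of the $\overline\gamma, \overline\delta$ formulas: although the critical variables have been pinned down by scattered-ness, one must carefully track sign propagation through the $\gamma_k, \delta_k$ to ensure the correct polarity of any non-critical occurrence of $p_i$ or $q_i$. Everything else in the argument is a clean composition of the machinery already established in Section \ref{ssec:albadirect}.
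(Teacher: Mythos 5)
Your proof is correct and follows essentially the same route as the paper's: the paper simply invokes Corollary \ref{cor:albaoutput} (which packages your first-approximation-plus-Ackermann steps, observing that $\mathsf{Mv}(p)=\{\nomj_p\}$ and $\mathsf{Mv}(q)=\{\cnomm_q\}$ by very simplicity and scatteredness) and then reverses the first approximation via Lemma \ref{lemma:firstapprox}, exactly as you do via Corollary \ref{cor:skeletonadjoint}. The only slip is your claim that each $p_i$ has \emph{at most one} non-critical occurrence --- scatteredness only bounds the number of critical occurrences --- but since every non-critical occurrence of a critical variable automatically carries the polarity that the Ackermann Lemma requires, this does not affect the argument.
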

\begin{proof}
By Corollary \ref{cor:albaoutput}, the validity of  $(\varphi \leq \psi)[!\overline p / !\overline x, !\overline q / !\overline y, \overline\gamma / !\overline z, \delta / !\overline w]$ is equivalent to 
\[
\forall \overline\nomj_p, \overline \cnomm_q, \overline \nomj_z, \overline \cnomm_w 
\left(
\left(\bigmetaand_{i=1}^{n_z}\nomj_{z_i}\leq \gamma_i \metaand 
\bigmetaand_{i=1}^{n_w} \delta_i\leq \cnomm_{w_i}\right)\left[\overline{\nomj_p}/\overline{p}, \overline{\cnomm_q}/\overline{q}\right] \Rightarrow
\hfill
(\varphi \leq \psi)[!\overline{\nomj_p} / !\overline x, !\overline{\cnomm_q} / !\overline y, \overline{\nomj_z} / !\overline z, \overline{\cnomm_w} / !\overline w]
\right),
\]
because the sets $\mathsf{Mv}(p)$ and $\mathsf{Mv}(q)$ only contain $\nomj_p$ and $\cnomm_q$ for every $p$ and $q$. Notice that, since the input inequality is scattered, it is possible to rename nominals $\overline {\nomj_x}$ (resp.\ conominals $\overline {\cnomm_y}$) to $\overline{\nomj_p}$ (resp.\ $\overline{\cnomm_q}$). 
By Lemma \ref{lemma:firstapprox}, the pure quasi-inequality above is  equivalent to
\[
\begin{array}{rl}
&\forall \overline\nomj_p, \overline \cnomm_q, \overline \nomj_z, \overline \cnomm_w 
\left(
(\varphi \leq \psi)[!\overline{\nomj_p} / !\overline x, !\overline{\cnomm_q} / !\overline y, \overline{\gamma}[\overline{\nomj_p}/\overline{p}, \overline{\cnomm_q}/\overline{q}] / !\overline z, \overline\delta [\overline{\nomj_p}/\overline{p}, \overline{\cnomm_q}/\overline{q}], \overline{\cnomm_w} / !\overline w]
\right), \\
\mbox{i.e.} \quad & (\varphi \leq \psi)[!\overline p / !\overline x, !\overline q / !\overline y, \overline\gamma / !\overline z, \overline\delta / !\overline w][\overline{\nomj_p} / \overline p, \overline{\cnomm_q} / \overline q].
\end{array}
\]
\end{proof}

\begin{theorem}
\label{thm:invcorr}
Any $\langmeta$-{\inductiveinversecorrespondent} $\forall \overline \nomj, \overline\cnomm, \overline\nomi, \overline\cnomn \left( 
\bigmetaor_{i}^{n_j}
\theta_{\nomj_i}^-(\nomj_i)
\metaor
\bigmetaor_{i}^{n_m}
\theta_{\cnomm_i}^-(\cnomm_i)
\metaor
\varphi \leq \psi
\right)$ is equivalent to some $\langbase^*$-very simple Sahlqvist inequality. Moreover, such very simple Sahlqvist inequality can be effectively computed.
\end{theorem}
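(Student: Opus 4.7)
The plan is threefold: compact the inverse disjuncts, pass to the contrapositive to obtain a pure quasi-inequality matching the ALBA-output shape of a VSS, and invert the transformation in Lemma \ref{lemma:vss_substitute_nomcnom_var} by renaming pure variables to propositional variables.

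First, I would apply Lemma \ref{lemma:compaction} to each negative inverse disjunct, reducing each $\theta^-_{\nomj_i}(\nomj_i)$ to a single inequality $\nomj_i \nleq \xi_i$ and each $\theta^-_{\cnomm_i}(\cnomm_i)$ to $\zeta_i \nleq \cnomm_i$, where $\xi_i$ is a negative skeleton formula and $\zeta_i$ a positive skeleton formula in $\langbase$, both with free variables drawn from $\overline\nomj \cup \overline\cnomm \cup \overline\nomi \cup \overline\cnomn$. Rewriting the outer disjunction via contraposition turns the pre-inverse correspondent into the pure quasi-inequality
\[
\forall \overline\nomj, \overline\cnomm, \overline\nomi, \overline\cnomn \left(\bigmetaand_i (\nomj_i \leq \xi_i) \metaand \bigmetaand_i (\zeta_i \leq \cnomm_i) \Rightarrow \varphi \leq \psi\right).
\]

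Next, I would observe that this quasi-inequality has exactly the syntactic shape of the ALBA output of Corollary \ref{cor:albaoutput} specialised to a scattered very simple Sahlqvist $\langbase^*$-inequality: each $\xi_i$ being a negative skeleton means that $+\xi_i$ consists solely of PIA nodes (by the sign-flipping duality between skeleton and PIA), and dually for $-\zeta_i$, so the antecedents $\nomj_i \leq \xi_i$ and $\zeta_i \leq \cnomm_i$ play exactly the roles of $\nomj_{z_i} \leq \gamma_i[\mathsf{Mv}]$ and $\delta_i[\mathsf{Mv}] \leq \cnomm_{w_i}$, respectively, noting that for a VSS the minimal valuations $\mathsf{Mv}$ amount to the identity renaming of propositional variables to pure variables. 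Inverting this correspondence, I would replace each auxiliary pure variable in $\overline\nomi, \overline\cnomn$ by a fresh propositional variable of matching polarity, and take the $\xi_i$ and $\zeta_i$, with their free pure variables renamed accordingly, as the PIA formulas filling the non-critical positions of $\varphi \leq \psi$ that correspond to the main pure variables $\nomj_i, \cnomm_i$. Applying Lemma \ref{lemma:vss_substitute_nomcnom_var} to the resulting VSS then yields precisely our pure quasi-inequality, closing the equivalence.

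The main obstacle I anticipate is verifying that this construction genuinely delivers a formula in the class of very simple Sahlqvist $\langbase^*$-inequalities of Definition \ref{def:sahlqvist_and_vss}: the scattering of the skeleton $\varphi \leq \psi$ transfers directly from condition~3 of Definition \ref{def:inductiveinvcorr}, but the PIA status in $\langbase^*$ of the renamed $\overline\xi, \overline\zeta$ placed at the non-critical positions must be established by induction on the recursive structure of inverse disjuncts in Definition \ref{def:inversedisjunct}, again exploiting the sign-flipping duality between skeleton and PIA. I also need to check that no SRR node occurs on any critical branch, which follows because the auxiliary pure variables populate only the bare leaves in $\varphi, \psi$ and in the PIA formulas $\xi_i, \zeta_i$. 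Effective computability of the VSS is immediate from the explicit procedure in Lemma \ref{lemma:compaction} combined with the straightforward renaming of pure to propositional variables.
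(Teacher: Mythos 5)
Your overall route is the paper's: compact the inverse disjuncts via Lemma \ref{lemma:compaction}, contrapose to the pure quasi-inequality $\bigmetaand_i \nomj_i\leq\xi_i \metaand \bigmetaand_i \zeta_i\leq\cnomm_i \Rightarrow \varphi\leq\psi$, recognise it as the ALBA output of a very simple Sahlqvist $\langbase^*$-inequality, and undo Lemma \ref{lemma:vss_substitute_nomcnom_var}. However, three points need repair. First, the compacted formulas $\xi_i$, $\zeta_i$ are \emph{not} skeleton formulas of $\langbase^*$ in general: each Quantification step of Lemma \ref{lemma:compaction} plugs previously compacted formulas into the coordinates of a definite skeleton, so the output is an arbitrary alternation of skeleton and PIA connectives (e.g.\ $\Diamond\Box\Box\Diamondblack(\Diamond\Box\nomj\circ\Diamondblack\nomj)$ in Example \ref{eg:goranko_inv_to_vss}, where $\Box$ is PIA under the skeleton node $\Diamond$). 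Consequently the induction you propose, establishing ``PIA status'' of the renamed $\xi_i$, $\zeta_i$, would fail; fortunately it is also unnecessary, since Definition \ref{def:sahlqvist_and_vss} places no constraint on the formulas substituted at the non-critical positions $\overline\gamma$, $\overline\delta$. What must actually be checked is that every occurrence of the new propositional variables \emph{inside} the $\xi_i$, $\zeta_i$ is non-critical, and this is exactly what condition 1 of Definition \ref{def:inductiveinvcorr} (polarity of the auxiliary variables, preserved by the compaction procedure) delivers for the order-type $\varepsilon(p_\nomi)=1$, $\varepsilon(q_\cnomn)=\partial$.

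Second, ``inverting this correspondence'' is the crux of the proof and cannot be left implicit: the equivalence between the pure quasi-inequality and its rewriting with fresh propositional variables $p_\nomi$, $q_\cnomn$ (adding the conjuncts $\nomi\leq p_\nomi$ and $q_\cnomn\leq\cnomn$ and substituting the new variables into the $\xi_i$, $\zeta_i$) is an application of the Ackermann Lemma \ref{lemma:ackermann} read right-to-left, and it is precisely here that the polarity hypotheses of Definition \ref{def:inductiveinvcorr} are consumed; matching the shape of Corollary \ref{cor:albaoutput} alone does not certify that the two formulas are equivalent. Third, Definition \ref{def:inductiveinvcorr} explicitly allows a main variable $\nomj_i$ (resp.\ $\cnomm_i$) to coincide with an auxiliary one; your renaming scheme does not cover this case, whereas the paper absorbs it by replacing $\xi_i$ with $\xi_i\wedge p_{\nomj_i}$ (dually $\zeta_i\vee q_{\cnomm_i}$). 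With these repairs your argument coincides with the paper's proof.
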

\begin{proof}
Since the metalanguage of $\langmeta$ is classical, any inductive inverse correspondent can be equivalently rewritten as follows:
\begin{equation}
\label{eq:inversecorr_contrap}
\forall \overline \nomj, \overline\cnomm, \overline\nomi, \overline\cnomn \left( 
\bigmetaand_{i}^{n_j}\metanot
\theta_{\nomj_i}^-(\nomj_i)
\metaand
\bigmetaand_{i}^{n_m}\metanot
\theta_{\cnomm_i}^-(\cnomm_i)
\Rightarrow
\varphi \leq \psi
\right).
\end{equation}
By Lemma \ref{lemma:compaction}, each $\metanot\theta^-_{\nomj_i}$ (resp.\ $\metanot\theta^-_{\cnomm_i}$) is equivalent to some inequality $\nomj_i \leq \xi_i$ (resp.\ $\zeta_i \leq \cnomm_i$)
; therefore \eqref{eq:inversecorr_contrap} can be rewritten as
\begin{equation}
\label{eq:inversecorr_contrap_substsanteced}
\forall \overline \nomj, \overline\cnomm, \overline\nomi, \overline\cnomn \left( 
\bigmetaand_{i}^{n_j}\nomj_i\leq\xi_i
\metaand
\bigmetaand_{i}^{n_m}\zeta_i\leq\cnomm_i
\Rightarrow
\varphi \leq \psi
\right).
\end{equation}
The quasi inequality above can be recognized as the ALBA output of a very simple Sahlqvist inequality to which Lemma \ref{lemma:vss_substitute_nomcnom_var} applies. Indeed,
by introducing for each $\nomi$ in $\overline\nomi$ (resp.\ $\cnomn$ in $\overline\cnomn$) a propositional variable $p_\nomi$ (resp.\ $q_\cnomn$), consider now the following formula
\begin{equation}
\label{eq:inversecorr_introducedvars}
\begin{split}
\forall \overline{p_\nomi}, \overline{q_\cnomn}, \overline\nomj, \overline\cnomm, \overline\nomi, \overline\cnomn
\Big(
\bigmetaand_{i}^{n_j}\nomj_i\leq\xi_i\left[ \overline{p_\nomi}/\overline\nomi, \overline{q_\cnomn}/\overline\cnomn \right]
\metaand
\bigmetaand_{i}^{n_m}\zeta_i\left[ \overline{p_\nomi}/\overline\nomi, \overline{q_\cnomn}/\overline\cnomn \right]\leq\cnomm_i \metaand 
\bigmetaand_{\nomi \mbox{ in } \overline\nomi} \nomi \leq p_\nomi 
\metaand 
\bigmetaand_{\cnomn \mbox{ in } \overline\cnomn} q_\cnomn \leq \cnomn
\Rightarrow \\
\varphi \leq \psi
\Big).
\end{split}
\end{equation}
Thanks to the hypothesis on the polarity of the occurrences of each $\nomi$ and $\cnomn$ in each $\xi_i$ and $\zeta_i$ (cf.\ Definition \ref{def:inductiveinvcorr}), by Lemma \ref{lemma:ackermann} and Remark \ref{remark:definiteskeleton}, \eqref{eq:inversecorr_introducedvars} and \eqref{eq:inversecorr_contrap_substsanteced} are equivalent. In particular, since the first approximation (cf.\ Lemma \ref{lemma:firstapprox}) preserves the polarities of the variables in the inequality, also the procedure in Lemma \ref{lemma:compaction} does so; hence the hypothesis on the polarities of the pure variables in Definition \ref{def:inductiveinvcorr} are preserved. Thus, the conditions to apply Lemma \ref{lemma:ackermann} are satisfied.

Since some of the variables in $\overline\nomi$ (resp.\ $\overline\cnomn$) might also occur in $\overline\nomj$ (resp.\ $\overline\cnomm$), let us denote by $\overline\nomi'$ (resp.\ $\overline\cnomn'$) the sequences of (co)nominals in $\overline\nomi$ (resp.\ $\overline\cnomn$) which do not also occur in $\overline\nomj$ (resp.\ $\overline\cnomm$). For the pure variables $\nomj_i$ (resp.\ $\cnomm_i$) that occur also in $\overline\nomi$ (resp.\ $\overline\cnomn$), let $\xi_i'$ (resp.\ $\zeta_i'$) denote $\xi_i \wedge p_{\nomj_i}$ (resp.\ $\zeta_i \vee q_{\cnomm_i}$), and for the remaining $\nomj_j$ (resp.\ $\cnomm_j$) let $\xi_j' \coloneqq \xi_j$ (resp.\ $\zeta_j' \coloneqq \zeta_j$). Then, \eqref{eq:inversecorr_introducedvars} is equivalent to
\begin{equation}
\label{eq:inversecorr_introducedvars_purged}
\forall \overline{p_\nomi}, \overline{q_\cnomn}, \overline\nomj, \overline\cnomm, \overline\nomi', \overline\cnomn'
\left( \!
\left( \!\bigmetaand_{i}^{n_j}\nomj_i\leq\xi_i'
\metaand
\bigmetaand_{i}^{n_m}\zeta_i'\leq\cnomm_i\right)\!\left[ \overline{p_\nomi}/\overline\nomi', \overline{q_\cnomn}/\overline\cnomn' \right] \!\metaand 
\bigmetaand_{\nomi \mbox{ in } \overline\nomi'} \nomi \leq p_\nomi 
\metaand 
\bigmetaand_{\cnomn \mbox{ in } \overline\cnomn'} q_\cnomn \leq \cnomn
\Rightarrow
\varphi \leq \psi
\right).
\end{equation}
By Corollary \ref{cor:firstapprox_inductive}, \eqref{eq:inversecorr_introducedvars_purged} is equivalent to
\begin{equation}
\label{eq:inversecorr_contrap_noj_nom}
\forall \overline{p_\nomi}, \overline{q_\cnomn} \left(
\varphi \leq \psi
\right)
\left[\overline{
\xi_i'
/\nomj_i
},\overline{
\zeta_i' 
/\cnomm_i}
\right]
\left[
\overline{p_\nomi}/\overline\nomi, \overline{q_\cnomn}/\overline\cnomn
\right],
\end{equation}
which is a very simple Sahlqvist inequality for the order type $\varepsilon$ such that $\varepsilon(p_\nomi) = 1$ and $\varepsilon(q_\cnomn) = \partial$ for every $p_\nomi$ in $\overline{p_\nomi}$ and $q_\cnomn$ in $\overline{q_\cnomn}$.
\end{proof}

The proposition and the theorem above yield the following Corollary.
\begin{corollary}
\label{cor:inductive_to_vss}
Every inductive $\langbase$-inequality is equivalent to a very simple Sahlqvist $\langbase^\ast$-inequality.
\end{corollary}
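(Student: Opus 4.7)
The plan is simply to chain Proposition \ref{prop:inductive_inductiveinverse} and Theorem \ref{thm:invcorr}. Given an arbitrary inductive $\langbase$-inequality $\iota$, Proposition \ref{prop:inductive_inductiveinverse} produces an equivalent $\langmeta$-{\inductiveinversecorrespondent} $\xi$; Theorem \ref{thm:invcorr} then produces an (effectively computable) very simple Sahlqvist $\langbase^\ast$-inequality $\iota'$ equivalent to $\xi$. Composing the two equivalences gives $\iota \equiv \iota'$, which is the statement of the corollary.

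The only point requiring a brief check is that the two equivalences being chained are of the same semantic type, namely validity on the class of perfect LEs, so that they genuinely compose. This is immediate by inspection: the equivalence in Proposition \ref{prop:inductive_inductiveinverse} is obtained via Corollary \ref{cor:albaoutput} (which gives an equivalence on perfect LEs) followed by the $\mathsf{Flat}$ rewriting of Lemma \ref{lemma:flattification}, which is also an equivalence on perfect LEs; and the transformations in Theorem \ref{thm:invcorr} (contraposition in the meta-language, Lemma \ref{lemma:compaction}, Lemma \ref{lemma:ackermann}, and Lemma \ref{lemma:vss_substitute_nomcnom_var}) likewise preserve validity on perfect LEs.

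I do not anticipate any obstacle. The corollary is essentially a bookkeeping statement packaging the main content of Section \ref{sec:inverse}: given the ``forward'' direction established in Proposition \ref{prop:inductive_inductiveinverse} (namely, that applying ALBA followed by $\mathsf{Flat}$ to an inductive inequality lands in the pre-inverse correspondent fragment of $\langmeta$) together with the ``backward'' direction established in Theorem \ref{thm:invcorr} (namely, that every pre-inverse correspondent can be effectively re-expressed as a very simple Sahlqvist inequality in the fully residuated language), the corollary is immediate. If desired, one could also trace the construction explicitly to obtain an algorithm turning any inductive $\langbase$-inequality into its very simple Sahlqvist $\langbase^\ast$-counterpart, but no new argument is required.
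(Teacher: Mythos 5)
Your proposal is correct and is exactly the paper's own argument: the corollary is stated immediately after Theorem \ref{thm:invcorr} with the justification that it follows from Proposition \ref{prop:inductive_inductiveinverse} together with that theorem, i.e.\ precisely the chaining you describe. Your added check that both equivalences are semantic equivalences over perfect LEs, and hence compose, is a sensible (if implicit in the paper) piece of bookkeeping and raises no issues.
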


\begin{example}
\label{eg:goranko_inv_to_vss}
The following $\langmeta$ formula
\[
\forall \nomj, \cnomm(
\exists \nomi_1 \left(
\Diamond\nomi_1 \nleq \cnomm
\metaand
    \forall \cnomn_1 \left(
    \nomi_1 \nleq \Box\Box\cnomn_1
    \Rightarrow
        \exists \nomi_2 \left(
        \Diamondblack(\Diamond\nomi_2 \circ \Diamondblack \nomj) \nleq \cnomn_1
        \metaand
        \nomi_2 \leq \Box\nomj
        \right)
    \right)
\right) 
\metaor \nomj \leq \cnomm
)
\]
satisfies all the conditions in Definition \ref{def:inductiveinvcorr}.\footnote{To be precise, the subformula
\[
\exists \nomi_2 \left(
\Diamondblack(\Diamond\nomi_2 \circ \Diamondblack \nomj) \nleq \cnomn_1
\metaand
\nomi_2 \leq \Box\nomj
\right)
\]
should be rewritten as the (trivially) equivalent formula
\[
\exists \nomi_2,\nomi_3 \left(
\Diamondblack(\Diamond\nomi_2 \circ \Diamondblack \nomi_3) \nleq \cnomn_1
\metaand
\nomi_2 \leq \Box\nomj
\metaand
\nomi_3 \leq \nomj
\right)
\]
to be in the shape described in Definition \ref{def:inductiveinvcorr}.
}
The procedure in Lemma \ref{lemma:compaction} computes the following three equivalent rewritings
\smallskip

{{\centering
\begin{tabular}{rl}
& $\forall \nomj, \cnomm(
\exists \nomi_1 \left(
\Diamond\nomi_1 \nleq \cnomm
\metaand
    \forall \cnomn_1 \left(
    \nomi_1 \nleq \Box\Box\cnomn_1
    \Rightarrow
    \Diamondblack(\Diamond\Box \nomj \circ \Diamondblack \nomj) \nleq \cnomn_1
    \right)
\right) 
\metaor \nomj \leq \cnomm
)$\\
iff & $\forall \nomj, \cnomm(
\exists \nomi_1 \left(
\Diamond\nomi_1 \nleq \cnomm
\metaand
\nomi_1 \leq \Box\Box\Diamondblack(\Diamond\Box \nomj \circ \Diamondblack \nomj)
\right) 
\metaor \nomj \leq \cnomm
)$\\
iff & $\forall \nomj, \cnomm(
\Diamond\Box\Box\Diamondblack(\Diamond\Box \nomj \circ \Diamondblack \nomj) \nleq \cnomm \metaor \nomj \leq \cnomm
)$.
\end{tabular}
\par}}
\smallskip

\noindent The procedure described in Theorem \ref{thm:invcorr} introduces a new variable $p_\nomj$, as $\nomj$ does not occur as main variable in any inverse disjunct. By Lemma \ref{lemma:ackermann}, the formula above is equivalent to
\[
\forall p_\nomj \forall \nomj, \cnomm(
\Diamond\Box\Box\Diamondblack(\Diamond\Box p_\nomj \circ \Diamondblack p_\nomj) \leq \cnomm 
\metaand
\nomj \leq p_\nomj
\Rightarrow \nomj \leq \cnomm
),
\]
and, by Lemma \ref{lemma:ackermann}, it is equivalent to the following very simple Sahlqvist $\langbase^*$-inequality
\[
p_\nomj \leq \Diamond\Box\Box\Diamondblack(\Diamond\Box p_\nomj \circ \Diamondblack p_\nomj).
\]
\end{example}

\subsection{Crypto-inductive inequalities}
\label{sec:cryptoinductive}

In Section \ref{ssec:inductiveinversecorrespondents}, we have shown how {\inductiveinversecorrespondent} (cf.\  Definition \ref{def:inductiveinvcorr}) are equivalent to very simple Sahlqvist $\langbase^*$-inequalities (see Definition \ref{def:fully_residuated_language}). However, it is well known (see for instance \cite{CoPan22}) that very simple Sahlqvist $\langbase^*$-inequalities  are more expressive than inductive $\langbase$-inequalities, in the sense that, via semantic equivalence, inductive $\langbase$-inequalities correspond to a proper subclass of  very simple Sahlqvist $\langbase^\ast$-inequalities. In the present section, we generalize the characterization of crypto-inductive $\mathcal{L}_{\mathrm{DLE}}$-inequalities  (cf.~\cite[Section 3]{CoPan22}) to the setting of LE-logics, characterizing a class of very simple Sahlqvist $\langbase^\ast$-inequalities which can be effectively shown, via application of ALBA-rules, to be semantically equivalent to inductive $\langbase$-inequalities.

\begin{definition}
\label{def:adj:res:conservative}
A node labelled with $+g$ (resp.\ $-f$)
 is \emph{residuation-conservative in the $i$-th coordinate} for $1 \leq i \leq n_g$ ($1 \leq i \leq n_f$) if the residual $g_i^{\flat}$ of $g$ (resp.\ $f_i^{\sharp}$ of $f$) belongs to $\langbase$.
\end{definition}

\begin{definition}
\label{def:mvtree}
For any order type $\varepsilon$, any strict order on variables $\Omega$, and for any leaf $l$ labelled with a variable $p$ of a signed generation tree $\pm s$, let $\mathsf{MVTree}^\varepsilon_\Omega(l)$ denote the largest subtree $\pm t$ containing $l$ such that:
\begin{enumerate}
    \item the path to $l$ contains only negative (resp.\ positive) nodes labelled by connectives in $\mathcal{F}^\ast$ (resp.\ $g\in\mathcal{G}^\ast$) which are residuation conservative with respect to the coordinate that contains $l$,
    \item no variable $q >_\Omega p$ and occurs in $\pm t$, as well as any $\varepsilon$-critical leaf,
    \item the connective labelling the topmost node in the tree is in $(\mathcal{F}^*\cup\mathcal{G}^*) \setminus (\mathcal{F}\cup\mathcal{G})$,
    \item the topmost node in $\mathsf{MVTree}^\varepsilon_\Omega(l)$ does not lie in the path to the leaf $l'$ in any other $\mathsf{MVTree}^\varepsilon_\Omega(l')$ with $l'$ occurring more on the left than $l$.
\end{enumerate}
\end{definition}

Informally, the definition above intends to identify the (sub)formulas $\mathsf{LA}(\alpha_i)(j_{x_i}, \overline z)$ (resp.\ $\mathsf{RA}(\beta_i)(m_{y_i}, \overline z)$) in the minimal valuations $\mathsf{Mv}(p)$ or $\mathsf{Mv}(q)$ which are substituted by the Ackermann Lemma in an ALBA run. 
For instance, consider the very simple Sahlqvist inequality equivalent to the non-distributive version of Frege's inequality from Example \ref{eg:alba_fregeLE}, namely $
p_{\nomj_1} \leq p_{\nomj_2} \rightharpoonup (p_{\nomj_3} \rightharpoonup ((p_{\nomj_3} \bullet p_{\nomj_2}) \bullet (p_{\nomj_3} \bullet p_{\nomj_1})))$. Given the order $p_{\nomj_3} <_\Omega p_{\nomj_2} <_\Omega p_{\nomj_1}$ and order type $\varepsilon(p) = \varepsilon(q) = \varepsilon(r) = 1$, the $\mathsf{MVTree}_\Omega^\varepsilon$ of both non-critical occurrences of $p_{\nomj_3}$ contains just $p_{\nomj_3}$, the $\mathsf{MVTree}_\Omega^\varepsilon$ of the non-critical occurrence of $p_{\nomj_2}$ is $p_{\nomj_3} \bullet p_{\nomj_2}$, and the $\mathsf{MVTree}_\Omega^\varepsilon$ of the non-critical occurrence of $p_{\nomj_1}$ is $(p_{\nomj_3} \bullet p_{\nomj_2}) \bullet (p_{\nomj_3} \bullet p_{\nomj_1})$. These trees correspond to the minimal valuations of the variables in the original inductive inequality in Example \ref{eg:alba_fregeLE}, namely $\mathsf{Mv}(p)=\{\nomj_3\}, \mathsf{Mv}(q) = \{ \nomj_3 \bullet \nomj_2 \}$, and $\mathsf{Mv}(r) = \{ ({\nomj_3} \bullet {\nomj_2}) \bullet ({\nomj_3} \bullet {\nomj_1}) \}$.

More in general, if an inequality is obtained via ALBA by transforming an inductive $\langbase$-inequality into a very simple Sahlqvist $\langbase^\ast$-inequality {\inductiveinversecorrespondent} (cf.~Corollary \ref{cor:inductive_to_vss}), and 
 $l$ is a non-critical leaf, $\mathsf{MVTree}_\Omega^\varepsilon(l)$ detects a  subformula which possibly corresponds to one of the (parts of the) minimal valuations $\mathsf{Mv}(p)$ or $\mathsf{Mv}(q)$ which is substituted by the Ackermann Lemma in an ALBA run. Indeed, the minimal valuation for such a positive (resp.\ negative) variable $p$ (resp.\ $q$) is the join (resp.\ meet) of adjoints of definite PIAs (see the definition of $\mathsf{Mv}$ right above Lemma \ref{lemma:ackermann}). Since every (co)nominal is eventually substituted by a variable (cf.~\eqref{eq:inversecorr_introducedvars} in Theorem \ref{thm:invcorr}) the $\mathsf{MVTree}_\Omega^\varepsilon$ of a leaf $l$ labelled by a variable $r$ ideally corresponds to the formula $\mathsf{LA}(\alpha_i)(j_{x_i}, \overline z)$ (resp.\ $\mathsf{RA}(\beta_i)(m_{y_i}, \overline z)$) where $j_{x_i}$ (resp.\ $m_{y_i}$) is the (co)nominal which is substituted by $r$.

Since the syntax tree of the adjoint of a definite PIA (cf.~Definition \ref{def:RA_and_LA}) contains a path of nodes which are residuals of nodes in the PIA formula, each $\mathsf{MVTree}_\Omega^\varepsilon(l)$ must contain a path of nodes which are residuation conservative, thus motivating Condition 1 of the definition above. Furthermore, since every definite PIA subformula of an inductive inequality contains a unique critical variable, which is $\Omega$-larger than all the (non-critical) variables occurring in the given PIA subformula, the minimal valuation of such critical variable cannot receive substitutions of minimal valuations of variables which are $\Omega$-larger than the given variable by means of the Ackermann Lemma, which justifies Condition 2. Condition 3 guarantees that the nodes which already belong to $\langbase$ are included in $\mathsf{MVTree}_\Omega^\varepsilon(l)$ only if they are in the scope of a node which does not belong to $\langbase$. Notice that Condition 2 ensures that, if $l$ and $l'$ are leaves labelled by  different variables, the path from the root of $\mathsf{MVTree}_\Omega^\varepsilon(l)$ to $l$ does not intersect the path from the root of $\mathsf{MVTree}_\Omega^\varepsilon(l')$ to $l'$; indeed, if the two paths intersected, both trees would contain both leaves, contradicting the fact that, in every very simple Sahlqvist $\langbase^\ast$-inequality which is ALBA-equivalent to some inductive one, the variable labelling one of the two leaves is $\Omega$-larger than the other.
However, Condition 2  does not guarantee  this non-intersection property to hold for leaves labelled by the same variable. This is guaranteed by Condition 4 which establishes an arbitrary order between the leaves labelled by the same variables. Nonetheless, in principle any order (other than the ordering from left-to-right of the leaves) can work, and one has been picked just to have a precise definition.

\begin{definition}\label{def:Crypto:inductive}
An $\langbase^{\ast}$-inequality $\phi \leq \psi$ is  \emph{crypto $\langbase$-inductive}\footnote{The reference to the base language $\langbase$ is necessary, since different base languages can have the same fully residuated language $\langbase^{\ast}$, and moreover, it is easy to find examples of inequalities that are crypto inductive w.r.t.~one base language but not w.r.t.~another (cf.~Remark \ref{rmk:crypto_depends_on_base_language}).} if it is a  very simple $\varepsilon$-Sahlqvist $\langbase^{\ast}$-inequality and, in the signed generation trees $+\phi$ and $- \psi$:
\begin{enumerate}
    \item All $\varepsilon$-critical branches contain only signed connectives from $\langbase$,
    \item There exists a strict partial order $\Omega$ on the propositional variables occurring in $\phi \leq \psi$, such that every occurrence of a connective in $(\mathcal{F}^*\cup\mathcal{G}^*) \setminus (\mathcal{F}\cup\mathcal{G})$ lies in the path to a non-critical leaf $l$ from the root of $\mathsf{MVTree}_\Omega^\varepsilon(l)$, and it contains a $\Omega$-largest variable in its scope.
\end{enumerate}
\end{definition}

The order $\Omega$ on the variables of a crypto $\langbase$-inductive inequality induces a rather natural strict order on the non-critical leaves which is defined as the smallest strict order $\prec$ satisfying the following conditions: if two leaves $l_1$ and $l_2$ are labelled by two (different) variables $v_1$ and $v_2$ such that $v_1 <_\Omega v_2$, then $l_1 \prec l_2$; if two different leaves $l_1$ and $l_2$ are labelled by the same variable and $l_1$ occurs in $\mathsf{MVTree_\Omega^\varepsilon}(l_2)$, then $l_1 \prec l_2$.

\begin{remark}
\label{rmk:crypto_depends_on_base_language}
Given two different LE languages $\mathcal{L}_1$ and $\mathcal{L}_2$ such that $\mathcal{L}_1^\ast = \mathcal{L}_2^\ast$, an inequality might be crypto $\mathcal{L}_1$-inductive, but not crypto $\mathcal{L}_2$-inductive. Consider for instance $\mathcal{L}_1$ with connectives $\mathcal{F}_1 = \{ \circ \}$, $\mathcal{G}_1 = \{ \star \}$ and $\mathcal{L}_2$ with connectives $\mathcal{F}_1 = \{ \starback \}$, $\mathcal{G}_1 = \{ \circback \}$.\footnote{In $\mathcal{L}_1^\ast$, $\circfor$ and $\circback$ are the adjoints of $\circ$, and $\starfor$ and $\starback$ are the adjoints of $\star$. In $\mathcal{L}_2^\ast$, $\circfor$ and $\circ$ are the adjoints of $\circback$, and $\starfor$ and $\star$ are the adjoints of $\starback$.}
The inequality $p \starback q \leq p \circback q$ is plainly crypto $\mathcal{L}_2$-inductive, but it is not crypto $\mathcal{L}_1$-inductive.
\end{remark}

\begin{prop}
\label{prop:cryptotoinductive}
	Every crypto $\langbase$-inductive inequality is equivalent to an inductive $\langbase$-inequality.
\end{prop}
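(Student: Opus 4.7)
The plan is to reverse the ALBA transformation of Corollary \ref{cor:inductive_to_vss}: the two conditions of Definition \ref{def:Crypto:inductive} are designed precisely so that the non-$\langbase$ connectives in $\phi \leq \psi$ can be residuated back to the opposite side of the inequality, recovering the PIA structure of an inductive $\langbase$-inequality. I would proceed by well-founded induction on the number $N$ of occurrences in $\phi\leq\psi$ of connectives in $(\mathcal{F}^*\cup\mathcal{G}^*)\setminus(\mathcal{F}\cup\mathcal{G})$. If $N=0$, then $\phi\leq\psi$ is itself a very simple $\varepsilon$-Sahlqvist $\langbase$-inequality, hence inductive.

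For the inductive step, pick a non-critical leaf $l$ that is $\prec$-minimal among those for which $t := \mathsf{MVTree}_\Omega^\varepsilon(l)$ is non-empty (if no such leaf exists then $N=0$). Let $v^*$ be the $\Omega$-largest variable in the scope of the root of $t$, whose existence is guaranteed by condition 2 of Definition \ref{def:Crypto:inductive}. By conditions 1 and 3 of Definition \ref{def:mvtree}, each node on the path from the root of $t$ to $l$ is labelled by a residuation-conservative connective in $(\mathcal{F}^*\cup\mathcal{G}^*)\setminus(\mathcal{F}\cup\mathcal{G})$. Inverting the recursive clauses of Definition \ref{def:RA_and_LA} node-by-node along this path produces a definite $\langbase$-PIA formula $\alpha$ with $v^*$ as its $\varepsilon$-critical leaf and the $\Omega$-smaller variables occurring in $t$ as parameters, such that $t$ equals $\mathsf{LA}(\alpha)$ or $\mathsf{RA}(\alpha)$ applied to the variable $r$ labelling $l$ and the $\Omega$-smaller variables. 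By Lemma \ref{lemma:piaadjoint}, $\phi \leq \psi$ is then semantically equivalent to the inequality obtained by replacing $t$ at the position of $l$ by $v^*$ and the critical occurrence of $v^*$ by $\alpha$ applied to a fresh variable. After uniformly performing this substitution for all non-critical leaves labelled by the same variable as $l$ whose MV Trees share the same $v^*$, the count $N$ strictly decreases while the crypto-inductive shape is preserved (with $\Omega$ restricted to the remaining variables), so the induction hypothesis applies.

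The main technical obstacle is the coherent merging of MV Trees of distinct non-critical leaves that happen to be labelled by the same variable and to share the same target $v^*$: their individual PIAs must be combined into a single scattered PIA substituted for a single fresh variable, which is made possible by condition 4 of Definition \ref{def:mvtree}, whose arbitrary linear ordering ensures that these MV Trees do not overlap and can be combined as a finite meet or join in accordance with Notation \ref{notation:compactinductive}. A secondary subtlety is that the inversion of Definition \ref{def:RA_and_LA} must be well-defined: this follows from the fact that each clause is deterministic given whether the current node is $+f$ or $-g$ and which coordinate leads to the critical leaf, and the residuation-conservativity along the path ensures that the base-language counterpart $f$ or $g$ of each $f^\sharp$ or $g^\flat$ occurring in $t$ is always available for reconstructing $\alpha$.
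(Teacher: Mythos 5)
Your core mechanism is the right one and is essentially the paper's: process the $\mathsf{MVTree}$s in $\prec$-order, use residuation-conservativity to invert $\mathsf{LA}$/$\mathsf{RA}$ and recover a definite $\langbase$-PIA $\alpha$, and substitute it into the critical position of the corresponding variable. However, the inductive packaging does not close, for two reasons. First, the substitution as you state it is not an equivalence: replacing $t$ at the position of $l$ by $v^*$ while replacing the critical occurrence of $v^*$ by $\alpha(u)$ leaves $v^*$ and the fresh variable $u$ unlinked under the universal quantification. What is needed is to replace $t$ by the fresh variable $u$ itself and the critical occurrence of $v^*$ by $\alpha(u)$, thereby eliminating $v^*$; this is an application of the Ackermann lemma (Lemma \ref{lemma:ackermann}), not only of the adjunction Lemma \ref{lemma:piaadjoint}, and it silently relies on the skeleton being monotone in the position of $t$ (Corollary \ref{cor:skeletonadjoint}). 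Second, and more seriously, even with the corrected substitution the intermediate inequality is no longer crypto $\langbase$-inductive --- it is not even very simple Sahlqvist, since the fresh variable's critical occurrence now lies under the PIA nodes of $\alpha$ rather than directly under skeleton. For instance, one step on $v_3 \leq v_2 \rightharpoonup[v_1\rightharpoonup((v_1\bullet v_2)\bullet(v_1\bullet v_3))]$ yields $v_3\leq(v_1\rightharpoonup q)\rightharpoonup[v_1\rightharpoonup(q\bullet(v_1\bullet v_3))]$, where the critical branch of $q$ passes through the PIA node $+\rightharpoonup$. Hence the induction hypothesis cannot be applied to the intermediate object, and the base case ``$N=0$ implies very simple Sahlqvist, hence inductive'' is false for what you reach at the end of the recursion (the output is in general a \emph{proper} inductive inequality, e.g.\ Frege's axiom).

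To repair this you must either strengthen the statement proved by induction so that it covers these partially processed shapes, or do what the paper does: extract all the $\mathsf{MVTree}$s in $\prec$-order into a single quasi-inequality $\bigmetaand_j p_{i_j}\leq\alpha_j(v_{i_j})\metaand\bigmetaand_j\beta_j(v_{k_j})\leq q_{k_j}\Rightarrow(\phi'\leq\psi')[\overline p/!\overline x,\overline q/!\overline y]$, residuate all the side conditions at once, perform one global Ackermann elimination of the original variables via minimal valuation sets, and then \emph{verify directly} that the result is $(\Omega',\varepsilon')$-inductive for explicitly defined $\Omega'$ and $\varepsilon'$ --- this final verification (checking both conditions of Definition \ref{Inducive:Ineq:Def}) is entirely absent from your argument. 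Two smaller points: your measure $N$ does not strictly decrease when the selected $\mathsf{MVTree}$ is a bare variable, so termination should instead be measured by the number of remaining $\equiv$-classes of non-critical leaves; and when several non-critical leaves are labelled by the same variable but have \emph{distinct} $\mathsf{MVTree}$s, each class needs its own fresh variable, with the critical occurrence receiving the meet (or join) of the corresponding PIAs, rather than ``a single fresh variable'' as you write.
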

\begin{proof}
Let $\phi \leq \psi$ be a crypto $\langbase$-inductive inequality and let $\varepsilon$ and $\Omega$ be an order type and a strict partial order satisfying Definition \ref{def:Crypto:inductive}. 

Consider now the equivalence relation $\equiv$ on the non-critical leaves of the inequality defined as follows: $l \equiv l'$ iff $\mathsf{MVTree_\Omega^\varepsilon}(l)=\mathsf{MVTree_\Omega^\varepsilon}(l')$. Item 2 of Definition \ref{def:Crypto:inductive} guarantees that $\equiv$ is well defined w.r.t.~$\prec$, in the sense that $l_1\prec l_2$ whenever $l_1' \prec l_2'$ for some $l_1'\equiv l_1$ and  $l_2'\equiv l_2$. This also guarantees that  $\equiv$-equivalent leaves  
are labelled by the same variable.\footnote{Indeed, the variable labelling $l_1$ in $\mathsf{MVTree}_\Omega^\varepsilon(l_1)$ is the $\Omega$-largest variable in $\mathsf{MVTree}_\Omega^\varepsilon(l_1)$ whose existence is guaranteed by item 2 of Definition \ref{def:Crypto:inductive}, since, otherwise, item 2 of Definition \ref{def:mvtree} would be violated. The same argument can be applied to $l_2$ and $\mathsf{MVTree}_\Omega^\varepsilon(l_2)$; hence, the variables labelling $l_1$ and $l_2$ must coincide whenever $l_1 \equiv l_2$.}
 
Let us now consider the set $\{[l_1],\ldots,[l_n]\}$ of $\equiv$-classes, and, without loss of generality, let us assume that $i < j$ implies $l_i \prec l_j$. Following this order, for each set of leaves $[l_i]$ apply the Ackermann Lemma (cf.\ Lemma \ref{lemma:ackermann}) to extract all the occurrences of $\mathsf{MVTree}_\Omega^\varepsilon(l_i)$ from the inequality. More specifically, at each step the inequality $(\varphi'\leq\psi')[{\mathsf{MVTree}_\Omega^\varepsilon(l_i)}/!\overline{z}]$, where $!\overline z$ is a sequence of unique placeholder variables for the occurrences of $\mathsf{MVTree}_\Omega^\varepsilon(l_i)$, can be equivalently rewritten as $p_i \leq \mathsf{MVTree}_\Omega^\varepsilon(l_i) \Rightarrow (\varphi'\leq\psi')[p_i/!\overline{z}]$ or $\mathsf{MVTree}_\Omega^\varepsilon(l_i) \leq q_i \Rightarrow (\varphi'\leq\psi')[q_i/!\overline{z}]$, depending on whether the topmost symbol of $\mathsf{MVTree}_\Omega^\varepsilon(l_i)$ has a positive or negative sign in the signed generation tree.\footnote{Note that, by item 2 of Definition \ref{def:Crypto:inductive}, all occurrences of $\mathsf{MVTree}_\Omega^\varepsilon(l_i)$ have the same sign in the given crypto inequality.} 
The application of the steps discussed above equivalently transforms  the input inequality into a quasi inequality of the following form:
\begin{equation}
\label{eqn:crypto_extraction}
\bigmetaand_j p_{i_j} \leq \alpha_j(v_{i_j}) \metaand \bigmetaand_j \beta_j(v_{k_j}) \leq q_{k_j} \Rightarrow (\phi'\leq \psi')[\overline{p}/!\overline{x}, \overline{q}/!\overline{y}],
\end{equation}
where $v_{h}$ is the variable labelling every node in $[l_{h}]$, and
each $\alpha_j(v_{i_j})$ (resp.\ $\beta_j(v_{k_j})$) is a positive (resp.\ negative) definite PIA (cf.\ Notation \ref{notation:compactinductive}), by item 1 of Definition \ref{def:mvtree}, which can contain connectives in $(\mathcal{F}^*\cup\mathcal{G}^*) \setminus (\mathcal{F}\cup\mathcal{G})$ only in the path of PIA nodes from the root to the leaf $l_{i_j}$ (resp.\ $l_{k_j}$), and nowhere else by items 2 and 4 of Definition \ref{def:mvtree}. By applying adjunction to each $\alpha_j$ and $\beta_j$ (cf.~Definition \ref{def:RA_and_LA}), the quasi-inequality above is equivalent to the following\footnote{In \eqref{eqn:crypto_ackermann_shape}, we adopt the convention that $a \leq^1 b$ iff $a \leq b$, and $a \leq^\partial b$ iff $b \leq a$, for all terms $a$ and $b$. Note that, by item 1 of Definition \ref{def:Crypto:inductive}, each $v_{i_j}$ (resp.\ $v_{k_j}$) occurs in $\alpha_j$ (resp.\ $\beta_j$) with sign $\varepsilon^\partial(v_{i_j})$ (resp.\ $\varepsilon^\partial(v_{k_j})$). To justify the superscripts in the  inequalities of \eqref{eqn:crypto_ackermann_shape}, consider the following case (the other three cases being similar). If $\varepsilon(v_{i_j}) = 1$, then $v_{i_j}$ occurs negatively in $\alpha_j$. Since the root of $\alpha_j$ is positive in the original inequality, $\alpha_j$ is antitone in $v_{i_j}$; hence the adjunction is a Galois connection which reverses the order.}
\begin{equation}
\label{eqn:crypto_ackermann_shape}
\bigmetaand_j \mathsf{LA}(\alpha_j)(p_{i_j}) \leq^{\varepsilon^\partial(v_{i_j})} v_{i_j} \metaand \bigmetaand_j v_{k_j} \leq^{\varepsilon(v_{k_j})} \mathsf{RA}(\beta_j)(q_{k_j}) \Rightarrow (\phi'\leq \psi')[\overline{p}/!\overline{x}, \overline{q}/!\overline{y}].
\end{equation}
Note that every $\mathsf{LA}(\alpha_j)$ and every $\mathsf{RA}(\beta_j)$ contains only nodes in $\langbase$.

Similarly to the case in which  ALBA is used to eliminate proposition variables in favour of nominal and conominal variables, we proceed to eliminate the original variables $v$ in favour of the new variables $p$ and $q$ by computing a minimal valuation set $M_h$ for each $v_h$ depending on $\varepsilon(v_h)$. More precisely, for each $v_h$, $M_h := \{ \mathsf{LA}(\alpha_j)(p_{i_j}) : v_{i_j} = v_h \}$ if $\varepsilon(v_h) = 1$, and $M_h := \{ \mathsf{RA}(\beta_j)(q_{k_j}) : v_{k_j} = v_h \}$ otherwise. Hence, \eqref{eqn:crypto_ackermann_shape} can  be  equivalently rewritten as
$\bigmetaand_i \bigvee^{\varepsilon(v_i)} M_i \leq^{\varepsilon(v_i)} v_i \Rightarrow (\phi'\leq \psi')[\overline{p}/!\overline{x}, \overline{q}/!\overline{y}]$. By Ackermann Lemma (cf.~Lemma \ref{lemma:ackermann}), the latter inequality can be  equivalently rewritten as $(\phi'\leq \psi')[\overline{p}/!\overline{x}, \overline{q}/!\overline{y},\overline{\bigvee^{\varepsilon(v)} M}/\overline v]$, 
which is an $(\Omega', \varepsilon')$-inductive $\langbase$-inequality, where $\varepsilon'(p_{i_j}) = \varepsilon^\partial(v_{i_j})$, $\varepsilon'(q_{k_j}) = \varepsilon(v_{k_j})$, and $\Omega'$ is defined as follows:  
$r <_{\Omega'} r'$ iff $l \prec l'$ with $l$ (resp.\ $l'$) being the leaf such that $\mathsf{MVTree}_\Omega^\varepsilon(l)$ (resp.\ $\mathsf{MVTree}_\Omega^\varepsilon(l')$) is extracted in \eqref{eqn:crypto_extraction} using $r$ (resp.\ $r'$). The  definition of $\varepsilon'$ given above ensures that the critical occurrences of each variable $p_{i_j}$ and $q_{k_j}$  are those occurring in  $\mathsf{LA}(\alpha_j)(p_{i_j})$ and $\mathsf{RA}(\beta_j)(q_{k_j})$ which are terms belonging to some minimal valuation set, and, therefore, they occur in the final inequality. Hence, item 1 of Definition \ref{Inducive:Ineq:Def} is satisfied, as each $\mathsf{LA}(\alpha_j)(p_{i_j})$ and $\mathsf{RA}(\beta_j)(q_{k_j})$ replaces a variable whose ancestors are only skeleton nodes, and in each $\mathsf{LA}(\alpha_j)(p_{i_j})$ and, $\mathsf{RA}(\beta_j)(q_{k_j})$, the path from the root to $p_{i_j}$ and $q_{k_j}$ contains only definite PIA nodes. The fact that the order $\prec$ is strict guarantees that  item 2 of Definition \ref{Inducive:Ineq:Def} is also satisfied.
\end{proof}

\begin{example}
\label{eg:vssaxiomk_to_inductive}
The inequality $\Box(\Diamondblack v_1 \backslash v_2) \leq v_1 \backslash \Box v_2$ is $(\Omega, \varepsilon)$-crypto inductive for the language $\langbase$ of the full Lambek calculus extended with a unary operator $\Box$ having left residual $\Diamondblack$, for $\varepsilon(v_1) = 1$ and $\varepsilon(v_2) = \partial$ and any $\Omega$. The $\mathsf{MVTree}_\Omega^\varepsilon$ of the occurrence of $p$ on the left side is $\Diamondblack p$, while that of the occurrence of $q$ on the left side is just $q$. Therefore, the procedure in Proposition \ref{prop:cryptotoinductive} yields
\smallskip

{{\centering
\begin{tabular}{rl}
& $\Box(\Diamondblack v_1 \backslash v_2) \leq v_1 \backslash \Box v_2$ \\
iff & $\Diamondblack v_1 \leq p \Rightarrow \Box(p \backslash v_2) \leq v_1 \backslash \Box v_2$\\
iff & $\Diamondblack v_1 \leq p \metaand q \leq v_2 \Rightarrow \Box(p \backslash q) \leq v_1 \backslash \Box v_2$ \\ 
iff & $v_1 \leq \Box p \metaand q \leq v_2 \Rightarrow \Box(p \backslash q) \leq v_1 \backslash \Box v_2$\\
iff & $\Box(p \backslash q) \leq \Box p \backslash \Box q$,
\end{tabular}
\par}}
\smallskip

\noindent i.e., the non-distributive version of axiom K for classical normal modal logic.
\end{example}

\begin{example}
\label{eg:crypto_frege}
The inequality $v_3 \leq v_2 \rightharpoonup \big[ v_1 \rightharpoonup \big( (v_1 \bullet v_2) \bullet (v_1 \bullet v_3) \big) \big]$ in the language of Example \ref{eg:alba_fregeLE} is $(\Omega,\varepsilon)$-crypto inductive for $\varepsilon(v_1)=\varepsilon(v_2)=\varepsilon(v_3)=1$, and $v_1 <_\Omega v_2 <_\Omega v_3$. The signed generation trees $+v_1$, $+(v_1 \bullet v_2)$, and $+((v_1 \bullet v_2) \bullet (v_1 \bullet v_3))$, are the trees $\mathsf{MVTree}_\Omega^\varepsilon(l)$ of the non critical occurrences of $v_1$, $v_2$, and $v_3$, respectively. The procedure in Proposition \ref{prop:cryptotoinductive} computes
\smallskip

{{\centering 
\begin{tabular}{rl}
& $v_3 \leq v_2 \rightharpoonup \big[ v_1 \rightharpoonup \big( (v_1 \bullet v_2) \bullet (v_1 \bullet v_3) \big) \big]$ \\
iff & $v_1 \leq p \Rightarrow v_3 \leq v_2 \rightharpoonup \big[ v_1 \rightharpoonup \big( (p \bullet v_2) \bullet (p \bullet v_3) \big) \big]$\\ 
iff & $v_1 \leq p \metaand p \bullet v_2 \leq q \Rightarrow v_3 \leq v_2 \rightharpoonup \big[ v_1 \rightharpoonup \big( q \bullet (p \bullet v_3) \big) \big]$ \\
iff & $v_1 \leq p \metaand p \bullet v_2 \leq q \metaand q \bullet (p \bullet v_3) \leq r \Rightarrow v_3 \leq v_2 \rightharpoonup (v_1 \rightharpoonup r)$ \\
iff & $v_1 \leq p \metaand v_2 \leq p \rightharpoonup q \metaand v_3 \leq p \rightharpoonup (q \rightharpoonup r) \Rightarrow v_3 \leq v_2 \rightharpoonup ( v_1 \rightharpoonup r )$ \\
iff & $ p \rightharpoonup (q \rightharpoonup r) \leq (p \rightharpoonup q) \rightharpoonup (p \rightharpoonup r)$, \\
\end{tabular}
\par}}
\smallskip

\noindent i.e., the non-distributive version of Frege's axiom for propositional logic.
\end{example}

\begin{lemma}
\label{lemma:inductivetocrypto}
Every $\langbase$-inductive inequality is equivalent to some crypto $\langbase$-inductive inequality.
\end{lemma}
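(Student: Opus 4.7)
My plan is to leverage Corollary \ref{cor:inductive_to_vss}, which already gives that every $\langbase$-inductive inequality is equivalent to some very simple Sahlqvist $\langbase^\ast$-inequality; it then remains to verify that the very simple Sahlqvist inequality produced by the explicit procedure underlying that corollary (via Proposition \ref{prop:inductive_inductiveinverse} and Theorem \ref{thm:invcorr}) is in fact crypto $\langbase$-inductive according to Definition \ref{def:Crypto:inductive}.

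Concretely, starting with a refined $(\Omega, \varepsilon)$-inductive inequality $(\varphi \leq \psi)[\overline{\eta_a}/!\overline{a}, \overline{\eta_b}/!\overline{b}][\overline{\alpha}/!\overline{x}, \overline{\beta}/!\overline{y}, \overline{\gamma}/!\overline{z}, \overline{\delta}/!\overline{w}]$, the resulting very simple Sahlqvist inequality can be described as $(\varphi \leq \psi)[\overline\xi / \overline a, \overline \zeta / \overline b]$, where the skeleton $\varphi \leq \psi$ is preserved verbatim and each $\xi$ (resp.~$\zeta$) is obtained by substituting fresh propositional variables $p_i$ (resp.~$q_i$) for the critical variables $v_i = \crit{\alpha_i}$ (resp.~$\crit{\beta_i}$) in the terms $\mathsf{LA}(\alpha_i)(\cdot, \overline z, \overline w)$ (resp.~$\mathsf{RA}(\beta_i)(\cdot, \overline z, \overline w)$) obtained from the PIA subformulas by adjunction.

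To verify condition 1 of Definition \ref{def:Crypto:inductive}, I would observe that the $\varepsilon'$-critical leaves of the constructed inequality occupy the positions in $\varphi \leq \psi$ that previously held the critical variables $v_i$; hence the critical branches consist only of definite skeleton nodes from the original inequality's skeleton, and therefore contain only signed $\langbase$-connectives. For condition 2, I would inherit a strict order $\Omega'$ on the fresh variables from $\Omega$ (declaring $p_i, q_i$ to be $\Omega'$-smaller than $p_j, q_j$ whenever $v_i <_\Omega v_j$), and check that every occurrence of a connective in $(\mathcal{F}^\ast \cup \mathcal{G}^\ast) \setminus (\mathcal{F} \cup \mathcal{G})$ lies on the PIA-type path descending from the root of some $\mathsf{LA}(\alpha_i)$ or $\mathsf{RA}(\beta_i)$ subterm down to its distinguished fresh variable. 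This distinguished variable is necessarily the $\Omega'$-largest in its subterm's scope, since the remaining variables of $\alpha_i, \beta_i$ (those occurring in $\overline z, \overline w$) are $\Omega$-smaller than $v_i$ by the defining property of refined $(\Omega, \varepsilon)$-inductive inequalities.

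The main obstacle will be to verify that each such $\mathsf{LA}(\alpha_i)$ (resp.~$\mathsf{RA}(\beta_i)$) subterm is \emph{precisely} the subtree identified as $\mathsf{MVTree}_{\Omega'}^{\varepsilon'}$ of the corresponding non-critical leaf. This requires (i) checking condition 3 of Definition \ref{def:mvtree}, namely that the topmost node is a residual connective, which breaks down in the degenerate case where $\alpha_i$ is a single variable and no residual is produced at all (in which case the MVTree is trivially the leaf itself and the subterm contributes no occurrences of connectives from $(\mathcal{F}^\ast \cup \mathcal{G}^\ast) \setminus (\mathcal{F} \cup \mathcal{G})$); (ii) checking condition 4 (disjointness of $\mathsf{MVTree}$s for multiple non-critical leaves labelled by the same variable), which should follow from the strictness of $\Omega$ together with the rewriting afforded by Remark \ref{remark:refinedinductive}; and (iii) confirming that the subterm cannot be properly extended upward into the skeleton, which holds because ascending one level past the root of $\mathsf{LA}(\alpha_i)$ reaches a skeleton node of $\varphi \leq \psi$, violating condition 1 of Definition \ref{def:mvtree}.
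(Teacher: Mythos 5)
Your proposal follows the paper's proof essentially verbatim: both reduce the claim to Corollary \ref{cor:inductive_to_vss}/Theorem \ref{thm:invcorr}, observe that the $\varepsilon'$-critical branches of the resulting very simple Sahlqvist inequality live entirely in the original $\langbase$-skeleton $\varphi\leq\psi$, and transport $\Omega$ to a strict order $\Omega'$ on the fresh variables so that item 2 of Definition \ref{Inducive:Ineq:Def} yields item 2 of Definition \ref{def:Crypto:inductive}. Your description of the output slightly glosses over the $\overline\gamma$/$\overline\delta$ layer (the fresh variables replace the nominals $\nomj_{x_i}$ occupying the adjoint positions of the $\mathsf{LA}(\alpha_i)$/$\mathsf{RA}(\beta_i)$ terms, which are nested inside the $\gamma_i$'s and $\delta_i$'s, rather than replacing the critical variables themselves), but this is harmless, and your extra care in matching the $\mathsf{LA}/\mathsf{RA}$ spines against $\mathsf{MVTree}_{\Omega'}^{\varepsilon'}$ — including the degenerate single-variable case and the disjointness of the trees — makes explicit precisely the step the paper dispatches with ``by construction.''
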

\begin{proof}
By Corollary \ref{cor:inductive_to_vss}, any refined $(\Omega,\varepsilon)$-inductive inequality $(\varphi \leq \psi)[\eta_a/!a,\eta_b/!b][\alpha/!x,\beta/!y,\gamma/!z,\delta/!w]$ is equivalent to some very simple Sahlqvist $\langbase^\ast$-inequality. In particular, given the ALBA output 
\begin{equation}
\label{eq:cryptoalbaoutput}
\forall \overline\nomj_x, \overline \cnomm_y, \overline \nomj_z, \overline \cnomm_w 
\left(
\left(\bigmetaand_{i=1}^{n_z}\nomj_{z_i}\leq \gamma_i \metaand 
\bigmetaand_{i=1}^{n_w} \delta_i\leq \cnomm_{w_i}\right)\left[\overline{\bigvee \mathsf{Mv}(p)/p}, \overline{\bigwedge \mathsf{Mv}(q)/q}\right] \Rightarrow
\hfill
(\varphi \leq \psi)[\overline{\nomj_a/!a}, \overline{\cnomm_b/!b}]
\right),
\end{equation}
the proof of Theorem \ref{thm:invcorr} can be replicated by instantiating $\xi_i \coloneqq \gamma_i \left[\overline{\bigvee \mathsf{Mv}(p)/p}, \overline{\bigwedge \mathsf{Mv}(q)/q}\right]$ and $\zeta_i \coloneqq \delta_i \left[\overline{\bigvee \mathsf{Mv}(p)/p}, \overline{\bigwedge \mathsf{Mv}(q)/q}\right]$ for each $\gamma_i$ and $\delta_i$, and (still following the proof of Theorem \ref{thm:invcorr}), the initial inequality can be rewritten as
\begin{equation}
\label{eq:proving_crypto_things}
\forall \overline{p_{\nomj_{x_i}}}, \overline{q_{\cnomm_{y_i}}} \left(
\varphi \leq \psi
\right)
\left[\overline{
\xi_i'
/\nomj_{z_i}
},\overline{
\zeta_i' 
/\cnomm_{w_i}}
\right]
\left[
\overline{p_{\nomj_{x_i}}}/\overline\nomj_{x_i}, \overline{q_{\cnomm_{y_i}}}/\overline{\cnomm_{y_i}}
\right],
\end{equation}
which is a very simple Sahlqvist inequality for $\varepsilon'(p_{\nomj_{x_i}}) = 1$ and $\varepsilon'(q_{\cnomm_{y_i}}) = \partial$ (for any index $i$).
Of course, all the $\varepsilon'$-critical branches in \eqref{eq:proving_crypto_things} are branches in the $\langbase$-formulas $\varphi$ and $\psi$, and hence  contain only nodes from $\langbase$. 
Let us exhibit an order on the variables in $\overline{p_{\nomj_{x_i}}}$ and $\overline{q_{\cnomm_{y_i}}}$ that satisfies item 2 of Definition \ref{def:Crypto:inductive}. 

In the computation of the ALBA output \eqref{eq:cryptoalbaoutput}, each $\nomj_x$ and $\cnomm_y$ has been introduced in relation to some definite positive (resp.\ negative) PIA $\alpha$ (resp.\ $\beta$) during first approximation. Each such PIA formula has a critical variable occurrence $v$ on which Lemma \ref{lemma:ackermann} is applied after computing the left (resp.\ right) residual  of $\alpha$ (resp.\ $\beta$) with respect to $v$. For every $\nomj_x$ and every $\cnomm_y$, let $\tau(\nomj_x)$ and $\tau(\cnomm_y)$ be the critical variables of the PIA formulas to which they are related, and let $\rho(\nomj_x)$ and $\rho(\cnomm_y)$ be the variables $p_{\nomj_x}$ and $q_{\cnomm_y}$, respectively. Consider the  order $<_{\Omega'}$ on the variables of \eqref{eq:proving_crypto_things} defined as follows: for any $\pureu,\purev \in \{ \nomj_{x_i} : 1 \leq i \leq n_x \} \cup \{ \cnomm_{y_i} : 1 \leq i \leq n_y \}$,
\[
\rho(\pureu) <_{\Omega'} \rho(\purev) \quad\quad \mbox{iff} \quad\quad 
\tau(\pureu) <_\Omega \tau(\purev).
\]
By construction and by item 2 of Definition \ref{Inducive:Ineq:Def}, item 2 of Definition \ref{def:Crypto:inductive} is satisfied.
\end{proof}

\begin{example}
\label{eg:cryptogoranko}
Consider the LE language with $\mathcal{F} = \{\Box,\Diamond\}$ and $\mathcal{G} = \{ \circfor, \circback \}$.
For $\varepsilon(p_j) = 1$ and empty $\Omega$, the following inequality\footnote{We remind the reader that in this example $\circ,\Diamondblack \in \mathcal{F}^* \setminus \mathcal{F}$, and $\circfor, \circback \in \mathcal{G}$ are the residuals of $\circ$. }
\[
p_\nomj \leq \Diamond\Box\Box\Diamondblack(\Diamond\Box p_\nomj \circ \Diamondblack p_\nomj)
\]
from Example \ref{eg:goranko_inv_to_vss} is crypto $(\Omega,\varepsilon)$-inductive, as the two occurrences of $p_\nomj$ do not share the same $\mathsf{MVTree}_\Omega$ for any $\Omega$. The term encoded by the $\mathsf{MVTree}_\Omega^\varepsilon$ of the left non-critical occurrence of $p_\nomj$ is just $p_\nomj$, and the one of the right occurrence is $\Diamondblack(\Diamond\Box p_\nomj \circ \Diamondblack p_\nomj)$. Therefore, the procedure in Proposition \ref{prop:cryptotoinductive} yields
\smallskip

{{\centering 
\begin{tabular}{rl}
& $p_\nomj \leq \Diamond\Box\Box\Diamondblack(\Diamond\Box p_\nomj \circ \Diamondblack p_\nomj)$ \\
iff & $p_\nomj \leq q_1 \Rightarrow p_\nomj \leq \Diamond\Box\Box\Diamondblack(\Diamond\Box q_1 \circ \Diamondblack p_\nomj)$\\
iff & $p_\nomj \leq q_1 \metaand \Diamondblack(\Diamond\Box q_1 \circ \Diamondblack p_\nomj) \leq q_2 \Rightarrow p_\nomj \leq \Diamond\Box\Box q_2$ \\
iff & $p_\nomj \leq q_1 \metaand p_\nomj \leq \Box(\Diamond\Box q_1 \circback \Box q_2) \Rightarrow p_\nomj \leq \Diamond\Box\Box q_2$ \\
iff & $p_\nomj \leq q_1 \wedge \Box(\Diamond\Box q_1 \circback \Box q_2) \Rightarrow p_\nomj \leq \Diamond\Box\Box q_2$ \\
iff & $q_1 \wedge \Box(\Diamond\Box q_1 \circback \Box q_2) \leq \Diamond\Box\Box q_2$
\end{tabular}
\par}}
\end{example}
In the previous example, due to the fact that the parent node of the  leftmost noncritical occurrence of $p_\nomj$ is labelled by $\Box$ (a non-residuation conservative connective), its $\mathsf{MVTree}_\Omega^\varepsilon$ can never intersect the path from the root to the other non-critical occurrence in its $\mathsf{MVTree}_\Omega^\varepsilon$. Hence, the choice of the order on the leaves labelled by the same variable does not influence the definition of $\mathsf{MVTree}$. In the following example, we exhibit an inequality in which such choice matters, in the precise sense that the procedure in Proposition \ref{prop:cryptotoinductive} yields a different inductive inequality.
\begin{example}
Consider the following variation on the previous example:
\[p_\nomj \leq \Diamond\Box\Box\Diamondblack(\Diamondblack\Diamondblack p_\nomj \circ \Diamondblack p_\nomj),
\]
which is still crypto inductive for $\varepsilon(p_j) = 1$. Since by item 4 of Definition \ref{def:mvtree}, priority is given by ordering leaves (labelled by the same variable) from left to right, the $\mathsf{MVTree}_\Omega^\varepsilon$ of the first non-critical occurrence of $p_j$ is $\Diamondblack(\Diamondblack\Diamondblack p_\nomj \circ \Diamondblack p_\nomj)$, and the one of the second occurrence is just $\Diamondblack p_\nomj$. The procedure in Proposition \ref{prop:cryptotoinductive} yields
\smallskip

{{\centering 
\begin{tabular}{rl}
& $p_\nomj \leq \Diamond\Box\Box\Diamondblack(\Diamondblack\Diamondblack p_\nomj \circ \Diamondblack p_\nomj)$ \\
iff & $\Diamondblack p_\nomj \leq q_1 \Rightarrow p_\nomj \leq \Diamond\Box\Box\Diamondblack(\Diamondblack\Diamondblack p_\nomj \circ q_1)$\\
iff & $\Diamondblack p_\nomj \leq q_1 \metaand \Diamondblack(\Diamondblack\Diamondblack p_\nomj \circ q_1) \leq q_2 \Rightarrow p_\nomj \leq \Diamond\Box\Box q_2$ \\
iff & $p_\nomj \leq \Box q_1 \metaand p_\nomj \leq \Box\Box(\Box q_2 \circfor q_1) \Rightarrow p_\nomj \leq \Diamond\Box\Box q_2$ \\
iff & $p_\nomj \leq \Box q_1 \wedge \Box\Box(\Box q_2 \circfor q_1) \Rightarrow p_\nomj \leq \Diamond\Box\Box q_2$ \\
iff & $\Box q_1 \wedge \Box\Box(\Box q_2 \circfor q_1) \leq \Diamond\Box\Box q_2$
\end{tabular}
\par}}
\smallskip

\noindent As mentioned above, it is possible to use any order on the leaves labelled by the same variable. Any choice will lead to different outcomes. Let us check what happens when priority is given to rightmost non-critical leaf. The $\mathsf{MVTree}_\Omega^\varepsilon$ of the first and second non-critical occurrence of $p_j$ become $\Diamondblack\Diamondblack p_j$ and  $\Diamondblack(\Diamondblack\Diamondblack p_\nomj \circ \Diamondblack p_\nomj)$, respectively. The procedure in Proposition \ref{prop:cryptotoinductive} yields
\smallskip

{{\centering 
\begin{tabular}{rl}
& $p_\nomj \leq \Diamond\Box\Box\Diamondblack(\Diamondblack\Diamondblack p_\nomj \circ \Diamondblack p_\nomj)$ \\
iff & $\Diamondblack\Diamondblack p_\nomj \leq q_1 \Rightarrow p_\nomj \leq \Diamond\Box\Box\Diamondblack(q_1 \circ \Diamondblack p_\nomj)$\\
iff & $\Diamondblack\Diamondblack p_\nomj \leq q_1 \metaand \Diamondblack(q_1 \circ \Diamondblack p_\nomj) \leq q_2 \Rightarrow p_\nomj \leq \Diamond\Box\Box q_2$ \\
iff & $p_\nomj \leq \Box\Box q_1 \metaand p_\nomj \leq \Box(q_1 \circback \Box q_2) \Rightarrow p_\nomj \leq \Diamond\Box\Box q_2$ \\
iff & $p_\nomj \leq \Box\Box q_1 \wedge \Box(q_1 \circback \Box q_2) \Rightarrow p_\nomj \leq \Diamond\Box\Box q_2$ \\
iff & $\Box\Box q_1 \wedge \Box(q_1 \circback \Box q_2) \leq \Diamond\Box\Box q_2$
\end{tabular}
\par}}
\end{example}

Having shown that crypto $\langbase$-inductive inequalities can be equivalently rewritten as inductive $\langbase$ inequalities, Definition \ref{def:inductiveinvcorr} can be restricted so as to target such inequalities:
\begin{definition} 
\label{def:crypto_inductive_inverse_corr}
An {\em $\langbase$-\cryptoinductiveinversecorrespondent} is a $\langbase$-{\inductiveinversecorrespondent}
\[
\forall \overline \nomj, \overline\cnomm, \overline\nomi, \overline\cnomn \left( 
\bigmetaor_{\nomj_i \mbox{ in } \overline\nomj}
\theta_{\nomj_i}^-(\nomj_i)
\metaor
\bigmetaor_{\cnomm_i \mbox{ in } \overline\cnomm}
\theta_{\cnomm_i}^-(\cnomm_i)
\metaor
\varphi \leq \psi
\right),
\]
whose equivalent very simple Sahlqvist $\langbase^\ast$-inequality obtained using the procedure in Theorem \ref{thm:invcorr} is crypto $\langbase$-inductive.

We consider to be inverse correspondents also formulas with the (trivially equivalent) shape
\[
\forall \overline \nomj, \overline\cnomm, \overline\nomi, \overline\cnomn \left( 
\bigmetaand_{\nomj_i \mbox{ in } \overline\nomj}
\metanot\theta_{\nomj_i}^-(\nomj_i)
\metaor
\bigmetaand_{\cnomm_i \mbox{ in } \overline\cnomm}
\metanot\theta_{\cnomm_i}^-(\cnomm_i)
\Rightarrow
\varphi \leq \psi
\right).
\]
\end{definition}

A straightforward consequence of the discussion above follows from Theorem \ref{thm:invcorr}: 

\begin{theorem}
\label{thm:cryptoinvcorr}
Any $\langbase$-{\cryptoinductiveinversecorrespondent} is equivalent to some $\langbase$-crypto inductive inequality. Hence, by Proposition \ref{prop:cryptotoinductive}, every $\langbase$-{\cryptoinductiveinversecorrespondent} is equivalent to some $\langbase$-inductive inequality.
\end{theorem}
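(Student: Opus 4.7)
The plan is to observe that Theorem \ref{thm:cryptoinvcorr} is essentially a direct corollary of the machinery already established in the section, and in particular follows by chaining Theorem \ref{thm:invcorr}, Definition \ref{def:crypto_inductive_inverse_corr}, and Proposition \ref{prop:cryptotoinductive}. So the proof should be short and mostly bookkeeping; there is no new algebraic content to develop.

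Concretely, I would start from an arbitrary $\langbase$-\cryptoinductiveinversecorrespondent
\[
\Phi \;=\; \forall \overline\nomj,\overline\cnomm,\overline\nomi,\overline\cnomn\Big(
\bigmetaor_{\nomj_i}\theta^-_{\nomj_i}(\nomj_i) \metaor \bigmetaor_{\cnomm_i}\theta^-_{\cnomm_i}(\cnomm_i) \metaor \varphi\leq\psi
\Big),
\]
which by Definition \ref{def:crypto_inductive_inverse_corr} is in particular an $\langbase$-\inductiveinversecorrespondent{} in the sense of Definition \ref{def:inductiveinvcorr}. Applying the effective procedure of Theorem \ref{thm:invcorr} to $\Phi$ produces a very simple Sahlqvist $\langbase^\ast$-inequality $\phi\leq\psi$ semantically equivalent to $\Phi$ on perfect LEs. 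By the second clause of Definition \ref{def:crypto_inductive_inverse_corr}, this particular $\phi\leq\psi$ is crypto $\langbase$-inductive in the sense of Definition \ref{def:Crypto:inductive}. Hence the first assertion of the theorem follows immediately.

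For the second assertion, we simply invoke Proposition \ref{prop:cryptotoinductive}: every crypto $\langbase$-inductive inequality is semantically equivalent (on perfect LEs) to an $\langbase$-inductive inequality, and that equivalence is witnessed by the constructive ALBA-style extraction procedure in the proof of that proposition. Composing the two transformations (Theorem \ref{thm:invcorr} followed by the extraction procedure of Proposition \ref{prop:cryptotoinductive}) produces an effective translation from $\Phi$ to an $\langbase$-inductive inequality equivalent to it.

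Since all ingredients are already in place, there is no genuine obstacle; the only point that requires a small amount of care is making sure that the output of the procedure of Theorem \ref{thm:invcorr} is \emph{exactly} the very simple Sahlqvist $\langbase^\ast$-inequality referred to in Definition \ref{def:crypto_inductive_inverse_corr}, so that the crypto-inductive hypothesis transfers verbatim. This is immediate because the definition of \cryptoinductiveinversecorrespondent{} is phrased precisely in terms of the inequality computed by that procedure. Finally, I would record that the whole translation is effective, since both Theorem \ref{thm:invcorr} and Proposition \ref{prop:cryptotoinductive} are algorithmic.
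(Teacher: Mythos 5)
Your proposal is correct and matches the paper exactly: the paper gives no separate proof, stating the theorem as ``a straightforward consequence'' of Theorem \ref{thm:invcorr}, Definition \ref{def:crypto_inductive_inverse_corr} (which builds the crypto-inductive requirement directly into the definition of {\cryptoinductiveinversecorrespondent}), and Proposition \ref{prop:cryptotoinductive}, which is precisely the chain you describe. Your added remark that the crypto hypothesis applies to \emph{exactly} the inequality produced by the procedure of Theorem \ref{thm:invcorr} is the right point to flag, and it is immediate for the reason you give.
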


The reader is referred to Section \ref{sec:moreexamples} for more examples.
\section{Instantiation to specific semantic settings}
\label{sec:instances}

In the present section we discuss how the argument in Section \ref{sec:inverse} can be instantiated to a few semantics for LE-logics. More specifically, we describe the shape in Definition \ref{def:inductiveinvcorr} in the frame correspondence languages of such semantics. Examples of (inverse) correspondence in these settings can be then found in Section \ref{sec:moreexamples}. Other semantics for LE-logics which are not taken into account for space reasons do exists (e.g., see graph-based frames in \cite{conradie2020non}).

\subsection{Kripke semantics}
\label{ssec:kripke}

In the usual Kripke semantics, formulas are interpreted using structures $(X, (R_h)_{h \in \mathcal{F} \cup \mathcal{G}})$ to each $n$-ary connective $h$ in a fixed LE-language $\langbase$ is associated a relation $R_h \subseteq X \times X^{n}$. Valuations map variables into the powerset of $X$, $\bot$ to $\varnothing$, $\top$ to $X$, and are extended to formulas as follows:
\smallskip

{{\centering
\begin{tabular}{rclcrcl}
$V(\varphi \wedge \psi)$ & $=$ & $V(\varphi)\cap V(\psi)$ &
\quad\quad &
$V(g(\varphi_1,\ldots,\varphi_n))$ & $=$ & $(R_g^{-1}[V(\varphi_1)^{\varepsilon_g^\partial(1)} \times \cdots \times V(\varphi_1)^{\varepsilon_g^\partial(n)}])^{c}$
\\
$V(\varphi \vee \psi)$ & $=$& $V(\varphi)\cup V(\psi)$ &
\quad\quad &
$V(f(\varphi_1,\ldots,\varphi_n))$ & $=$ & $R_f^{-1}[V(\varphi_1)^{\varepsilon_f(1)} \times \cdots \times V(\varphi_1)^{\varepsilon_f(n)}]$,
\end{tabular}
\par}}
\smallskip

\noindent for every $f \in \mathcal{F}, g\in\mathcal{G}$, and where for a set $S \subseteq X$, $S^1 := S$ and $S^\partial := S^{c}:= X \setminus S$.

As discussed in Remark \ref{remark:flatinequalities}, in the classical Kripke setting, both nominals and conominals can be  translated as individual variables ranging in the domain of  Kripke frames. As already mentioned (cf.~Remark \ref{remark:negatedskeleton}), an inequality $\nomj \nleq \cnomm$ translates as $x = y$ if $\nomj$ is translated as $x$ and $\cnomm$ as $y$. To understand what the other inequalities in $\langmeta$ become in the standard first order correspondence language, let us see what happens with the simplest forms of inequalities that may occur. For any assignment that maps $\nomj$ to $\{x\}$, $\nomi$ to $\{y\}$, $\cnomm$ to $\{y\}^{c}$, and $\cnomn$ to $\{x\}^{c}$,
\smallskip

{{\centering 
\begin{tabular}{rl}
&$\nomj \leq \Box\cnomm$ \\
iff & $x \in R_\Box^{-1}[(\{y\}^{c})^{c}]^{c}$ \\
iff & $x \notin R_\Box^{-1}[y]$ \\
iff & $\metanot R_\Box xy$.
\end{tabular}
\begin{tabular}{rl}
&$\nomj \leq \rhd\nomi$ \\
iff & $x \in R_\rhd^{-1}[y]^{c}$ \\
iff & $x \notin R_\rhd^{-1}[y]$ \\
iff & $\metanot R_\rhd xy$.
\end{tabular}
\begin{tabular}{rl}
&$\Diamond \nomj \leq \cnomm$ \\
iff & $R_\Diamond^{-1}[x] \subseteq \{y\}^{c}$ \\
iff & $y \notin R_\Diamond^{-1}[x]$ \\
iff & $\metanot R_\Diamond yx$.
\end{tabular}
\begin{tabular}{rl}
&$\lhd \cnomn \leq \cnomm$ \\
iff & $R_\lhd^{-1}[(\{x\}^{c})^{c}] \subseteq \{y\}^{c}$ \\
iff & $y \notin R_\lhd^{-1}[x]$ \\
iff & $\metanot R_\lhd yx$.
\end{tabular}
\par}}
\smallskip

\noindent Such inequalities can be straightforwardly generalized to operators with arbitrary arity and tonicity in each coordinate. Let us consider any assignment that maps $\nomj$ to $\{x\}$, $\cnomm$ to $\{y\}^{c}$, and each pure variable $\pureu_i$ in the vector $\overline\pureu$ to $\{z_i\}$ if $\pureu_i$ is a nominal and to $\{z_i\}^{c}$ otherwise. For any $f \in \mathcal{F}$ (resp.\ $g\in\mathcal{G}$) such that $\pureu_i$ is a nominal (resp.\ conominal) for every positive coordinate of $f$ (resp.\ negative coordinate of $g$), and a conominal for every negative one (resp.\ positive one),
\smallskip 

{{\centering
\begin{tabular}{rl}
    & $\nomj \leq g(\overline\pureu)$ \\
iff & $x \in R_g^{-1}[(z_1,\ldots,z_n)]^{c}$ \\
iff & $x \notin R_g^{-1}[(z_1,\ldots,z_n)]$ \\
iff & $\metanot R_g (x,z_1,\ldots,z_n)$
\end{tabular}
\begin{tabular}{rl}
    & $f(\overline\pureu) \leq \cnomm$ \\
iff & $R_f^{-1}[(z_1,\ldots,z_n)] \subseteq \{y\}^{c}$ \\
iff & $y \notin R_f^{-1}[(z_1,\ldots,z_n)]$ \\
iff & $\metanot R_f (y,z_1,\ldots,z_n)$.
\end{tabular}
\par}}
\smallskip

Inequalities $\varphi \leq \cnomm$ (resp.\ $\nomj \leq \psi$) involving arbitrary positive (resp.\ negative) skeleton formulas $\varphi$ (resp.\ $\psi$) can be interpreted in two ways: either the term function of $\varphi$ (resp.\ $\vee$) is thought of as an $f$-operator (resp.\ $g$-operator) as above, or the inequality is further flattified to a formula containing only inequalities as above. This further flattification process is not possible in general lattice expansions (see Appendix \ref{sssec:flatteningskeleton}), but in distributive settings (such as the one given by Kripke semantics) it can be carried out \cite{inversedle}.
Indeed, for any positive skeleton $\varphi$ (resp.\ negative skeleton $\psi$), a negated inequality $\nomj \nleq \psi$ (resp. $\varphi \nleq \cnomm$) is equivalent to $\psi \leq \neg\nomj$ (resp.\ $\neg\cnomm \leq \varphi$); hence any $\langmeta$-atom can be rewritten as $\psi \leq \cnomm'$ or $\nomj' \leq \varphi$, by identifying $\cnomm'$ with $\neg\nomj$ and $\nomj'$ with $\neg\cnomm$.
Following \cite{inversedle}, if $\overline\varphi$ and $\overline \psi$ are vectors of positive and negative skeleton formulas respectively, then
 any inequality $\nomj \leq f(\overline \varphi, \overline\psi)$ (resp.\ $g(\overline \psi, \overline \varphi) \leq \cnomm$), assuming w.l.o.g.~that $f \in \mathcal{F}$ (resp.~$g \in \mathcal{G}$) be positive (resp.\ negative) in $\overline \varphi$ and negative (resp.\ positive) in $\overline \psi$, is equivalent to
\footnote{Note that this equivalent rewriting works only in distributive settings because it uses the fact that completely join/meet irreducible elements are completely join/meet prime. }
\smallskip

{{\centering
\begin{tabular}{rl}
& $\exists \overline\nomi \exists \overline\cnomn ( \nomj \leq f(\overline\nomi, \overline\cnomn) \metaand \bigmetaand_h \nomi_h \leq \varphi_h \metaand \bigmetaand_k \psi_k \leq \cnomn_k ) $ \\
resp.\ & $\exists \overline\cnomn \exists \overline\nomi (g(\overline\cnomn, \overline\nomi) \leq \cnomm \metaand \bigmetaand_h \nomi_h \leq \varphi_h \metaand \bigmetaand_k \psi_k \leq \cnomn_k ) $,
\end{tabular}
\par}}
\smallskip

\noindent and it is possible to proceed recursively on the inequalities $\nomi_h \leq \varphi_h$ and $\psi_k \leq \cnomn_k$ until all the inequalities above contain at most one connective. 

Hence, compared to the general lattice setting, the distributive version of the inverse correspondence algorithm allows for a finer-grained  manipulation of the base case of the inverse-disjunct definition, which also allows to work only with $\langmeta$-atoms containing at most one connective, which better reflect the classical and distributive usual frame correspondence languages.

The definition of positive (resp.\ negative) inverse disjunct is specialized as follows:
\begin{description}
\item[Base case:] $\theta^+(x) := \metanot R_\varphi x\overline y$ (resp.\ $\theta^-(x) := R_\varphi x\overline y$) for a positive skeleton $\varphi$, or $\theta^+(x) := \metanot R_\psi x\overline y$ (resp.\ $\theta^-(x) := R_\psi x\overline y$) for a negative skeleton $\psi$. As a consequence of the discussion above, $\theta^+(x)$ (resp.\ $\theta^-(x)$) is equivalent to a universally quantified disjunction (resp.\ existentially quantified conjunction) of relational atoms with certain properties on the position in which the quantified variables occur in the relational atoms (we refer the reader to \cite[Section 4]{inversedle} and \cite[Section 3.7]{blackburn2002modal}).
\item[Conjunction/Disjunction:] $\theta^+(x) \coloneqq \bigmetaand_i \theta_i^+(x)$ (resp.\ $\theta^-(x) \coloneqq \bigmetaor_i \theta_i^-(x)$); 
\item[Quantification:] $\theta^+(x) \coloneqq\forall\overline y[R_\psi x\overline y \Rightarrow \bigmetaor_i \theta_i^-(y_i)]$ (resp.\ $\theta^-(x) \coloneqq\exists \overline y[\bigmetaand_i \theta_i^+(y_i) \metaand R_\psi x\overline y]$), where $\psi$ is a definite skeleton formula, and each $\theta_i^-$ (resp.\ $\theta^+_i$) is a negative (resp.\ positive) inverse disjunct where $x$ does not occur. The formula $\psi$ can be just a variable.
\end{description}
\subsection{Polarity based semantics}
\label{ssec:polarity_based_semantics}

Polarity based semantics were introduced in \cite{conradie2020non}. In the present section we recall the basic definitions, and we instantiate the whole framework to this specific setting. The preliminaries in this Section are largerly drawn from \cite{dairapaper}.

For all sets $A, B$ and any relation $S \subseteq A \times B$, we let, for any $A' \subseteq A$ and $B' \subseteq B$,
$$S^{(1)}[A'] := \{b \in B\mid  \forall a(a \in A' \Rightarrow a Sb ) \} \quad \mathrm{and}\quad S^{(0)}[B'] := \{a \in A \mid \forall b(b \in B' \Rightarrow a S b)  \}.$$
For all sets $A, B_1,\ldots B_n,$ and any relation $S \subseteq A \times B_1\times \cdots\times B_n$, for any $\overline{C}: = (C_1,\ldots, C_n)$ where $C_j\subseteq B_j$ for all $1 \leq j \leq n$, we let, for any $A'\subseteq A$ and $1\leq i\leq n$,
\begin{equation*}\label{eq:notation bari}
\overline{C}^{\,i}:  = (C_1,\ldots,C_{i-1}, C_{i+1},\ldots, C_n)
\quad \mbox{ and } \quad
\overline{C}^{\,i}_{A'}: = (C_1\ldots,C_{i-1}, A', C_{i+1},\ldots, C_n).
\end{equation*}
When $B_i\supseteq C_i: = \{c_i\}$ and $A\supseteq A': =\{a'\} $, we write $\overline{c}$ for $\overline{\{c\}}$,  and $\overline{c}^{\,i}$ for $\overline{\{c\}}^{\,i}$, and $\overline{c}^{\,i}_{a'}$ for $\overline{\{c\}}^{\,i}_{\{a'\}}$.
We also let:
$S_i \subseteq B_i \times B_1 \times \cdots \times B_{i-1} \times A \times B_{i + 1} \times \cdots\times B_n$ be defined by
$(b_i, \overline{c}_{a}^{\, i})\in S_i \ \mbox{ iff }\ (a,\overline{c})\in S$; $S^{(0)}[\overline{C}] := \{a \in A\mid  \forall \overline{b}(\overline{b}\in \overline{C} \Rightarrow aS \overline{b} ) \}$, and $S^{(i)}[A', \overline{C}^{\,i}] := S_i^{(0)}[\overline{C}^{\, i}_{A'}]$.

A {\em formal context} or {\em polarity} \cite{ganter2012formal} is a structure $\mathbb{P} = (A, X, I)$ s.t.~$A$ and $X$ are sets and $I\subseteq A\times X$ is a binary relation. 
For every polarity $\mathbb{P}$, maps $(\cdot)^\uparrow: \mathcal{P}(A)\to \mathcal{P}(X)$ and $(\cdot)^\downarrow: \mathcal{P}(X)\to \mathcal{P}(A)$ can be defined as follows:
$B^\uparrow: = I^{(1)}[B] =  \{x\in X \mid (\forall a \in B) aIx\}$ and $Y^\downarrow: = I^{(0)}[Y] = \{a\in A\mid (\forall x \in Y) aIx\}$. The maps $(\cdot)^\uparrow$ and $(\cdot)^\downarrow$ form a \textit{Galois connection} between $(\mathcal{P}(A), \subseteq)$ and $(\mathcal{P}(X), \subseteq)$, i.e. $Y \subseteq B^\uparrow$ iff $B\subseteq Y^\downarrow$
for all $B \in \mathcal{P}(A)$ and $Y\in \mathcal{P}(X)$. 
A {\em formal concept} of $\mathbb{P}$ is a pair 
$c = (\val{c}, \descr{c})$ s.t.~$\val{c}\subseteq A$ and $\descr{c}\subseteq X$, and  $\val{c}^{\uparrow} = \descr{c}$ and $\descr{c}^{\downarrow} = \val{c}$. The set $\val{c}$ is the {\em extension} of $c$, while $\descr{c}$ is its {\em intension}. It  immediately follows from this definition that if $(\val{c}, \descr{c})$ is a formal concept, then $\val{c}^{\uparrow\downarrow} = \val{c}$ and $\descr{c}^{\downarrow\uparrow} = \descr{c}$. That is, $\val{c}$ and $\descr{c}$ are  \textit{Galois-stable}.  The set $\mathbb{L}(\mathbb{P})$  of the formal concepts of $\mathbb{P}$ can be partially ordered as follows: for any $c, d\in \mathbb{L}(\mathbb{P})$, \[c\leq d\quad \mbox{ iff }\quad \val{c}\subseteq \val{d} \quad \mbox{ iff }\quad \descr{d}\subseteq \descr{c}.\]
With this order, $\mathbb{L}(\mathbb{P})$ forms a complete lattice.

A {\em polarity-based $\mathcal{L}_{\text{LE}}$-frame} (see \cite{dairapaper}) is a tuple $\mathbb{F} = (\mathbb{P}, \mathcal{R}_{\mathcal{F}}, \mathcal{R}_{\mathcal{G}})$, where  $\mathbb{P} = (A, X,  I)$ is a polarity, $\mathcal{R}_{\mathcal{F}} = \{R_f\mid f\in \mathcal{F}\}$, and $\mathcal{R}_{\mathcal{G}} = \{R_g\mid g\in \mathcal{G}\}$, such that  for each $f\in \mathcal{F}$ and $g\in \mathcal{G}$, the symbols $R_f$ and  $R_g$ respectively denote $(n_f+1)$-ary and $(n_g+1)$-ary relations 
$R_f \subseteq X \times A^{\varepsilon_{f}}$ and $R_g \subseteq A \times X^{\varepsilon_{g}}$,
where $A^{\varepsilon_{f}}$ denotes the $n_f$-fold cartesian product of $A$ and $X$ such that, for each $1\leq i\leq n_f$, the $i$th projection of $A^{\varepsilon_{f}}$ is $A$ if $\varepsilon_f(i ) = 1$ and is $X$ if $\varepsilon_f(i ) = \partial$, and $X^{\varepsilon_g}$ denotes the $n_g$-fold cartesian product of $A$ and $X$ such that for each $1\leq i\leq n_g$ the $i$th projection of $X^{\varepsilon_{g}}$ is $X$ if $\varepsilon_g(i ) = 1$ and is $A$ if $\varepsilon_g(i ) = \partial$.
In addition, all relations $R_f$ and $R_g$ are required to be {\em $I$-compatible}, i.e.\ the following sets are assumed to be  Galois-stable for all $a \in A$, $x \in X $, $\overline{a} \in A^{\varepsilon_f}$, and $\overline{x} \in X^{\varepsilon_g}$:
\[
R_f^{(0)}[\overline{a}]\text{ and }R_f^{(i)}[x, \overline{a}^{\, i}] \quad\quad R_g^{(0)}[\overline{x}]\text{ and }R_g^{(i)}[a, \overline{x}^{\,i}].
\]

For any polarity-based frame $\mathbb{F}=(\mathbb{P}, \mathcal{R}_\mathcal{F}, \mathcal{R}_\mathcal{G})$ with $\mathbb{P} = (A, X, I)$, a {\em valuation} on $\mathbb{F}$ is a map $V:\atprop\to \mathbb{P}^+$. For every  $p\in \atprop$, we let  $\val{p}: = \val{V(p)}$ (resp.~$\descr{p}: = \descr{V(p)}$) denote the extension (resp.~the intension) of the interpretation of $p$ under $V$.  The elements (objects) of $\val{p}$ are the {\em members} of concept $p$ under  $V$; the elements (features) of $\descr{p}$ {\em describe}  concept $p$ under $V$. Any valuation $V$ on $\mathbb{F}$ extends homomorphically to a unique interpretation map of $\mathcal{L}$-formulas, which, abusing notation, we also denote $V$, and which is defined as follows:\footnote{\label{footn:abbreviations for val and descr of formulas}In what follows, we will drop reference to the valuation $V$ when this does not generate ambiguous readings, and write $\val{\varphi}$ for $\val{V(\phi)}$ and  $\descr{\varphi}$ for $\descr{V(\phi)}$.}.
\smallskip

{{\centering
\begin{tabular}{rcl c rcl}
$V(p)$ & $ = $ & $(\val{p}, \descr{p})$\\
$V(\top)$ & $ = $ & $(A, A^{\uparrow})$ & \quad\: &
 $V(\bot)$ & $ = $ & $(X^{\downarrow}, X)$\\
$V(\phi\wedge\psi)$ & $ = $ & $(\val{\phi}\cap \val{\psi}, (\val{\phi}\cap \val{\psi})^{\uparrow})$ &&
$V(\phi\vee\psi)$ & $ = $ & $((\descr{\phi}\cap \descr{\psi})^{\downarrow}, \descr{\phi}\cap \descr{\psi})$\\
$V(f(\overline{\phi}))$ & $ = $ & $\left(\left(R_f^{(0)}[\overline{\val{\phi}}^{\varepsilon_f}]\right)^{\downarrow}, R_f^{(0)}[\overline{\val{\phi}}^{\varepsilon_f}]\right)$ &&
$V(g(\overline{\phi}))$ & $ = $ & $\left(R_g^{(0)}[\overline{\descr{\phi}}^{\varepsilon_g}], \left(R_g^{(0)}[\overline{\descr{\phi}}^{\varepsilon_g}]\right)^{\uparrow}\right)$
\end{tabular}
\par}}

Here, for every $\overline{\phi}\in \mathcal{L}^{n_f}$  (resp.~$\overline{\phi}\in \mathcal{L}^{n_g}$), the tuple $\overline{\val{\phi}}^{\varepsilon_f}$ is such that for
each $1 \leq i \leq n_f$,  the $i$-th coordinate of $\overline{\val{\phi}}^{\varepsilon_f}$ is $\val{\phi_i}$ if $\varepsilon_f(i) = 1$, and is $\descr{\phi_i}$ if $\varepsilon_f(i) = \partial$, and $\overline{\descr{\phi}}^{\varepsilon_g}$ is such that for
each $1 \leq i \leq n_g$  the $i$-th coordinate of $\overline{\descr{\phi}}^{\varepsilon_g}$ is $\descr{\phi_i}$ if $\varepsilon_g(i) = 1$, and is  $\val{\phi_i}$ if $\varepsilon_g(i) = \partial$.

The two-sorted first order language  of polarity-based frames contains individual variables of two sorts, ranging over the objects and the attributes of the frame, and an $n$-ary relation symbol for every $n$-ary relation in the frame ($I$ included).
To see how $\langmeta$-atoms translate in the first order language of polarity based frames, let us start with the basic cases of skeleton formulas containing just one unary connective. For any polarity based frame $\mathbb{F}$ as above 
any unary $\Box, {\rhd}\in \mathcal{G}$, any $\Diamond, \lhd \in \mathcal{F}$,
and any assignment $V$ mapping (without loss of generality) $\nomj$ to the concept $(a^{\uparrow\downarrow}, a^\uparrow)$, $\nomi$ to $(b^{\uparrow\downarrow}, b^\uparrow)$, $\cnomm$ to $(x^\downarrow, x^{\downarrow\uparrow})$, and $\cnomn$ to $(y^\downarrow, y^{\downarrow\uparrow})$, the following equivalences hold:
\smallskip

{{\centering
\begin{tabular}{rl}
& $V(\nomj) \leq V(\Box\cnomm)$  \\
iff & $\val{\nomj}_V \subseteq \val{\Box\cnomm}_V$ \\
iff & $a^{\uparrow\downarrow} \subseteq R_\Box^{(0)}[x]$ \\
iff & $a \in R_\Box^{(0)}[x]$ \\
iff & $aR_\Box x$
\end{tabular}
\begin{tabular}{rl}
& $V(\nomj) \leq V({\rhd}\nomi)$  \\
iff & $\val{\nomj}_V \subseteq \val{{\rhd}\nomi}_V$ \\
iff & $a^{\uparrow\downarrow} \subseteq R_\rhd^{(0)}[b]$ \\
iff & $a \in R_\rhd^{(0)}[b]$ \\
iff & $aR_\rhd b$
\end{tabular}
\begin{tabular}{rl}
& $V(\Diamond\nomj) \leq V(\cnomm)$  \\
iff & $\descr{\cnomm}_V \subseteq \descr{\Diamond\nomj}_V$ \\
iff & $x^{\downarrow\uparrow} \subseteq R_\Diamond^{(0)}[a]$ \\
iff & $x \in R_\Diamond^{(0)}[a]$ \\
iff & $xR_\Diamond a$
\end{tabular}
\begin{tabular}{rl}
& $V({\lhd}\cnomn) \leq V(\cnomm)$  \\
iff & $\descr{\cnomm}_V \subseteq \descr{\lhd\cnomn}_V$ \\
iff & $x^{\downarrow\uparrow} \subseteq R_\lhd^{(0)}[y]^{\uparrow\downarrow}$ \\
iff & $x \in R_\lhd^{(0)}[y]$ \\
iff & $xR_\Diamond y$,
\end{tabular}
\begin{tabular}{rl}
& $V(\nomj) \leq V(\cnomm)$ \\ 
iff & $\val{\nomj}_V \subseteq \val{\cnomm}_V$ \\
iff & $a^{\uparrow\downarrow} \subseteq x^\downarrow$ \\
iff & $a \in x^\downarrow$ \\
iff & $a I x $
\end{tabular}
\par}}
\smallskip

\noindent where the third equivalence in each case follows from the fact that every relation ($I$ included) is $I$-compatible.

The equivalent rewritings above can be easily generalized to operators of any arity. For instance, for any operator $f$ in $\mathcal{F}$ (resp.\ $g \in \mathcal{G}$) whose first (resp.\ last) $n$ coordinates are positive, and whose last (resp.\ first) $m$ coordinates are negative, for any valuation mapping each $\nomi_i$ (for $1 \leq i \leq n$) to the concept generated by the object $a_i$, each $\cnomn_j$ (for $1 \leq j \leq m$) to the concept generated by the feature $x_i$, and $\cnomm$ (resp.\ $\nomj$) to the concept generated by the feature $y$ (resp.\ object $b$), it follows that
\smallskip

{{\centering
\begin{tabular}{rl}
& $V(f(\nomi_1, \ldots, \nomi_n, \cnomn_1, \ldots, \cnomn_m)) \leq V(\cnomm)$ \\
iff & $\descr{\cnomm}_V \subseteq \descr{f(\nomi_1, \ldots, \nomi_n, \cnomn_1, \ldots, \cnomn_m)}_V$ \\ 
iff & $y^{\downarrow\uparrow} \subseteq R^{(0)}_f[(a_1,\ldots,a_n,x_1,\ldots,x_m)]$ \\
iff & $y \in R^{(0)}_f[(a_1,\ldots,a_n,x_1,\ldots,x_m)]$ \\
iff & $R_f(y,a_1,\ldots,a_n,y_1,\ldots,y_m)$.
\end{tabular}
\begin{tabular}{rl}
& $V(\nomj) \leq V(g(\cnomn_1, \ldots, \cnomn_m,\nomi_1, \ldots, \nomi_n))$ \\
iff & $\val{\nomj}_V \subseteq \val{g(\cnomn_1, \ldots, \cnomn_m,\nomi_1, \ldots, \nomi_n)}_V$ \\ 
iff & $b^{\uparrow\downarrow} \subseteq R^{(0)}_g[(x_1,\ldots,x_m,a_1,\ldots,a_n)]$ \\
iff & $b \in R^{(0)}_g[(x_1,\ldots,x_m,a_1,\ldots,a_n)]$ \\
iff & $R_g(y,x_1,\ldots,x_m,a_1,\ldots,a_n)$.
\end{tabular}
\par}}
\smallskip

\noindent The same argument can easily be adapted to the interpretation of inequalities $\varphi(\nomi_1, \ldots, \nomi_n, \cnomn_1, \ldots, \cnomn_m) \leq \cnomm$ (resp.\ $\nomj \leq \psi(\cnomn_1, \ldots, \cnomn_m,\nomi_1, \ldots, \nomi_n)$), where $\varphi$ (resp.\ $\psi$) is a positive (resp.\ negative) skeleton formula which is, without loss of generality, positive in the first $n$ (resp.\ $m$) coordinates, and negative (resp.\ positive) in the last $m$ (resp.\ $n$) coordinates.
For notational convenience, let us associate a relation $R_\varphi \subseteq X \times A^n \times X^m$ (resp.\ $R_\psi \subseteq A \times X^m \times A^n$) to $\varphi$ (resp.\ $\psi$) by induction on $\varphi$ (resp.\ $\psi$) as follows.
\begin{definition}[Relation associated to skeleton formula.]
\label{def:rel_associated_with_skeleton_polarities}
Let $\varphi$ and $\psi$ be a positive and a negative skeleton formula, respectively. The relations $R_\varphi$ and $R_\psi$ are defined by simultaneous induction as follows:
\begin{description}
\item[Base case] if $\varphi$ (resp.\ $\psi$) is a variable, $R_\varphi \coloneqq J$ (resp.\ $R_\psi \coloneqq I$);
\item[Inductive case] if $\varphi = f(\varphi_1,\ldots,\varphi_h,\psi_1,\ldots,\psi_k)$ (resp.\ $\psi = g(\psi_1,\ldots,\psi_k,\varphi_1,\ldots,\varphi_h)$) such that $f \in \mathcal{F}$ (resp.\ $g\in\mathcal{G}$) is w.l.o.g.\ positive in its first $h$ (resp.\ $k$) coordinates and negative in the last $k$ (resp.\ $h$), each $\varphi_i$ is a positive skeleton formula, and each $\psi_i$ is a negative skeleton formula, then $R_\varphi \coloneqq R_f \:;_I\: (R_{\varphi_1}, \ldots, R_{\varphi_h}, R_{\psi_1}, \ldots, R_{\psi_k})$ (resp.\ $R_\psi \coloneqq R_g \:;_I\: (R_{\psi_1}, \ldots, R_{\psi_k}, R_{\varphi_1}, \ldots, R_{\varphi_h})$);
\end{description}
where the composition $;_I$ is defined as in Definition \ref{def:generalized_icomp} below.
\end{definition}

\begin{definition}[Generalized $I$-composition]
\label{def:generalized_icomp}
Let $D\subseteq X \times A^h \times X^k$ (resp.\ $B\subseteq A \times X^k \times A^h$), and let $D_i \subseteq X \times A^{h_i} \times X^{k_i}$ and $B_j \subseteq A \times X^{k_j} \times A^{h_j}$ for $1 \leq i \leq h$ and $h+1 \leq j \leq h + k$. The relation $D ;_I (D_1,\ldots,D_h,B_1,\ldots,B_k)$ is such that for all $\overline a_i \in A^{h_i}$, $\overline x_i \in X^{k_i}$, $\overline b_j \in A^{k_j}$, $\overline y_j \in X^{h_j}$, $a\in A$, $x \in X$,
\[
\begin{array}{ll}
(x,\overline a_1,\overline x_1,\ldots, \overline a_h, \overline x_h, \overline b_1, \overline y_1,\ldots, \overline b_k, \overline y_k) \in D ;_I (D_1,\ldots,D_h,B_1,\ldots,B_k)
& \mbox {if and only if}\\
x \in D^{(0)}[ (D_1^{(0)}[\overline a_1, \overline x_1]^\downarrow, \ldots, D_h^{(0)}[\overline a_h, \overline x_h]^\downarrow, B_1^{(0)}[\overline b_1, \overline y_1]^\uparrow, \ldots, B_k^{(0)}[\overline b_k, \overline y_k]^\uparrow) ],
\end{array}
\]
and, respectively,
\[
\begin{array}{ll}
(a,\overline b_1, \overline y_1,\ldots, \overline b_k, \overline y_k,\overline a_1,\overline x_1,\ldots, \overline a_h, \overline x_h) \in B ;_I (B_1,\ldots,B_k,D_1,\ldots,D_h)
& \mbox {if and only if}\\
a \in B^{(0)}[ (B_1^{(0)}[\overline b_1, \overline y_1]^\uparrow, \ldots, B_k^{(0)}[\overline b_k, \overline y_k]^\uparrow,D_1^{(0)}[\overline a_1, \overline x_1]^\downarrow, \ldots, D_h^{(0)}[\overline a_h, \overline x_h]^\downarrow) ].
\end{array}
\]
\end{definition}

We refer the reader to \cite{roughconcepts,dairapaper,conradie2020non} for discussions and motivations about the definition above which straightforwardly generalizes definitions found in these papers.

Thanks to the definitions of $R_\varphi$ and $R_\psi$, the interpretation of the inequalities $\varphi(\nomi_1, \ldots, \nomi_n, \cnomn_1, \ldots, \cnomn_m) \leq \cnomm$ (resp.\ $\nomj \leq \psi(\cnomn_1, \ldots, \cnomn_m,\nomi_1, \ldots, \nomi_n)$) where $\varphi$ is a positive (resp.\ negative) skeleton formula, can be given as follows
\smallskip

{{\centering
\begin{tabular}{rl}
& $V(\varphi(\nomi_1, \ldots, \nomi_n, \cnomn_1, \ldots, \cnomn_m)) \leq V(\cnomm)$ \\
iff & $\descr{\cnomm}_V \subseteq \descr{\varphi(\nomi_1, \ldots, \nomi_n, \cnomn_1, \ldots, \cnomn_m)}_V$ \\ 
iff & $y^{\downarrow\uparrow} \subseteq R^{(0)}_\varphi[(a_1,\ldots,a_n,x_1,\ldots,x_m)]$ \\
iff & $y \in R^{(0)}_\varphi[(a_1,\ldots,a_n,x_1,\ldots,x_m)]$ \\
iff & $R_\varphi(y,a_1,\ldots,a_n,y_1,\ldots,y_m)$,
\end{tabular}
\begin{tabular}{rl}
& $V(\nomj) \leq V(\psi(\cnomn_1, \ldots, \cnomn_m,\nomi_1, \ldots, \nomi_n))$ \\
iff & $\val{\nomj}_V \subseteq \val{\psi(\cnomn_1, \ldots, \cnomn_m,\nomi_1, \ldots, \nomi_n)}_V$ \\ 
iff & $b^{\uparrow\downarrow} \subseteq R^{(0)}_\psi[(x_1,\ldots,x_m,a_1,\ldots,a_n)]$ \\
iff & $b \in R^{(0)}_\psi[(x_1,\ldots,x_m,a_1,\ldots,a_n)]$ \\
iff & $R_\psi(y,x_1,\ldots,x_m,a_1,\ldots,a_n)$.
\end{tabular}
\par}}
\smallskip

\noindent As is clear from Definition \ref{def:rel_associated_with_skeleton_polarities}, the expressions $R_\varphi(y,a_1,\ldots,a_n,y_1,\ldots,y_m)$ and $R_\psi(y,x_1,\ldots,x_m,a_1,\ldots,a_n)$ are abbreviations for $\langpolarities$-formulas:

\begin{example}
\label{eg:generalized_icomp}
Consider the inequality $\varphi(\nomi_1,\nomi_2)\leq\cnomm$ where $\varphi(\nomi_1,\nomi_2) \coloneqq \Diamond \nomi_1 \circ \Diamond \nomi_2$. By Definition \ref{def:rel_associated_with_skeleton_polarities}, $R_\varphi \coloneqq R_\circ ;_I (R_\Diamond,R_\Diamond) \subseteq X \times A \times A$, where $R_\circ\subseteq X \times A \times A$, and $R_\Diamond \subseteq X \times A$. By Definition \ref{def:generalized_icomp}, for any $x \in X$, $a, b \in A$,
\smallskip

{{\centering
\begin{tabular}{rrl}
$(x, a, b) \in R_\circ ;_I (R_\Diamond,R_\Diamond)$ & iff & $x \in R_\circ^{(0)}[ R_\Diamond^{(0)}[a]^\downarrow, R_\Diamond^{(0)}[b]^\downarrow  ]$ \\
&iff & $x \in \{ y \in X : \forall c\forall d(c \in R_\Diamond^{(0)}[a]^\downarrow \metaand d \in R_\Diamond^{(0)}[b]^\downarrow  \Rightarrow R_\circ(y,c,d))  \}$ \\
&iff & $\forall c\forall d(c \in R_\Diamond^{(0)}[a]^\downarrow \metaand d \in R_\Diamond^{(0)}[b]^\downarrow  \Rightarrow R_\circ(x,c,d)) $ \\
&iff & $\forall c\forall d( \forall z(zR_\Diamond a \Rightarrow cIz) \metaand \forall w(w R_\Diamond b \Rightarrow dIw)  \Rightarrow R_\circ(x,c,d)) $.
\end{tabular}
\par}}
\end{example}

The formulas in Definition \ref{def:inversedisjunct} can be translated as follows for polarity based semantics.

\begin{definition}
\label{def:inversedisjunct_inpolarities}
A {\em positive} (resp.\ {\em negative}) {\em inverse disjunct} is a $\langpolarities$-formula $\theta^+(r)$ (resp.\ $\theta^-(r)$) defined inductively together with a variable $r \in \mathsf{AtObj} \cup \mathsf{AtAttr}$ as follows:
\begin{description}
    \item[Base case 1:] $\theta^+(r) \coloneqq R_\varphi x\overline s$ (resp.\ $\theta^-(r) \coloneqq \metanot R_\varphi x\overline s$), where $\varphi$ is a positive definite skeleton formula, $r = x$, and each $s$ in $\overline s$ is an object variable if it is positive in $\varphi$, a feature variable otherwise.
    \item[Base case 2:] $\theta^+(r) \coloneqq R_\psi a\overline s$ (resp.\ $\theta^-(r) \coloneqq \metanot R_\psi a\overline s$), where $\psi$ is a negative definite skeleton formula, $r = a$, and each $s$ in $\overline s$ is a feature variable if it is positive in $\psi$, an object variable otherwise.
    \item[Conjunction/Disjunction:] $\theta^+(r) \coloneqq \bigmetaand_i \theta_i^+(r)$ (resp.\ $\theta^-(r) \coloneqq \bigmetaor_i \theta_i^-(r)$); 
    \item[Quantification 1:] $\theta^+(r) \coloneqq\forall\overline s[ \metanot R_\psi a \overline s \Rightarrow \bigmetaor_i \theta_i^-(s_i)]$ (resp.\ $\theta^-(r) \coloneqq\exists \overline s[\bigmetaand_i \theta_i^+(s_i) \metaand \metanot R_\psi a\overline s]$), where $r \coloneqq a$, $\psi$ is a negative definite skeleton formula, and each $\theta_i^-$ (resp.\ $\theta^+_i$) is a negative (resp.\ positive) inverse disjunct where $a$ does not occur. The formula $\psi$ can be just a variable.
    \item[Quantification 2:] $\theta^+(r) \coloneqq \forall\overline s[\metanot R_\varphi x\overline s \Rightarrow \bigmetaor_i \theta_i^-(s_i)]$ (resp.\ $\theta^-(r) \coloneqq\exists \overline s[\bigmetaand_i \theta_i^+(s_i) \metaand \metanot R_\varphi x \overline s]$), where $r \coloneqq x$, $\varphi$ is a positive definite skeleton formula, and each $\theta_i^-$ (resp.\ $\theta^+_i$) is a negative (resp.\ positive) inverse disjunct where $x$ does not occur. The formula $\varphi$ can be just a variable.
\end{description}
\end{definition}

\begin{example}
The {\inductiveinversecorrespondent} from Example \ref{eg:goranko_inv_to_vss}
\[
\forall \nomj, \cnomm(
\exists \nomi_1 \left(
\Diamond\nomi_1 \nleq \cnomm
\metaand
    \forall \cnomn_1 \left(
    \nomi_1 \nleq \Box\Box\cnomn_1
    \Rightarrow
        \exists \nomi_2 \left(
        \Diamondblack(\Diamond\nomi_2 \circ \Diamondblack \nomj) \nleq \cnomn_1
        \metaand
        \nomi_2 \leq \Box\nomj
        \right)
    \right)
\right) 
\metaor \nomj \leq \cnomm
)
\]
can be instantiated to the following setting as follows:
\[
\forall a, x(
\exists b_1 \left(
\metanot xR_\Diamond b_1
\metaand
    \forall y_1 \left(
    \metanot b_1 (R_\Box ;_I R_\Box) y_1
    \Rightarrow
        \exists b_2 \left(
        (y_1, b_2, a) \notin 
        \Diamondblack \: ;_I (R_\circ ;_I (R_\Diamond, \Diamondblack) )
        \metaand
        b_2 R_\Box a
        \right)
    \right)
\right) 
\metaor a I x
).
\]
\end{example}

\section{More examples}
\label{sec:moreexamples}

The present section collects some examples that instantiate different signatures, and analyzes them using different semantics.

\subsection{Correspondents of generalized modal reduction principles}
\label{ssec:simpleexamples}
We discuss some examples based on various LE-languages $\langbase$, the corresponding extended language  $\langbase^\ast$ of which has parameters $\mathcal{F} = \{\circ, \Diamondblack \}$ and $\mathcal{G} = \{\Box, \backslash, \slash \}$, where the residuals of $\circ$ are $\slash$ and $\backslash$. We recall that in polarity based frames, an atom $Rxyz$ is translated as $\nomj_y \circ \nomj_z \leq \cnomm_x$ (cf.\ Section \ref{ssec:polarity_based_semantics}), while in residuated frames it is translated as $\nomj_x \leq \nomj_y \circ \nomj_z$ (cf.\ \cite{residuatedframes}).

\paragraph{Exchange, weakening, and contraction.} Let $\langbase$ be s.t.~$\mathcal{F} = \{\circ\}$ and $\mathcal{G} = \{\Box\}$.
In the (distributive) setting of the  frames in \cite{fussner2019residuation} dual to perfect residuated algebras \cite{GEHRKE20162711}, consider the following first order formulas.\footnote{Residuated algebras are bounded distributive lattices with an operation $\circ$ which has both residuals. By Priestley duality, natural dual ordered relational structures arise, where an inequality $\nomj \leq \nomi$ is interpreted as $x_i \leq x_j$, and $\nomj \leq \nomi \circ \nomh$ as $R_\circ x_j x_i x_h$.}
\smallskip

{{\centering 
\begin{tabular}{rl}
& $R_\circ xab \Rightarrow R_\circ xba$ \\
i.e.\ & $\nomj_x \leq \nomj_a \circ \nomj_b \Rightarrow \nomj_x \leq \nomj_b \circ \nomj_a$ \\
iff & $\nomj_a \circ \nomj_b \leq \nomj_b \circ \nomj_a$ \\
iff & $a \circ b \leq b \circ a$.
\end{tabular}
\begin{tabular}{rl}
& $R_\circ xab \Rightarrow a \leq x$ \\
i.e. & $\nomj_x \leq \nomj_a \circ \nomj_b \Rightarrow \nomj_x \leq \nomj_a$ \\
iff & $\nomj \circ \nomi \leq \nomj$ \\
iff & $a \circ b \leq a$.
\end{tabular}
\begin{tabular}{rl}
& $R_\circ xxx$ \\
i.e.\ & $\nomj_x \leq \nomj_x \circ \nomj_x $ \\
iff & $a \leq a \circ a$.\\
& $\quad$\\
\end{tabular}
\par}}
\smallskip

Similarly, in (non-necessarily distributive) polarity-based frames (cf.\ Section \ref{ssec:polarity_based_semantics}),
\smallskip

{{\centering 
\begin{tabular}{rl}
& $R_\circ xba \Rightarrow R_\circ xab$ \\
i.e.\ & $\nomj_b \circ \nomj_a \leq \cnomm_x \Rightarrow \nomj_a \circ \nomj_b \leq \cnomm_x$ \\
iff & $\nomj_a \circ \nomj_b \leq \nomj_b \circ \nomj_a$ \\
iff & $a \circ b \leq b \circ a$.
\end{tabular}
\begin{tabular}{rl}
& $Iax \Rightarrow R_\circ xab$ \\
i.e.\ & $\nomj_a \leq \cnomm_x \Rightarrow  \nomj_a\circ \nomj_b \leq \cnomm_x $ \\
iff & $\nomj_a \circ \nomj_b \leq \nomj_a$ \\
iff & $a \circ b \leq a $.
\end{tabular}
\begin{tabular}{rl}
& $Rxaa \Rightarrow Iax$ \\
i.e.\ & $\nomj_x \circ \nomj_a \leq \cnomm_x \Rightarrow \nomj_a \leq \cnomm_x$ \\
iff & $\nomj_a \leq \nomj_a \circ \nomj_a$\\
iff & $a \leq a \circ a$.
\end{tabular}
\par}}

\paragraph{Associativity.}
For the same language and settings as above, consider the following first order formula in the dual structures of residuated algebras:
\smallskip

{{\centering 
\begin{tabular}{rl}
& $R_\circ xad \metaand R_\circ dbc \Rightarrow \exists e(R_\circ xec \metaand R_\circ eab)$ \\
i.e.\ & $\nomj_x\leq \nomj_a \circ \nomj_d \metaand \nomj_d\leq \nomj_b\circ \nomj_c \Rightarrow \exists e(\nomj_x\leq \nomj_e\circ \nomj_c \metaand \nomj_e\leq \nomj_a\circ \nomj_b)$,
\end{tabular}
\par}}
\smallskip

\noindent which, by complete join-primeness  (cf.\ Section \ref{sec:starting_example} for a similar argument) is equivalent to
\[
\nomj_a \circ (\nomj_b\circ \nomj_c) \leq (\nomj_a\circ \nomj_b)\circ \nomj_c,
\]
that, by Lemma \ref{lemma:vss_substitute_nomcnom_var}, can be rewritten as $(a \circ b) \circ c \leq (a \circ b) \circ c$.
This axiom is an example of a {\em generalized modal reduction principle}, since it does not contain any occurrence of $\wedge$ and $\vee$, and at least some variable does not occurs uniformly in it. In \cite{dairapaper}, a connection has been established between the first order correspondents in polarity based frames and in Kripke frames of LE-inductive modal reduction principles with unary connectives. Specifically, the first order correspondent of any given inductive modal reduction principle in each type of semantic structures can be expressed as an inclusion of relations, and the one on polarity-based frames is the {\em lifted} version of the one on Kripke frames (cf.~\cite[Definition 4.17, Theorem 4.20]{dairapaper}). The first order axiom above can be rewritten as 
\begin{equation}
\label{eq:associativity_kripke}
R_\circ\, \circ ( R_\circ, \Delta) \subseteq R_\circ\, \circ (\Delta, R_\circ),
\end{equation}
where $\Delta$ is the identity relation, and, for any ternary relation $R$ and binary relations $S$ and $T$, $(R \circ (S, T)) xyz$ iff $\exists a \exists b(Rxab \metaand Say \metaand Tbz)$.
In the setting of polarity-based frames, the {\em lifting} of \eqref{eq:associativity_kripke} is the following inclusion of relations:
\begin{equation}
\label{eq:associativity_polarity}
R_\circ\, ;_I(J, R_\circ) \subseteq R_\circ\, ;_I( R_\circ, J),
\end{equation}
which can be written as a first order condition as follows:
\smallskip

{{\centering 
\begin{tabular}{rrcl}
&  $R_\circ\, ;_I(J, R_\circ)$ & $\subseteq$ & $R_\circ\, ;_I( R_\circ, J)$ \\
iff & $x \in (R_\circ\, ;_I(J, R_\circ))^{(0)}[a, b, c]$ & $\Rightarrow$ & $x \in (R_\circ\, ;_I( R_\circ, J))^{(0)}[a, b, c]$ \\
iff & $\forall d(d \in (R_\circ^{(0)}[b,c])^\downarrow \Rightarrow R_\circ xad)$ & $\Rightarrow$ & $\forall d(d \in (R_\circ^{(0)}[a,b])^\downarrow \Rightarrow R_\circ xdc)$ \\
iff & $\forall d(\forall y(R_\circ ybc \Rightarrow Idy) \Rightarrow R_\circ xad)$ & $\Rightarrow$ & $\forall d(\forall y(R_\circ yab \Rightarrow Idy)\Rightarrow R_\circ xdc)$. \\
\end{tabular}
\par}}
\smallskip

\noindent The inclusion of relations \eqref{eq:associativity_polarity} can be translated in the language of ALBA as follows: let $\varphi\coloneqq z\circ(y\circ v)$ and $\varphi'\coloneqq (z\circ y)\circ v$. Then  $R_\varphi = R_\circ\, ;_I(J, R_\circ)$ and $R_{\varphi'} = R_\circ\, ;_I( R_\circ, J)$; hence,
\smallskip

{{\centering 
\begin{tabular}{rrcl}
&  $R_\circ\, ;_I(J, R_\circ)$ & $\subseteq$ & $R_\circ\, ;_I( R_\circ, J)$ \\
iff & $x \in (R_\circ\, ;_I(J, R_\circ))^{(0)}[a, b, c]$ & $\Rightarrow$ & $x \in (R_\circ\, ;_I( R_\circ, J))^{(0)}[a, b, c]$ \\
iff & $\varphi(\nomj_a, \nomj_b, \nomj_c) \leq \cnomm_x$ & $\Rightarrow$ & $\varphi'(\nomj_a, \nomj_b, \nomj_c) \leq \cnomm_x$ \\
iff & $\nomj_a \circ (\nomj_b \circ \nomj_c) \leq \cnomm_x$ & $\Rightarrow$ & $(\nomj_a \circ \nomj_b) \circ \nomj_c \leq \cnomm_x$ \\
iff & $(\nomj_a \circ \nomj_b) \circ \nomj_c$ & $\leq $ & $\nomj_a \circ (\nomj_b \circ \nomj_c)$,
\end{tabular}
\par}}
\smallskip

\noindent which is, again, equivalent to the LE-inequality $(a \circ b) \circ c \leq (a \circ b) \circ c$.

\paragraph{Axiom K.} 
Let $\langbase$ be s.t. $\mathcal{F} = \varnothing$ and $\mathcal{G} = \{ \Box, \backslash \}$.
The inclusion of relations 
\begin{equation}
\label{eq:axiomk_incl_rel}
R_\Box \, ;_I (R_\backslash \, ;_I (R_{\Diamondblack}, I))\subseteq R_\backslash \, ;_I (J, R_\Box)
\end{equation} 
can be expressed in the first order language of polarity-based  $\langbase$-frames as follows:
\smallskip

{{\centering 
\begin{tabular}{rrcl}
& $R_\Box \, ;_I (R_\backslash \, ;_I (R_{\Diamondblack}, I))$ & $\subseteq$ & $R_\backslash \, ;_I (J, R_\Box)$ \\
iff & $a \in (R_\Box \, ;_I (R_\backslash \, ;_I (R_{\Diamondblack}, I)))^{(0)}[b, x]$ & $\Rightarrow$ & $a \in (R_\backslash \, ;_I (J, R_\Box))^{(0)}[b, x]$ \\
iff & $a \in R_\Box^{(0)}[(R_\backslash \, ;_I (R_{\Diamondblack}, I))^{(0)}[b, x]^\uparrow]$ & $\Rightarrow$ & $a \in R_\backslash^{(0)}[J^{(0)}[b]^\downarrow, R_\Box^{(0)}[x]^\uparrow]$ \\
iff & $a \in R_\Box^{(0)}[R_\backslash^{(0)}[R_{\Diamondblack}^{(0)}[b]^\downarrow, x^{\downarrow \uparrow}]^\uparrow]$ & $\Rightarrow$ & $a \in R_\backslash^{(0)}[b^{\uparrow\downarrow}, R_\Box^{(0)}[x]^\uparrow]$ \\
iff & $\forall y (y \in R_\backslash^{(0)}[R_{\Diamondblack}^{(0)}[b]^\downarrow, x^{\downarrow \uparrow}]^\uparrow \Rightarrow R_\Box ay)$ & $\Rightarrow$ & $\forall y(y \in R_\Box^{(0)}[x]^\uparrow \Rightarrow R_\backslash aby)$ \\
iff & $\forall y (\forall c ( c \in R_\backslash^{(0)}[R_{\Diamondblack}^{(0)}[b]^\downarrow, x^{\downarrow \uparrow}] \Rightarrow Icy) \Rightarrow R_\Box ay)$ & $\Rightarrow$ & $\forall y( \forall c(c \in R_\Box^{(0)}[x] \Rightarrow Icy)  \Rightarrow R_\backslash aby)$ \\
iff & $\forall y (\forall c ( 
\forall d(d \in R_{\Diamondblack}^{(0)}[b]^\downarrow \Rightarrow  R_\backslash cdx) \Rightarrow Icy) \Rightarrow R_\Box ay)$ & $\Rightarrow$ & $\forall y( \forall c(R_\Box cx \Rightarrow Icy)  \Rightarrow R_\backslash aby)$ \\
iff & $\forall y (\forall c ( 
\forall d(
\forall z (R_{\Diamondblack}zb \Rightarrow I cz)
\Rightarrow  R_\backslash cdx) \Rightarrow Icy) \Rightarrow R_\Box ay)$ & $\Rightarrow$ & $\forall y( \forall c(R_\Box cx \Rightarrow Icy)  \Rightarrow R_\backslash aby)$. \\
\end{tabular}
\par}}
\smallskip

\noindent This inclusion of relations can also be translated as a pure $\langmeta$-quasi-inequality. Indeed, let $\psi \coloneqq \Box(\Diamondblack u \backslash v)$ and $\psi' \coloneqq u \backslash \Box v$. Then, $R_\psi = R_\Box \, ;_I (R_\backslash \, ;_I (R_{\Diamondblack}, I))$, and $R_{\psi'}= R_\backslash \, ;_I (J, R_\Box)$; hence,
\smallskip

{{\centering
\begin{tabular}{rrcl}
& $R_\Box \, ;_I (R_\backslash \, ;_I (R_{\Diamondblack}, I))$ & $\subseteq$ &  $R_\backslash \, ;_I (J, R_\Box)$ \\
iff & $a \in (R_\Box \, ;_I (R_\backslash \, ;_I (R_{\Diamondblack}, I)))^{(0)}[b, x]$ & $\Rightarrow$ & $a \in (R_\backslash \, ;_I (J, R_\Box))^{(0)}[b, x]$ \\
iff & $\nomi_a \leq \Box(\Diamondblack \nomj_b \backslash \cnomn_x)$ & $\Rightarrow$ & $\nomi_a \leq \nomj_b \backslash \Box \cnomn_x$,
\end{tabular}
\par}}
\smallskip

\noindent which satisfies Definition \ref{def:crypto_inductive_inverse_corr}. By Theorem \ref{thm:invcorr}, this quasi-inequality is readily shown to be equivalent to  $\Box(\Diamondblack v_1 \backslash v_2) \leq v_1 \backslash \Box v_2$, which, as discussed in Example \ref{eg:vssaxiomk_to_inductive}, is crypto-inductive and is equivalent to
\begin{equation}
\label{eq:axiomk}
\Box(p \backslash q) \leq \Box p \backslash \Box q.
\end{equation}
This axiom is another example of a  generalized modal reduction principle, and the inclusion of relations 
 \eqref{eq:axiomk_incl_rel} is the lifted version of the following inclusion of relations on Kripke $\langbase$-frames:
\[
R_\backslash \, \circ (\Delta, R_\Box) \subseteq R_\Box \, \circ (R_\backslash \, \circ (R_{\Diamondblack}, \Delta)),
\]
where $\Delta$ is the identity relation, and, for any ternary relation $R$ and binary relations $S$ and $T$, $(R \circ (S, T)) xyz$ iff $\exists a \exists b(Rxab \metaand Say \metaand Tbz)$. The inclusion of relations above can be rewritten as follows in the language of distributive $\langbase$-frames:
\smallskip

{{\centering
\begin{tabular}{rrcl}
& $R_\backslash \, \circ (\Delta, R_\Box)$ & $\subseteq$ & $R_\Box \, \circ (R_\backslash \, \circ (R_{\Diamondblack}, \Delta))$ \\
iff  & $(R_\backslash \, \circ (\Delta, R_\Box))xyz$ & $\Rightarrow$ & $(R_\Box \, \circ (R_\backslash \, \circ (R_{\Diamondblack}, \Delta)))xyz$ \\
iff  & $\exists b (R_\backslash xyb \metaand R_\Box bz)$ & $\Rightarrow$ & $\exists c(R_\Box xc \metaand (R_\backslash \, \circ (R_{\Diamondblack}, \Delta))cyz )$ \\
iff  & $R_\backslash xyb \metaand R_\Box bz$ & $\Rightarrow$ & 
$\exists c(R_\Box xc \metaand 
    \exists d(R_\backslash cdz 
    \metaand 
    R_{\Diamondblack} dy) ).$ \\
\end{tabular}
\par}}

\noindent In the setting of Kripke frames, the first order formula above can be translated in $\langmeta$ as follows:
\[
\nomj_y \backslash \cnomm_b \leq \cnomm_x
\metaand
\Box \cnomm_z \leq \cnomm_b
\Rightarrow
\exists \cnomm_c \exists \nomj_d(\Box \cnomm_c \leq \cnomm_x \metaand \nomj_d \backslash \cnomm_z \leq \cnomm_c \metaand \nomj_d \leq \Diamondblack \nomj_y ).
\]
By the complete join (resp.\ meet) primeness of the completely join (resp.\ meet) irreducibles in the distributive setting, the quasi-inequality above can be rewritten as follows (see Section \ref{sec:starting_example} and \cite[Lemma 44]{inversedle}):

{{\centering 
\begin{tabular}{rl}
& $
\nomj_y \backslash \cnomm_b \leq \cnomm_x
\metaand
\Box \cnomm_z \leq \cnomm_b
\Rightarrow
\exists \cnomm_c \exists \nomj_d(\Box \cnomm_c \leq \cnomm_x \metaand \nomj_d \backslash \cnomm_z \leq \cnomm_c \metaand \nomj_d \leq \Diamondblack \nomj_y )$\\ 
iff & $
\nomj_y \backslash \Box \cnomm_z  \leq \cnomm_x
\Rightarrow
\exists \cnomm_c \exists \nomj_d(\Box \cnomm_c \leq \cnomm_x \metaand \nomj_d \backslash \cnomm_z \leq \cnomm_c \metaand \nomj_d \leq \Diamondblack \nomj_y )$ \\
iff & $
\nomj_y \backslash \Box \cnomm_z  \leq \cnomm_x
\Rightarrow
\exists \cnomm_c(\Box \cnomm_c \leq \cnomm_x \metaand \Diamondblack \nomj_y  \backslash \cnomm_z \leq \cnomm_c)$ \\
iff & $
\nomj_y \backslash \Box \cnomm_z  \leq \cnomm_x
\Rightarrow
\Box (\Diamondblack \nomj_y  \backslash \cnomm_z) \leq \cnomm_x $ \\
iff & $
\Box (\Diamondblack \nomj_y  \backslash \cnomm_z) \leq \nomj_y \backslash \Box \cnomm_z$.
\end{tabular}
\par}}

\noindent  The pure inequality above is identical to the one obtained in the polarity-based setting, and as discussed in Example \ref{eg:vssaxiomk_to_inductive}, it is equivalent to \eqref{eq:axiomk}.

\subsection{Dunn and Fischer Servi axioms}

Consider the distributive setting of intuitionistic modal logic, where $\mathcal{F} = \{\Diamond, \wedge\}$ and $\mathcal{G} = \{\Box, \rightarrow, \vee\}$, and the non-distributive setting of substructural modal logic, where $\mathcal{F} = \{\Diamond\}$ and $\mathcal{G} = \{\Box, \backslash \}$. Relational frames are partially ordered sets $X = (W, \leq)$ endowed with binary relations $R_\Box$ and $R_\Diamond$ such that ${\leq \circ {R_\Box} \circ \leq} \subseteq R_\Box$ and ${\geq \circ {R_\Diamond} \circ \geq} \subseteq R_\Diamond$. In this semantics, nominals (resp.\ conominals) correspond to principal up-sets (resp.\ complements of principal down-sets); therefore, in what follows, we will index nominals and conominals with  individual variables of the frame correspondence language to facilitate the translation. Also, recall that, in perfect  distributive algebras, completely join-irreducibles and completely meet irreducible elements are order isomorphic to each other via the assignments $\nomj\mapsto \bigvee \{\nomi\mid \nomj\nleq \nomi\} $ and $\cnomm\mapsto \bigwedge \{\cnomn\mid \cnomn\nleq \cnomm\} $. We will assign the same index to nominals and conominals corresponding to each other via these assignments. With this notation in place, pure inequalities can be translated to first order formulas as follows:
\smallskip

{{\centering
\begin{tabular}{rl}
& $\nomj_x \leq \nomj_y$ \\
i.e. & ${\uparrow}x \subseteq {\uparrow}y$ \\
iff & $y \leq x$
\end{tabular}
\begin{tabular}{rl}
& $\nomj_x \leq \Diamond \nomj_y$ \\
i.e. & ${\uparrow}x \subseteq R_\Diamond^{-1}[{\uparrow}y]$ \\
iff & $R_\Diamond xy$
\end{tabular}
\begin{tabular}{rl}
& $\Box \cnomm_x \leq \cnomm_y$ \\
i.e. & ${\downarrow} y \subseteq R_\Box^{-1}[{\downarrow}x]$ \\
iff & $R_\Box yx$
\end{tabular}
\begin{tabular}{rl}
     & $\nomj_x \leq \Diamondblack \nomj_y $\\
iff & $R_{\Diamondblack} xy$ \\
iff & $R_\Box yx$.
\end{tabular}
\par}}
\smallskip

\noindent Consider now the following first order formula (where all variables are universally quantified)
\smallskip

{{\centering
\begin{tabular}{rl}
     &  $ R_\Diamond zx  \metaand y \leq w \metaand z \leq w \Rightarrow \exists t(R_\Diamond wt \metaand x \leq t \metaand R_\Box yt)$ \\
i.e. & $\nomj_z \leq \Diamond \nomj_x  \metaand \nomj_w \leq \nomj_y \metaand \nomj_w \leq \nomj_z \Rightarrow \exists \nomj_t(\nomj_w \leq \Diamond \nomj_t \metaand \nomj_t \leq \nomj_x \metaand \nomj_t \leq \Diamondblack \nomj_y)$ \\
iff & $\nomj_z \leq \Diamond \nomj_x  \metaand \nomj_w \leq \nomj_y\wedge \nomj_z \Rightarrow \exists \nomj_t(\nomj_w \leq \Diamond \nomj_t \metaand \nomj_t \leq \nomj_x \wedge \Diamondblack \nomj_y)$ \\
\end{tabular}
\par}}
\smallskip

\noindent by complete join primeness of the completely join-irreducibles, it is equivalent to
\begin{equation}
\label{eq:fischerservi_halfprocessed}
\nomj_w \leq \nomj_y\wedge \Diamond \nomj_x \Rightarrow \nomj_w \leq \Diamond (\nomj_x \wedge \Diamondblack \nomj_y).
\end{equation}
By residuation and the connection between nominals and conominals in the distributive setting, 
\[
\nomj_w \leq \nomj_y \wedge \Diamond \nomj_x
\quad \mbox{iff} \quad
\nomj_y \wedge \Diamond \nomj_x \nleq \cnomm_w
\quad \mbox{iff} \quad
\nomj_y \nleq \Diamond \nomj_x \rightarrow \cnomm_w
\quad \mbox{iff} \quad
\Diamond \nomj_x \rightarrow \cnomm_w \leq \cnomm_y.
\]
Hence, \eqref{eq:fischerservi_halfprocessed} is equivalent to the following quasi inequality:
\[
\Diamond \nomj_x \rightarrow \cnomm_w \leq \cnomm_y\Rightarrow \nomj_w \leq \Diamond (\nomj_x \wedge \Diamondblack \nomj_y),
\]
which can be rewritten as the following one, which satisfies Definition \ref{def:crypto_inductive_inverse_corr}.
\begin{equation}
\label{eq:fischerservi_crypto_inverse_corr}
\Diamond (\nomj_x \wedge \Diamondblack \nomj_y) \leq \cnomm_w
\Rightarrow 
\nomj_y \leq \Diamond \nomj_x \rightarrow \cnomm_w
\end{equation}
Hence, by Theorem \ref{thm:cryptoinvcorr}, it is equivalent to a crypto-inductive $\langbase$-inequality computed as follows:
\smallskip

{{\centering
\begin{tabular}{rl}
     & $\Diamond (\nomj_x \wedge \Diamondblack \nomj_y) \leq \cnomm_w
\Rightarrow 
\nomj_y \leq \Diamond \nomj_x \rightarrow \cnomm_w$ \\
iff & $\nomj_x \leq v_x \metaand \nomj_y \metaand \Diamond (v_x \wedge \Diamondblack v_y) \leq \cnomm_w
\Rightarrow \nomj_y \leq \Diamond \nomj_x \rightarrow \cnomm_w$ \\
iff & $v_y \leq \Diamond v_x \rightarrow \Diamond (v_x \wedge \Diamondblack v_y)$
\end{tabular}
\par}}
\smallskip

\noindent The last inequality is  crypto $\varepsilon$-inductive for $\varepsilon(v_y) = 1 = \varepsilon(v_x) = 1$. The $\mathsf{MVTree}$ of the non-critical occurrence of $v_x$ contains only $v_x$, while that of the non-critical occurrence of $v_y$ contains $\Diamondblack v_y$. Hence, Proposition \ref{prop:cryptotoinductive} yields
\smallskip

{{\centering 
\begin{tabular}{rl}
     & $v_y \leq \Diamond v_x \rightarrow \Diamond (v_x \wedge \Diamondblack v_y)$ \\
iff & $v_x \leq p \metaand \Diamondblack v_y \leq q \Rightarrow v_y \leq \Diamond v_x \rightarrow \Diamond (p \wedge q)$\\
iff & $v_x \leq p \metaand v_y \leq \Box q \Rightarrow v_y \leq \Diamond v_x \rightarrow \Diamond (p \wedge q)$\\
iff & $\Box q \leq \Diamond p \rightarrow \Diamond (p \wedge q)$.\\
iff & $\Diamond p\wedge \Box q \leq  \Diamond (p \wedge q)$.
\end{tabular}
\par}}
\smallskip

\noindent If we do not consider $\wedge$ to be part of the language, is also crypto-inductive for $\varepsilon(v_x) = \varepsilon(v_y) = 1$. The procedure in Proposition \ref{prop:cryptotoinductive} yields
\smallskip

{{\centering 
\begin{tabular}{rl}
     & $v_y \leq \Diamond v_x \rightarrow \Diamond (v_x \wedge \Diamondblack v_y)$ \\
iff & $\Diamondblack v_y \leq q \Rightarrow v_y \leq \Diamond v_x \rightarrow \Diamond (v_x \wedge q)$\\
iff & $v_x \wedge q \leq p \metaand \Diamondblack v_y \leq q \Rightarrow v_y \leq \Diamond v_x \rightarrow \Diamond p$\\
iff & $v_x \leq q \rightarrow p \metaand v_y \leq \Box q \Rightarrow v_y \leq \Diamond v_x \rightarrow \Diamond p$\\
iff & $\Box q \leq \Diamond (q \rightarrow p) \rightarrow \Diamond p$ \\
iff & $\Diamond (q \rightarrow p) \leq \Box q \rightarrow \Diamond p$. 
\end{tabular}
\par}}
\smallskip

\section{Conclusions}
\label{sec:conclusions}

\paragraph{Contributions of the present paper.} 
In the present paper, we generalize Kracht-Kikot's inverse correspondence theory from classical modal logic to arbitrary LE-logics. Just like Kracht's theory can be regarded as the converse of classical Sahlqvist theory, the present result can be understood as the converse of unified correspondence theory for LE-logics: indeed, this result uniformly applies across arbitrary LE-signatures, thanks to the fact that it is set in the algebraic environment corresponding to LE-logics, and it hinges on the order-theoretic properties of the interpretation of the logical connectives.

\paragraph{Future work.} A natural refinement of the present results concerns the setting of normal  distributive lattice expansions (normal DLEs), initiated in \cite{inversedle}. The different order-theoretic properties of this environment allow for a number of technical and methodological differences between the present setting and this refinement.
In particular, the complete primeness of the completely join- and meet-irreducible elements can be exploited in the distributive setting, as well as the fact  that the completely join- and meet-irreducibles bijectively correspond to each other. These two properties allow for a finer-grained reduction of the relational atomic constituents, and for a more straightforward translation into commonly used frame-correspondence languages.

An open question concerns the extension of the present results to logics with fixpoints. These logics have already been accounted for within unified correspondence \cite{CoGhPa14,alba_dmucalculus,conradie2017constructive}, and the frame correspondence language is a first order language with fixpoints. 

Another natural open question concerns the extension of these results to non-normal logics. These logics have also been covered by the general theory of unified correspondence \cite{palmigiano2017sahlqvist,palmigiano2020constructive,chen2022non}. A possible way to tackle this problem might be via suitable embeddings into multi-type normal environments in the style of what was done in \cite{chen2022non}.

A third natural direction concerns the  relationship of the present results with the recent theory of {\em parametric correspondence}  \cite{dairapaper}, as discussed in Section \ref{ssec:simpleexamples}. This theory establishes systematic connections between the first order correspondents of certain axioms across different semantic settings. In particular, the results in \cite{dairapaper} hinge on the possibility to define relations associated with (unary) definite skeleton formulas thanks to suitable  definitions of relational compositions (which depend on different semantic settings), and rewrite the first order correspondents of definite Sahlqvist/inductive inequalities (modal reduction principles) in terms of inclusions of such relations. In the present paper, we have followed a very similar strategy, based on the definition of relations associated to (non-necessarily unary) definite skeleton formulas (see Section \ref{ssec:kripke} and Section \ref{ssec:polarity_based_semantics}): the definition of inductive inverse correspondent serves as a connection point which allows to recognize the syntactic shape, based on such relations, of the various first order correspondents across the different semantic settings. Hence, a natural question is whether these systematic connections can be extended to the characterization of the first order correspondents of inductive inequalities.

Finally, we noticed that the syntactic shape of inductive inverse correspondents (cf.\ Definition \ref{def:inductiveinvcorr}) is a subclass of generalized geometric implications in the chosen first language. This syntactic shape has been used in structural proof theory in the context a research program aimed at generating analytic calculi for large classes of axiomatic extensions of classical normal modal logic \cite{negri2016sysrul}. In the light of this, the present result can pave the way to the extension of these proof theoretic results to DLE and LE logics. 
\bibliographystyle{plain}
\bibliography{references}
\appendix

\section{Flattening skeleton inequalities in the LE setting}
\label{sssec:flatteningskeleton}

In Remark \ref{remark:flatinequalities}, we have discussed and justified our choice to include complex skeleton formulas in the atoms of $\langmeta$, due to ease of presentation, and because they do not add more power to the usual first order correspondence language in the setting of distributive (and classical) modal logic. In the present section, we show how the flattening procedure can be carried on under certain circumstances even when the inequalities of language contain at most one connective. 

The flattening procedure can be applied as in the rest of Section \ref{ssec:flattening}, and Corollary \ref{cor:firstapprox_inductive} can be easily used to flatten inequalities $\nomj \leq \varphi$ and $\psi \leq \cnomm$, where $\varphi$ and $\psi$ are positive and negative skeleton formulas respectively. It remains to show how to flatten inequalities of shape
$\nomj \leq \varphi$ and $\psi \leq \cnomm$, where $\varphi$ and $\psi$ are negative and positive skeleton formulas respectively, which appear in antecedents of implications whose antecedent contains $\nomj$ (resp.\ $\cnomm$) only once, and whose consequent contains some inequality $\varphi'\leq\psi'$ in which $\nomj$ (resp.\ $\cnomm$) occurs exactly once, and $\varphi'$ and $\psi'$ are skeleton formulas. We show how to recursively flatten inequalities $\nomj \leq \varphi$ (and the similar case $\psi \leq \cnomm$) when $f$ and $g$ connectives do not occur under the scope of $\wedge$ and $\vee$ in $\varphi$ (resp.\ $\psi$).

If $\varphi$ is a finite meet of pure variables $\bigwedge\purev_i$, then $\nomj \leq \varphi$ iff $\bigmetaand\nomj\leq\purev_i$.

Consider now the case in which $\varphi = g(\alpha_{1},\ldots,\alpha_{n},\beta_{1},\ldots,\beta_{m})$, where $g$ is some connective in $\mathcal{G}$ whose first $n$ coordinates are positive, and whose last $m$ coordinates are negative. Since $\varphi'$ and $\psi'$ are skeleton formulas and one of them (without loss of generality $\varphi'$), by Corollary \ref{cor:skeletonadjoint}, it is possible to put $\nomj$ on display: $\varphi' \leq \psi'$ iff $\nomj \leq \mathsf{RA}(\varphi')_{\psi'}$. Hence, since $\nomj$ occurs only twice in the implication
\smallskip

{{\centering
\begin{tabular}{rlr}
& $\forall \nomj(\xi \metaand 
\nomj \leq g(\alpha_{1},\ldots,\alpha_{n},\beta_{1},\ldots,\beta_{m}))
\Rightarrow
\nomj \leq \mathsf{RA}(\varphi')_{\psi'}$ & \\
iff & $\xi \Rightarrow 
g(\alpha_{1},\ldots,\alpha_{n},\beta_{1},\ldots,\beta_{m})
\leq \mathsf{RA}(\varphi')_{\psi'}$ & $\mathbb{A}$ dense \\
iff & $\forall \cnomn(\xi \metaand \mathsf{RA}(\varphi')_{\psi'} \leq \cnomn \Rightarrow 
g(\alpha_{1},\ldots,\alpha_{n},\beta_{1},\ldots,\beta_{m})
\leq \cnomn)$ & $\mathbb{A}$ dense\\
iff & $\forall \cnomn,\overline \nomk,\overline \cnomo(\xi \metaand 
\mathsf{RA}(\varphi')_{\psi'} \leq \cnomn 
\metaand
\bigmetaand_{i=1}^n \alpha_i\leq\cnomo_i
\metaand
\bigmetaand_{i=1}^m \nomk_i \leq \beta_i
\Rightarrow 
g(\overline \cnomo, \overline\nomk)
\leq \cnomn)$ & Corollary \ref{cor:firstapprox_inductive}\\
iff & $\forall \overline \nomk,\overline \cnomo(\xi \metaand 
\bigmetaand_{i=1}^n \alpha_i\leq\cnomo_i
\metaand
\bigmetaand_{i=1}^m \nomk_i \leq \beta_i
\Rightarrow 
g(\overline \cnomo, \overline\nomk)
\leq \mathsf{RA}(\varphi')_{\psi'})$ & $\mathbb{A}$ dense\\
iff & $\forall \nomj, \overline \nomk,\overline \cnomo(\xi \metaand 
\bigmetaand_{i=1}^n \alpha_i\leq\cnomo_i
\metaand
\bigmetaand_{i=1}^m \nomk_i \leq \beta_i
\metaand
\nomj\leq g(\overline \cnomo, \overline\nomk)
\Rightarrow 
\nomj
\leq \mathsf{RA}(\varphi')_{\psi'})$ & $\mathbb{A}$ dense\\
\end{tabular}
\par}}
\smallskip

It is then possible to proceed on $\xi$ and on the newly introduced inequalities $\nomk_i\leq\beta_i$ (resp.\ $\alpha_i\leq\cnomo_i$) from the last and penultimate step of the chain of equivalences above, respectively. The same argument cannot be applied when meets and joins occur arbitrarily in $\varphi$ and $\psi$, as, whenever an inequality $\nomj \leq \varphi_1 \wedge \varphi_2$ (resp.\ $\psi_1 \vee \psi_2 \leq \cnomm$) is split into $\nomj \leq \varphi_1 \metaand \nomj \leq \varphi_2$ (resp.\ $\psi_1 \leq \cnomm \metaand \psi_2 \leq \cnomm$), there is no clear way to rewrite the first formula above, in a shape close the third one, i.e., there is no clear way in which one can rewrite the formula by substituting $\nomj$ (resp.\ $\cnomm$) with a conominal (resp.\ nominal) and making the inequalities in which they occur switch sides of the meta-implication. Example \ref{eg:flattenalsonegskeright} shows one case in which meets and joins make any rewriting of the formula non-obvious.

\begin{example}
\label{eg:flattification:alsoinsideskeleton}
For instance, consider the inequality $\nomj_1 \leq (\Box\Box \nomj_2 \circ \nomj_3) \circfor \Box \Diamond {\rhd} \nomj_2$ which is equivalent to
\[
\forall \cnomo_1, \nomh_1, \nomh_2
(
\nomh_1 \leq \Box\Box \nomj_2 
\metaand
\nomh_2 \leq \nomj_3
\metaand
\forall \nomh_3(\nomh_3 \leq {\rhd} \nomj_2 \Rightarrow \Diamond\nomh_3 \leq \cnomo_1)
\Rightarrow
\nomj_1 \leq (\nomh_1 \circ \nomh_2) \circfor \cnomo_1
),
\]
as shown in Example \ref{eg:flattification1}. If $\nomh_1 \leq \Box\Box \nomj_2$ is not considered an atom (predicate on $\nomh_1$ and $\nomj_2$), then it needs to be further flattified as explained above: the first step is residuate the inequality on the right of the meta-implication to put $\nomh_1$ on display. Therefore,  $\nomj_1 \leq (\nomh_1 \circ \nomh_2) \circfor \cnomo_1$ is equivalently rewritten as $\nomh_1 \circ \nomh_2 \leq \nomj_1 \circfor \cnomo_1$, which is equivalent to $\nomh_1 \leq \nomh_2 \circfor (\nomj_1 \circfor \cnomo_1)$, by residuation. Thus, the starting inequality can be rewritten as
\smallskip

{{\centering
\begin{tabular}{rl}
& $\forall \cnomo_1, \nomh_1, \nomh_2
(
\nomh_1 \leq \Box\Box \nomj_2 
\metaand
\nomh_2 \leq \nomj_3
\metaand
\forall \nomh_3(\nomh_3 \leq {\rhd} \nomj_2 \Rightarrow \Diamond\nomh_3 \leq \cnomo_1)
\Rightarrow
\nomh_1 \leq \nomh_2 \circfor (\nomj_1 \circfor \cnomo_1)
)$ \\
& $\forall \cnomo_1, \cnomo_2, \nomh_2
(
\nomh_2 \circfor (\nomj_1 \circfor \cnomo_1) \leq \cnomo_2
\metaand
\nomh_2 \leq \nomj_3
\metaand
\forall \nomh_3(\nomh_3 \leq {\rhd} \nomj_2 \Rightarrow \Diamond\nomh_3 \leq \cnomo_1)
\Rightarrow
\Box\Box \nomj_2 \leq \cnomo_2
)$
\end{tabular}
\par}} 
\end{example}

\begin{example}
\label{eg:flattenalsonegskeright}
The inequality $((\Diamond p \wedge {\lhd} \Box {\rhd} q) \circ p) \circfor {\rhd} p \leq  {\rhd}(p \circ q)$ in the language of the Lambek-Grishin calculus enriched with unary modalities is $(\Omega,\varepsilon)$-inductive for
$\varepsilon(p) = 1$ and $\varepsilon(q) = 1$, and any $\Omega$. Corollary \ref{cor:firstapprox_inductive} yields that it is equivalent to
\[
\begin{array}{rl}
& \forall \nomj_1,\nomj_2,\nomj_3
\left[
\nomj_1 \leq ((\Diamond p \wedge {\lhd} \Box {\rhd} q) \circ p) \circfor {\rhd} p
\metaand
\nomj_2 \leq p
\metaand 
\nomj_3 \leq q
\Rightarrow
\nomj_1 \leq {\rhd}(\nomj_2 \circ \nomj_3)
\right],
\end{array}
\]
and by Lemma \ref{lemma:ackermann}, its ALBA output is
\[
\forall \nomj_1,\nomj_2,\nomj_3
\left[
\nomj_1 \leq ((\Diamond \nomj_2 \wedge {\lhd} \Box {\rhd} \nomj_3) \circ \nomj_2) \circfor {\rhd} \nomj_2
\Rightarrow
\nomj_1 \leq {\rhd}(\nomj_2 \circ \nomj_3)
\right].
\]
The usual flattification process on $\nomj_1 \leq ((\Diamond \nomj_2 \wedge {\lhd} \Box {\rhd} \nomj_3) \circ \nomj_2) \circfor {\rhd} \nomj_2$ yields
\smallskip

{{\centering
\begin{tabular}{rl}
& $\nomj_1 \leq ((\Diamond \nomj_2 \wedge {\lhd} \Box {\rhd} \nomj_3) \circ \nomj_2) \circfor {\rhd} \nomj_2$ \\
iff & $\forall \cnomo_1,\nomh_1({\rhd} \nomj_2 \leq \cnomo_1 \metaand 
\nomh_1 \leq \Diamond \nomj_2 \wedge {\lhd} \Box {\rhd} \nomj_3
\Rightarrow \nomj_1 \leq (\nomh_1 \circ \nomj_2)\circfor \cnomo_1)$ \\
iff & $\forall \cnomo_1,\nomh_1({\rhd} \nomj_2 \leq \cnomo_1 \metaand 
\nomh_1 \leq \Diamond \nomj_2 \metaand \nomh_1 \leq {\lhd} \Box {\rhd} \nomj_3
\Rightarrow \nomj_1 \leq (\nomh_1 \circ \nomj_2)\circfor \cnomo_1)$ \\
\end{tabular}
\par}}

If $\nomh_1 \leq {\lhd} \Box {\rhd} \nomj_3$ is not considered an atom (predicate on $\nomj_3$ and $\cnomo_1$), then it needs to be further flattified as explained above: the first step is residuate the inequality on the right of the meta-implication to put $\nomh_1$ on display. Therefore,  $\nomj_1 \leq (\nomh_1 \circ \nomj_2) \circfor \cnomo_1$ is equivalently rewritten as $\nomh_1 \circ \nomj_2 \leq \nomj_1 \circfor \cnomo_1$, which is equivalent to $\nomh_1 \leq \nomj_2 \circfor (\nomj_1 \circfor \cnomo_1)$, by residuation. Thus, $\nomj_1 \leq ((\Diamond \nomj_2 \wedge {\lhd} \Box {\rhd} \nomj_3) \circ \nomj_2) \circfor {\rhd} \nomj_2$ can be rewritten as
\smallskip

{{\centering
\begin{tabular}{rl}
&  $\forall \cnomo_1,\nomh_1({\rhd} \nomj_2 \leq \cnomo_1 \metaand 
\nomh_1 \leq \Diamond \nomj_2 \metaand \nomh_1 \leq {\lhd} \Box {\rhd} \nomj_3
\Rightarrow \nomh_1 \leq \nomj_2 \circfor (\nomj_1 \circfor \cnomo_1))$.
\end{tabular}
\par}}
\smallskip

\noindent There is no clear way in which it is possible to make the inequality $\nomh_1 \leq \nomj_2 \circfor (\nomj_1 \circfor \cnomo_1)$ switch position with ${\rhd} \nomj_2 \leq \cnomo_1 \metaand 
\nomh_1 \leq \Diamond \nomj_2 \metaand \nomh_1 \leq {\lhd} \Box {\rhd} \nomj_3$, unless the two inequalities in the antecedent are merged again.
\end{example}

\end{document}